\newtheorem{thm}{Theorem}[section]
\newtheorem{cor}[thm]{Corollary}
\newtheorem{lem}[thm]{Lemma}
\newtheorem{prop}[thm]{Proposition}
\theoremstyle{definition}
\newtheorem{defn}[thm]{Definition}
\theoremstyle{remark}
\newtheorem{rem}[thm]{Remark}
\numberwithin{equation}{section}
\newcommand{\be}{\begin{equation}}
\newcommand{\ee}{\end{equation}}
\newcommand{\R}{\mathbb R}
\newcommand{\eps}{\epsilon}
\newcommand{\p}{\partial}
\newcommand{\comment}[1]{}
\begin{document}

\title[Higher regularity]{
Regularity of higher order in two-phase
free boundary problems}
\author{Daniela  De Silva}
\address{Department of Mathematics, Barnard College, Columbia University, New York, NY 10027}
\email{\tt  desilva@math.columbia.edu}
\author{Fausto Ferrari}
\address{Dipartimento di Matematica dell'Universit\`a di Bologna, Piazza di Porta S. Donato, 5, 40126 Bologna, Italy.}
\email{\tt fausto.ferrari@unibo.it}
\author{Sandro Salsa}
\address{Dipartimento di Matematica del Politecnico di Milano, Piazza Leonardo da Vinci, 32, 20133 Milano, Italy.}
\email{\tt sandro.salsa@polimi.it }
\begin{abstract}We develop further our strategy in \cite{DFS} to show that flat or Lipschitz free boundaries of two-phase problems
with forcing terms are locally $C^{2,\gamma }.$ \end{abstract}


\maketitle

\section{Introduction}\label{introduzione}

Let $f_{\pm }\in C^{0,\gamma }(B_{1}),$ where $B_1$ denotes the unit ball in 
$\mathbb{R}^n,$ $n\geq 2,$ centered at $0$, and consider the two-phase
problem 
\begin{equation}
\left\{ 
\begin{array}{ll}
\Delta u=f_+ & \hbox{in $B_1^+(u),$} \\ 
\  & \  \\ 
\Delta u=f_{-} & \hbox{in $B_1^-(u),$} \\ 
\  & \  \\ 
u_{\nu }^{+}=G(u_{\nu }^{-}) & 
\hbox{on $F(u):= \partial
B^+_1(u) \cap B_1.$} \\ 
& 
\end{array}%
\right.  \label{fbtp}
\end{equation}%
Here 
\begin{equation*}
B_{1}^{+}(u):=\{x\in B_{1}:u(x)>0\},\quad B_{1}^{-}(u):=\{x\in
B_{1}:u(x)\leq 0\}^{\circ },
\end{equation*}%
while $u_{\nu }^{+}$ and $u_{\nu }^{-}$ denote the normal derivatives in the
inward direction to $B_{1}^{+}(u)$ and $B_{1}^{-}(u)$ respectively. The
function $G:\mathbb{[}0,\infty )\rightarrow \mathbb{R}^{+}$ satisfies the
usual ellipticity assumption: 
\begin{equation}  \label{G_e}
\begin{split}
G\quad & \mbox{is strictly increasing,}\:\: \:\: G(0)>0,\:\: \mbox{and}%
\:\:G(b)\rightarrow \infty\:\:\mbox{as}\:\: b\rightarrow \infty .
\end{split}%
\end{equation}
For simplicity, we assume that $G\in C^{2}(\mathbb{[}0,\infty ))$ and say $%
G(0)=1.$

Typical examples of inhomogeneous two-phase problems are the
Prandtl-Bachelor model in fluid-dynamics (see e.g. \cite{B1,EM}), or the
eigenvalue problem in magneto-hydrodynamics considered in \cite{FL}. Other
examples come from limits of singular perturbation problems with forcing
term as in \cite{LW}, where the authors analyze solutions to \eqref{fbtp},
arising in the study of flame propagation with nonlocal effects. \smallskip

Our main result gives $C^{2,\gamma ^{\ast }}$ regularity of flat free
boundaries.
Precisely, we prove the following theorem, where we call universal any constant depending on $n,\gamma ,L:= Lip(u),\Vert f_{\pm }\Vert
_{C^{0,\gamma }(B_{1})},$ and $\Vert G\Vert _{C^{2}([0,L+1])}$.

\begin{thm}
\label{flatmain1} Let $u$ be a (Lipschitz) viscosity solution to \eqref{fbtp}
in $B_{1}$. There exists a universal constant $\bar{\eta}>0$ such that, if 
\begin{equation}
\{x_{n}\leq -\eta \}\subset B_{1}\cap \{u^{+}(x)=0\}\subset \{x_{n}\leq \eta
\},\quad \text{for $0\leq \eta \leq \bar{\eta},$ }  \label{flat}
\end{equation}%
then $F(u)$ is $C^{2,\gamma ^{\ast }}$ in $B_{1/2}$ for a small $\gamma
^{\ast }$ universal, with the $C^{2,\gamma ^{\ast }}$ norm bounded by a
universal constant. 
\end{thm}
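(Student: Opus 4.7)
The plan is to upgrade the $C^{1,\gamma}$ regularity of $F(u)$ already established in \cite{DFS} to $C^{2,\gamma^*}$ via an iterated \emph{quadratic improvement of approximation} scheme, in the spirit of the De Silva linearization technique. First I would invoke the result from \cite{DFS} to reduce to a situation where, after translation, rotation, and rescaling, the origin lies on $F(u)$ with tangent hyperplane $\{x_n=0\}$ and $u$ is very close to the two-plane solution $U_\beta(x)=\alpha x_n^+-\beta x_n^-$, where $\alpha=G(\beta)$. The goal is then to track the next-order correction.

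Next I would formulate a \textbf{quadratic improvement lemma}: there exist universal $\rho\in(0,1)$ and $\gamma^*\in(0,\gamma)$ such that, if $u$ is $\epsilon$-close (in suitable rescaled units) to $U_\beta+Q$ in $B_1$, where $Q$ is a pair of quadratic polynomials (one per phase) satisfying the compatibility $\Delta Q=f_\pm(0)$ and a linearized version of the free boundary condition, then one can find a new admissible quadratic pair $Q'$ with $|Q'-Q|\le C\epsilon$ such that $u$ is $\epsilon\rho^{1+\gamma^*}$-close to $U_\beta+Q'$ in $B_\rho$. Iterating this step at scales $\rho^k$ produces a Cauchy sequence of quadratic polynomials whose limit describes $F(u)$ as a $C^{2,\gamma^*}$ graph.

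The improvement lemma itself is proved by compactness and contradiction. Assuming failure, one obtains a sequence $u_k$ with flatness $\epsilon_k\to 0$; dividing by $\epsilon_k$ and using Harnack-type estimates inherited from the $C^{1,\gamma}$ theory, the rescaled increments $\tilde u_k=(u_k-U_\beta-Q_k)/\epsilon_k$ converge locally uniformly to a limit $\tilde u_\infty$ which solves the linearized two-phase transmission problem
\begin{equation*}
\Delta\tilde u_\infty=0 \text{ in } B_{1/2}\cap\{x_n\neq 0\},\qquad
\alpha\,(\tilde u_\infty)_n^+ - G'(\beta)\,\beta\,(\tilde u_\infty)_n^- = 0 \text{ on } \{x_n=0\}.
\end{equation*}
This linear problem enjoys classical $C^{2,\alpha}$ estimates at $\{x_n=0\}$, so $\tilde u_\infty$ admits a quadratic Taylor polynomial at the origin; lifting this polynomial back through the rescaling provides the candidate $Q'$ that contradicts the failure of the improvement step for $k$ large.

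The main obstacle is the careful treatment of the inhomogeneities $f_\pm$ at second order. Unlike the $C^{1,\gamma}$ step, the forcing terms cannot be neglected under quadratic rescaling: their point values at the base point must enter $Q$ via the compatibility $\Delta Q=f_\pm(0)$, while their $C^{0,\gamma}$ oscillation contributes an error of order $r^{2+\gamma}$ at scale $r$. This is precisely what forces $\gamma^*<\gamma$ and dictates the scaling exponent in the improvement lemma. A second delicate point is verifying in a viscosity sense that the limit $\tilde u_\infty$ inherits the linearized transmission condition: one must pass to the limit in the nonlinear condition $u_\nu^+=G(u_\nu^-)$ across a sequence of free boundaries $F(u_k)$ with diverging second-order data, which requires uniform control of the normal derivatives on both sides, again provided by the $C^{1,\gamma}$ theory from \cite{DFS}. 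The remaining steps (iteration, dependence on parameters, and propagation to a neighborhood of every free boundary point) are routine once these two ingredients are in hand.
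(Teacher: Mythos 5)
Your outline correctly reproduces the non-degenerate branch of the argument: when the negative slope $\beta$ is genuinely positive (and comparable to the scale), the paper's Proposition \ref{IF2tp} is exactly your quadratic improvement lemma, proved by the same compactness/linearization to the transmission problem $\tilde\alpha w_n^+-\tilde\beta G_0'(\tilde\beta)w_n^-=0$, and iterating it gives pointwise $C^{2,\gamma^*}$ regularity. But there is a genuine gap: you treat $\beta$ as a fixed positive parameter throughout, and this misses the degenerate case, which is where the real difficulty of the theorem lies. Since $f_-$ has general sign, there is no Hopf-type nondegeneracy on the negative side: the initial normalization (Lemma \ref{first_step_1}) may only yield that $u^-$ is close to the purely quadratic profile $-\tfrac12 f_-(0)x_n^2$, i.e.\ $\beta=0$ or $\beta$ small compared to the flatness $\epsilon$. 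In that regime your rescaled increment $(u_k-U_\beta-Q_k)/\epsilon_k$ on the negative side is not well normalized (the natural normalization divides by $\beta_k$, which vanishes or degenerates), the transmission condition collapses to a one-phase Neumann-type condition $u_\nu^+\approx\alpha$, and your limiting problem is simply the wrong one. The paper must instead run a separate one-phase improvement for $u^+$ (Proposition \ref{IF}), justified by Lemma \ref{fbcondition} which converts closeness of $u^-$ to a quadratic profile into the approximate free boundary condition $|u_\nu^+-\alpha|\le\bar\delta^{1/2}\epsilon$, and then track a second dichotomy: either the slope of the negative approximation stays zero (iterate), or it becomes a small nonzero $p$, in which case $u^+$ only improves at a $C^2$ (not $C^{2,\gamma}$) rate for a while (Propositions \ref{first_step} and \ref{it}), and one must wait until a universally small scale before the two pieces of information can be recombined into a genuine two-phase flatness (Proposition \ref{together}).

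A secondary inaccuracy: the loss from $\gamma$ to $\gamma^*$ is not caused by the H\"older oscillation of $f_\pm$ under quadratic rescaling, as you assert. The oscillation $\le\bar\delta|x|^\gamma$ is exactly compatible with improvement at rate $2+\gamma$, and indeed the two-phase and one-phase improvement propositions both achieve the full exponent $\gamma$. The strict inequality $\gamma^*<\gamma^2/(1+2\gamma)$ arises only in the degenerate branch, from the scale-matching constraints \eqref{r1}--\eqref{r3} in Proposition \ref{together} needed to reconcile the $C^{2,\gamma}$ approximation of $u^-$ with the merely intermediate $C^2$ improvement of $u^+$. Without the degenerate machinery your scheme cannot even be started when $f_-(0)<0$ and $u^-$ has vanishing gradient on $F(u)$, so the proposal as written does not prove the theorem.
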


In view of Theorem 1.3 in \cite{DFS}, if \eqref{flat} holds then the free
boundary $F(u)$ is locally $C^{1,\bar{\gamma}}$. Note that the same
conclusion holds if $F\left( u\right) $ is a graph of a Lipschitz function
(see Theorem 1.4 in \cite{DFS}). Therefore, throughout the paper we will
assume that $F(u)$ is $C^{1,\bar{\gamma}}$ and hence $u$ is a classical
solution, i.e. the free boundary condition is satisfied in a pointwise
sense. To fix ideas, let us say that $0<\gamma \leq \bar{\gamma}$.\smallskip

Our result extends without much effort to more general linear uniformly
elliptic equations with $C^{0,\gamma }$
coefficients and to more general free boundary jump conditions $u_{\nu
}^{+}=G(u_{\nu }^{-},\nu ,x)$, where $G$ is $C^{2}$ with respect to all its arguments. For those operators, also
considering the existence paper \cite{DFS2}, the theory of viscosity
solutions to inhomogeneous free boundary problems has reached a considerable
level of completeness. Perhaps, the only relevant open question remains the
analysis of singular (\textquotedblleft nonflat\textquotedblright ) points.

For fully nonlinear operators, we proved in \cite{DFS3} that for a fairly
general class of problems (with right-hand side), Lipschitz viscosity solutions with Lipschitz or
flat (in the sense of \eqref{flat}) free boundaries are indeed classical.
The questions of Lipschitz continuity of solutions and higher regularity of
the free boundary remain open problems.\smallskip

In order to explain the significance of our main theorem, we describe here the state of the art
about the higher regularity theory for two-phase free boundary problems. In the seminal paper \cite{KNS},
the authors use a zero order hodograph transformation and a suitable reflection
map, to locally reduce a two-phase problem to an
elliptic and coercive system of nonlinear equations (see Appendix B).
The existing literature on the regularity of solutions to nonlinear systems developed in 
\cite{ADN, Morrey} can be applied as long as the solution $u$ is $C^{2,\alpha}$ (for some $\alpha>0$) up to the free boundary (from either side). Hence, the following corollary of Theorem \ref{flatmain1} holds.

\begin{cor}
\label{corcor} Let $k$ be a nonnegative integer. Assume that $f_{\pm }\in
C^{k,\gamma }\left( B_{1}\right) $ and $G$ is $C^{2+k}$. Then $F\left(
u\right) \cap B_{1/2}$ is $C^{k+2,\gamma ^{\ast }}.$ If $f_{\pm }$ are $%
C^{\infty }$ or real analytic in $B_{1}$, then $F\left( u\right) \cap
B_{1/2} $ is $C^{\infty }$ or real analytic, respectively.
\end{cor}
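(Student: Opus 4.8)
The plan is to bootstrap, starting from the conclusion of Theorem \ref{flatmain1}: we already know that $F(u)\cap B_{1/2}$ is $C^{2,\gamma^{\ast}}$, hence $u$ is a classical solution and, by elliptic regularity for the Dirichlet/Neumann problems in each phase, $u$ is $C^{2,\gamma^{\ast}}$ up to $F(u)$ from either side (using $f_{\pm}\in C^{0,\gamma}$ together with the $C^{2,\gamma^{\ast}}$ regularity of the boundary). This is precisely the threshold regularity needed to invoke the Kinderlehrer--Nirenberg--Spruck machinery described in the introduction: one applies the zeroth-order hodograph transform and the reflection map (recalled in Appendix B) to flatten $F(u)$ and glue the two phases, producing an elliptic and coercive nonlinear system $\mathcal{N}(w)=0$ of Agmon--Douglis--Nirenberg type for the transformed unknown $w$. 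The coefficients of this system depend on $f_{\pm}$ and on $G$ (and their derivatives), so they inherit whatever Hölder regularity $f_{\pm}$ and $G$ have, shifted by the order of differentiation appearing in the system.

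First I would set up the base case $k=0$: with $u\in C^{2,\gamma^{\ast}}$ up to the free boundary, the transformed solution $w$ lies in $C^{2,\gamma^{\ast}}$ and solves the elliptic coercive system with coefficients in $C^{0,\gamma^{\ast}}$ (here one must check that $\gamma^{\ast}\le\gamma$, replacing $\gamma^{\ast}$ by $\min(\gamma,\gamma^{\ast})$ if necessary, so that the regularity of $f_{\pm}$ and $G$ is not the bottleneck). The Schauder theory for ADN elliptic systems (see \cite{ADN,Morrey}) then gives $w\in C^{2,\gamma}$, and unwinding the hodograph transform shows $F(u)\in C^{2,\gamma}$ and $u\in C^{2,\gamma}$ up to $F(u)$; this is the $k=0$ assertion with a possibly improved $\gamma^{\ast}=\gamma$. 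For the inductive step, suppose the statement holds for $k-1$, so that $F(u)\in C^{k+1,\gamma^{\ast}}$ and $u\in C^{k+1,\gamma^{\ast}}$ up to $F(u)$ from both sides; then after the hodograph/reflection transform, $w\in C^{k+1,\gamma^{\ast}}$ solves an elliptic coercive ADN system whose coefficients now lie in $C^{k-1,\gamma^{\ast}}$, because they are built from $f_{\pm}\in C^{k,\gamma}$, $G\in C^{2+k}$, and from derivatives of $w$ of order up to one. Applying the higher-order Schauder estimates for ADN systems (differentiating the system $k-1$ times and estimating) upgrades $w$ to $C^{k+1,\gamma}$, hence $F(u)\in C^{k+2,\gamma}$ and $u\in C^{k+2,\gamma}$ up to the free boundary, completing the induction. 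The $C^{\infty}$ case follows by letting $k\to\infty$; the real-analytic case instead invokes the analyticity theory for solutions of analytic elliptic systems (Morrey \cite{Morrey}, together with the analytic version of the hodograph method as in \cite{KNS}), since the transform preserves analyticity of the data.

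The main obstacle — really a bookkeeping obstacle rather than a conceptual one — is verifying that the ADN ellipticity and complementing (coercivity) conditions for the transformed system are \emph{stable} along the bootstrap and that the orders in the ADN index system match up so that each application of Schauder gains exactly one derivative: one must track carefully how the jump condition $u_{\nu}^{+}=G(u_{\nu}^{-})$, which involves first derivatives, enters as a boundary operator of the system, and check that the strict monotonicity and $G(0)=1$ assumptions in \eqref{G_e} guarantee the complementing condition remains non-degenerate after the reflection. A secondary point to handle with care is the interplay of the two Hölder exponents $\gamma$ and $\gamma^{\ast}$: after the first bootstrap the free boundary regularity exponent should be taken to be $\gamma$ (the exponent of the data), and one must confirm that Theorem \ref{flatmain1} is only needed to launch the scheme, not at each step. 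Since all of this is classical once the threshold $C^{2,\gamma^{\ast}}$ regularity of Theorem \ref{flatmain1} is in hand, I would keep the argument brief and refer to \cite{KNS,ADN,Morrey} for the details, spelling out only the structure of the transformed system in Appendix B.
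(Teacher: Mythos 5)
Your proposal takes the same route as the paper: the paper gives no detailed proof of the corollary, but the discussion in the introduction and the sketch in Appendix B describe exactly the argument you outline, namely passing the $C^{2,\gamma^{\ast}}$ regularity from Theorem \ref{flatmain1} through the Kinderlehrer--Nirenberg--Spruck partial hodograph and reflection maps to an elliptic coercive ADN system, and then bootstrapping via the Schauder theory of \cite{ADN,Morrey} (with \cite{Morrey} again for the analytic case). Two bookkeeping points are worth tightening. First, in your inductive step you state that differentiating the system $k-1$ times ``upgrades $w$ to $C^{k+1,\gamma}$, hence $F(u)\in C^{k+2,\gamma}$''; under the hodograph transform $F(u)$ is the graph of $x_n=\psi(y',0)$, so the Hölder order of $F(u)$ equals that of $w=\psi$, and the conclusion you need is $w\in C^{k+2,\cdot}$, obtained by a $k$-step (not $(k-1)$-step) bootstrap that gains one tangential derivative at each stage using $f_{\pm}\in C^{k,\gamma}$. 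Second, the corollary only asserts the exponent $\gamma^{\ast}$, which is what a straightforward Schauder iteration yields: at the first step the linearized coefficients, which involve $D^2 w$, are only $C^{0,\gamma^{\ast}}$, so Schauder gives $\min(\gamma,\gamma^{\ast})=\gamma^{\ast}$. Your claim that the base case already improves $\gamma^{\ast}$ to $\gamma$ would require a sharper nonlinear Schauder estimate for the ADN system with coercive boundary conditions and is not needed for the stated conclusion; it is safer to carry the exponent $\gamma^{\ast}$ throughout.
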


As noted in the recent work \cite{KL}, in the case when the governing equation in \eqref{fbtp} is in divergence form the initial assumption to obtain the Corollary above is that $u \in C^{1,\alpha}$. It is not evident that the general case of linear uniformly
elliptic equations with $C^{0,\gamma}$ coefficients can also be treated in a similar manner. On the other hand, the case when the leading operator is say a convex (or concave) fully nonlinear operator definitely requires the solution to have H\"older second derivatives (from both sides). 

Our purpose is to develop a general strategy that would apply to a larger class of problems, possibly to include also the case of fully nonlinear operators.

Other related higher regularity results can be found in \cite{E, K}.


The overall strategy for the proof of Theorem \ref{flatmain1} is based, as
in Theorem 1.3 of \cite{DFS}, on a compactness argument leading to a
limiting linearized problem in which the information for an improvement of
flatness is stored. However, reaching the $C^{2,\gamma}$ regularity requires
a much more involved process because of the possible degeneracy of the
negative part. Indeed this causes a delicate interplay between the two
phases, as we shall try to explain in the next section. Ultimately the main
source of difficulties is due to the presence of a forcing term of general
sign in the negative phase. Indeed, if $f_{-}\geq 0$, Hopf maximum principle
would imply nondegeneracy (also) on the negative side, making the two-phases
of comparable size and considerably simplifying the final iteration
procedure. It is worth noticing, that however even in this easier scenario
(and in particular in the homogeneous case), if one wants to attain uniform
estimates with universal constants, then one must employ the more involved
methods developed here for the degenerate case.

The paper is organized as follows. In Section \ref{strategy} we outline the strategy of
the proof and in Section \ref{initialconfig} through \ref{iteration7} we implement it. In  Appendix A,
we provide a refinement of the classical pointwise $C^{1,\alpha}$ estimates
for elliptic equations. This is a technical tool used in the paper. In Appendix B we sketch the main steps for the reduction of our problem (\ref{fbtp}) to a system  of nonlinear equations as in \cite{KNS}. 


\section{Outline and strategy}\label{strategy}

In this section, we outline the main strategy in the proof of Theorem \ref%
{flatmain1}, trying to emphasize the key points, also in comparison to the 
\emph{flatness implies }$C^{1,\gamma}$\emph{\ case} in \cite{DFS}. The first
thing to do is to reinforce the notion of flatness, tailoring it for the
attainment of $C^{2,\gamma}$ regularity. This can be done by introducing a
suitable class of functions that we call \emph{two-phase} and \emph{one-phase%
} polynomials. In principle second order polynomials should be enough but it
turns out that we need a small third order perturbation.

Given $\omega \in \mathbb{R}^{n},$ with $|\omega |=1$, and let $S_{\omega }$
be an orthonormal basis containing $\omega .$ Let $M\in S^{n\times n}$
satisfy 
\begin{equation*}
M \omega =0
\end{equation*}%
and define 
\begin{equation*}
P_{M,\omega }(x)=x\cdot \omega -\frac{1}{2}x^{T}Mx.
\end{equation*}%
Set, 
\begin{equation*}
V_{M,\omega ,a,b}^{\alpha ,\beta }\left( x\right) =\alpha (1+a\cdot
x)P_{M,\omega }^{+}\left( x\right) -\beta (1+b\cdot x)P_{M,\omega
}^{-}\left( x\right) ,\quad \alpha >0,\beta \geq 0,a,b\in \mathbb{R}^{n}
\end{equation*}%
where the superscripts $\pm $ denote as usual the positive/negative part of
a function. These are our two-phase polynomials, one-phase if $\beta =0$. In
the particular case when $M=0,a=b=0,\omega =e_{n}$ we obtain the two-plane
function: 
\begin{equation*}
U_{\beta }(x)=\alpha x_{n}^{+}-\beta x_{n}^{-}.
\end{equation*}%
The unit vector $\omega $ establishes the \textquotedblleft direction of
flatness\textquotedblright .

We shall need to work with a subclass, strictly related to problem (\ref%
{fbtp}), at least at the origin. We denote by $\mathcal{V}_{f_{\pm },G}$ the
class of functions of the form $V_{M,\omega ,a,b}^{\alpha ,\beta }$ for
which 
\begin{equation*}
2\alpha a\cdot \omega -\alpha tr M=f_{+}(0)
\end{equation*}%
\begin{equation*}
2\beta b\cdot \omega -\beta tr M=f_{-}(0)\quad \text{if $\beta \neq 0,$}
\end{equation*}%
\begin{equation*}
\alpha =G(\beta ), \quad \text{if $\beta \neq 0,$}
\end{equation*}%
and 
\begin{equation*}
\alpha a\cdot \omega ^{\perp }=\beta G^{\prime }\left( \beta \right) b\cdot
\omega ^{\perp },\quad \forall \omega ^{\perp }\in S_{\omega }.
\end{equation*}%
The role of the last condition will be clear in the sequel (e.g. Proposition %
\ref{sub}).

When $\beta =0$, then there is no dependence on $b$ and $a\cdot \omega
^{\perp }=0$. Thus, we drop the dependence on $\beta ,b, G$ and $f_{-}$ in
our notation above and we indicate the dependence on $a_\omega:=a \cdot
\omega.$

We introduce the following definitions.

\begin{defn}
Let $V=V_{M,\omega ,a,b}^{\alpha ,\beta }$. We say that $u$ is $(V,\epsilon
,\delta )$ flat in $B_{1}$ if 
\begin{equation*}
V(x-\epsilon \omega )\leq u(x)\leq V(x+\epsilon \omega )\quad \text{in $%
B_{1} $}
\end{equation*}%
and 
\begin{equation*}
|a|,|b^{\prime }|,\Vert M\Vert \leq \delta \epsilon ^{1/2},\quad |b_{n}|\leq
\delta ^{2},\quad |b_{n}|\Vert M\Vert \leq \delta ^{2}\epsilon .
\end{equation*}
\end{defn}

Given $V=V_{M,\omega ,a,b}^{\alpha ,\beta }$, set 
\begin{equation*}
V_{r}(x)=\frac{V(rx)}{r}
\end{equation*}%
and notice that 
\begin{equation*}
V_{r}=V_{rM,\omega ,ra,rb}^{\alpha ,\beta }.
\end{equation*}

\begin{defn}
Let $V=V_{M,\omega ,a,b}^{\alpha ,\beta }$. We say that $u$ is $(V,\epsilon
,\delta )$ flat in $B_{r}$ if the rescaling 
\begin{equation*}
u_{r}(x):=\frac{u(rx)}{r}
\end{equation*}%
is $(V_{r},\frac{\epsilon }{r},\delta )$ flat in $B_{1}.$
\end{defn}

Notice that if $u$ is $(V,\epsilon ,\delta )$ flat in $B_{r}$ then 
\begin{equation*}
V(x-\epsilon \omega )\leq u(x)\leq V(x+\epsilon \omega )\quad \text{in $%
B_{r} $}.
\end{equation*}

The parameter $\epsilon $ measures the level of polynomial approximation and 
$\delta $ is a flatness parameter (also controlling the $C^{0,\gamma }$
norms of $f_{+}$ and $f_{-}$).

Thus roughly our purpose is to show that $u$ is $(V_{k},\lambda
_{k}^{2+\gamma ^{\ast }},\delta )$ flat in $B_{\lambda _{k}}$ for $\lambda
_{k}=\eta ^{k}$ and all $k\geq 0,$ for some $\delta ,\eta $ small and a
sequence of $V_{k}$ converging to a final profile $V_{0}$. This would give
uniform pointwise $C^{2,\gamma ^{\ast }}$ regularity both for the solution
and the free boundary in $B_{1/2}$.

The starting point is to show (Section \ref{initialconfig}, Lemma \ref{first_step_1}) that the
flatness condition \eqref{flat} allows us to normalize our solution so that
a rescaling $u_{\bar{r}}$ of $u$ falls into one of the following cases, with
suitable $\bar{\lambda},\bar{ \delta}.$ This kind of dichotomy parallels in
a sense what happens in \cite{DFS}.



Case a). $u_{\bar{r}}$ is $(V,\bar{\lambda}^{2+\gamma },\bar{\delta})$ flat
for some $V=V_{0,e_{n},a,b}^{\alpha ,\beta }\in \mathcal{V}_{f_{\pm },G}.$
Moreover, $\beta \bar{\delta}$ controls the $C^{0,\gamma }$ seminorms of $%
f_{-}.$This case corresponds to a nondegenerate configuration, in which the
two phases have comparable size and $u_{\bar{r}}$ is trapped between two
translations of a genuine two-phase polynomial. \smallskip

Case b). $u_{\bar{r}}^{+}$ is $(V,\bar{\lambda}^{2+\gamma },\bar{\delta})$
flat for some $V=V_{0,e_{n},a_{n}}^{1}\in \mathcal{V}_{f_{+}},$ and $u_{\bar{%
r}}^{-}$ is close to a purely quadratic profile $cx_{n}^{2}.$ This case
corresponds to a degenerate configuration, where the negative phase has
either zero slope or a small one (but not negligible) with respect to $u_{%
\bar{r}}^{+}$, and $u_{\bar{r}}^{+}$ is trapped between two translations of
a one-phase polynomial. Note that this situation cannot occur if $f_{-}\geq
0 $, unless $u^-$ is identically zero.\smallskip


Next we examine how the initial flatness corresponding to cases a) and b)
above improves successively at a smaller scale. In Section \ref{improvflat}, we construct
the following \textquotedblleft subroutines\textquotedblright , to be
implemented in the course of the final iteration towards $C^{2,\gamma ^{\ast
}}$ regularity.

\emph{Two-phase }flatness improvement (Proposition \ref{IF2tp}): if $u$ is $%
(V,\bar{\lambda}^{2+\gamma },\bar{\delta})$ flat for some $%
V=V^{\alpha,\beta}_{M,\omega,a,b}\in \mathcal{V}_{f_{\pm },G}$ in $%
B_{\lambda }$, the $C^{0,\gamma }$ seminorms of $f_{+}$ and $f_{-}$ are
controlled by $\bar{\delta}$ and $\beta \bar{\delta}$, respectively, then,
in $B_{\lambda \eta },$ $u$ enjoys a $C^{2,\gamma }$ flatness improvement,
i.e. $u$ is $(\bar{V},\left( \eta \lambda \right) ^{2+\gamma },\bar{\delta})$
flat for some $\bar{V}\in \mathcal{V}_{f_{\pm },G}$, properly close to $V$.

\emph{One-phase }flatness improvement (Proposition \ref{IF}): if $u^{+}$ is $%
(V,\bar{\lambda}^{2+\gamma },\bar{\delta})$ flat for some $%
V=V^\alpha_{M,\omega,a_\omega}\in \mathcal{V}_{f_{+}}$ in $B_{\lambda },$
the $C^{0,\gamma }$ seminorm of $f_{+}$ is controlled by $\bar{\delta}$ and $%
u_{\nu }^{+}$ is close to $\alpha$ on $F\left( u\right) $, then $u^{+}$
enjoys a $C^{2,\gamma }$ flatness improvement, with $\bar{V}\in \mathcal{V}%
_{f_{+}}$, properly close to $V$.\smallskip

The achievement of the improvements above relies on a Harnack inequality,
and a higher order refinement of Theorems 4.1 and 4.4 in \cite{DFS}. This
gives the necessary compactness to pass to the limit in a sequence of
suitable renormalizations of $u$ and obtain a limiting transmission problem
(Neumann problem in the one phase-case). From the regularity of the solution
of this problem we get the information to improve the two-phase or one-phase
approximation for $u$ or $u^{+}$ respectively, and hence their
flatness.\smallskip

Now we start iterating. As we have seen, according to Lemma \ref%
{first_step_1}, after a suitable rescaling, we face a first dichotomy
\textquotedblleft degenerate versus nondegenerate\textquotedblright .

In the latter case the two-phase subroutine in Proposition \ref{IF2tp} can
be applied indefinitely to reach pointwise $C^{2,\gamma ^{\ast }}$
regularity for some universal $\gamma ^{\ast }$.

When $u$ falls into the degenerate case a new kind of dichotomy appears.
This is the deepest part of the paper. First of all, to run the subroutine
in Proposition \ref{IF} one needs to make sure that the closeness of $u^{-}$
to a purely quadratic profile makes $u^{+}$ to be a (viscosity) solution of
a one-phase free boundary problem with $u_{\nu }^{+}$ close to an
appropriate $\alpha$ on $F\left( u\right).$ This is the content of Lemma \ref%
{fbcondition}, in Section \ref{dicoD}. At this point two alternatives occur
at a smaller scale (Proposition \ref{first_step}).

\begin{itemize}
\item[D1] : either $u^{-}$ is closer to a purely quadratic profile at a
proper $C^{2,\gamma }$ rate and $u^{+}$ enjoys a $C^{2,\gamma}$ flatness
improvement;

\item[D2] : or $u^{-}$ is closer (at a $C^{2,\gamma }$ rate) to a one-phase
polynomial profile with a small non-zero slope but $u^{+}$ only enjoys an
\textquotedblleft intermediate\textquotedblright\ $C^{2}$ flatness
improvement.
\end{itemize}


If D1 occurs indefinitely we are done. If not, we prove that (Proposition %
\ref{it}) the intermediate improvement in D2 is kept for a while, at smaller
and smaller scale. The final and crucial step is to prove (Proposition \ref%
{together}) that, at a given universally small enough scale, the $%
C^{2,\gamma }$ one-phase approximation of $u^{-},$ together with the
intermediate $C^{2}$ flatness improvement of $u^{+},$ is good enough to
recover a full $C^{2,\gamma ^{\ast }}$ two-phase improvement of $u$ with a
universal $\gamma ^{\ast }<\gamma $.

As we have mentioned at the end of Section \ref{introduzione}, we emphasize that it is the
interplay between the parallel improvements on both sides of the free
boundary that makes possible to obtain the full two-phase improvement, at
the price of a little decrease of the H\"{o}lder exponent. This kind of
situation has no counterpart in the \emph{flatness implies }$C^{1,\gamma}$%
\emph{\ case} of \cite{DFS}.

From this point on we can go back to subroutine of Proposition \ref{IF2tp}
and finally reach pointwise $C^{2,\gamma ^{\ast }\text{ }}$%
regularity.\bigskip

In the next section we start implementing the above strategy. In the course
of a proof, universal constants possibly changing from line to line will be
denoted by $c,C.$ Dependence on other parameters, will be explicitly noted.

\section{Initial configurations}\label{initialconfig}

As we mentioned in Section \ref{strategy}, we start by showing that the flatness
condition \eqref{flat} allows us to normalize our solution so that a
rescaling $u_{\bar{r}}$ of $u$ satisfies a suitable $%
(V,\varepsilon ,\delta )$ flatness. We first recall the following result
proved in \cite{DFS}. Set 
\begin{equation*}
u_{r}(x):=\frac{u(rx)}{r},\quad {f_{\pm}}_r(x)=rf_{\pm }(rx),\quad x\in
B_{1}.
\end{equation*}%

\begin{lem}\label{normalize} Let $u$ be a (Lipschitz) solution to \eqref{fbtp} in $B_1$ with  $Lip(u) \leq L$ and $\|f_\pm\|_{L^\infty} \leq L$. For any $\eps >0$, $\eps< \eps_0=\eps_0(n,L)$,  there exist $\bar \eta$ depending on $\eps, n$ and $L$, $\bar \eta \leq \eps^4$, such that if $u$ satisfies \eqref{flat} for some $\eta \leq \bar \eta$ then
\be\label{conclusion_beta}\|u_r - U_{\beta}\|_{L^{\infty}(B_{1})} \leq \eps , \quad \text{for some $0 \leq \beta \leq L,$}\ee 
\be\|{f_\pm}_{r}\|_\infty \leq \eps, \quad |{f_\pm}_{r}(x)-{f_\pm}_{r}(0)| \leq \eps |x|^\gamma \ee
\be\{x_n \leq - \eps\} \subset \{u^+_{r}=0\} \subset \{x_n \leq \eps\}, \ee
and $r = \eps^3.$

\end{lem}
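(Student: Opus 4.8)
The plan is to prove Lemma \ref{normalize} by a compactness/normal-families argument, exploiting the fact that the flatness hypothesis \eqref{flat} with a very small $\eta$ forces $u$ to be close to a global two-plane solution. First I would rescale: given the target $\eps$, work with $u_r$ for $r=\eps^3$ and ${f_\pm}_r(x)=rf_\pm(rx)$. Since $\|f_\pm\|_{L^\infty}\le L$ and the H\"older seminorm of $f_\pm$ is bounded, the rescaled forcing terms satisfy $\|{f_\pm}_r\|_\infty\le rL=\eps^3 L\le \eps$ and $|{f_\pm}_r(x)-{f_\pm}_r(0)|\le r^{1+\gamma}[f_\pm]_{C^{0,\gamma}}|x|^\gamma\le \eps|x|^\gamma$ once $\eps_0$ is small, so the last two displayed estimates are essentially immediate from the choice $r=\eps^3$. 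The geometric inclusion $\{x_n\le-\eps\}\subset\{u_r^+=0\}\subset\{x_n\le\eps\}$ in $B_1$ is just the rescaling of \eqref{flat} at scale $r$: it holds provided $\bar\eta\le \eps r=\eps^4$, which is exactly the claimed bound on $\bar\eta$.

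The substantive point is \eqref{conclusion_beta}, the $L^\infty$ closeness of $u_r$ to some two-plane function $U_\beta=\alpha x_n^+-\beta x_n^-$ with $\alpha=G(\beta)$ and $0\le\beta\le L$. I would argue by contradiction: suppose the statement fails for some $\eps>0$; then there is a sequence $\eta_k\to 0$ and solutions $u_k$ to \eqref{fbtp} (with forcing terms $f_\pm^k$ uniformly bounded and uniformly $C^{0,\gamma}$, and $Lip(u_k)\le L$) satisfying \eqref{flat} with $\eta=\eta_k$, yet $\|(u_k)_{r}-U_\beta\|_{L^\infty(B_1)}>\eps$ for every admissible $\beta$. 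By the uniform Lipschitz bound, a subsequence of $u_k$ converges locally uniformly to a limit $u_\infty$ which is Lipschitz in $B_1$. The forcing terms, rescaled, tend to $0$, so $u_\infty$ is harmonic in $B_1^\pm(u_\infty)$; the inclusions \eqref{flat} with $\eta_k\to 0$ force $\{u_\infty^+=0\}=\{x_n\le 0\}\cap B_1$, so the free boundary of the limit is the flat hyperplane $\{x_n=0\}$. One then shows $u_\infty$ is a classical (viscosity) solution of the homogeneous free boundary problem $u_\nu^+=G(u_\nu^-)$ across $\{x_n=0\}$, and by a Liouville-type / flat-free-boundary classification (the homogeneous two-phase result), $u_\infty$ must be a two-plane function $\alpha x_n^+-\beta x_n^-$ with $\alpha=G(\beta)$, $\beta\le L$. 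This contradicts the assumed $\eps$-separation once $k$ is large, since $(u_k)_r\to u_\infty$ uniformly on $B_1$ (note $r$ also depends on $\eps$, not $k$, so the rescaling is harmless).

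The main obstacle, and the step deserving the most care, is the passage to the limit across the free boundary: one must justify that $u_\infty$ satisfies the transmission condition $u_\nu^+=G(u_\nu^-)$ on $\{x_n=0\}$ in the viscosity sense, which requires ruling out degeneracy phenomena (e.g. that $u_\infty^+$ does not collapse to zero while $u_k^+$ does not) and controlling the nondegeneracy of $u_k^+$ near the free boundary uniformly in $k$. This is precisely the kind of argument carried out in \cite{DFS} for the $C^{1,\gamma}$ theory, so I would cite the corresponding compactness and limit results there rather than redo them. A secondary technical point is to check that all constants — $\eps_0$, $\bar\eta$, and the dependence of $\bar\eta$ on $\eps,n,L$ — can be tracked through the contradiction argument; but since the statement only asserts existence of such $\bar\eta$ (not an explicit formula), the compactness argument suffices, with the explicit bound $\bar\eta\le\eps^4$ coming, as noted, directly from the rescaling needed for the geometric inclusion.
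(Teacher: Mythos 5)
Two preliminary remarks. First, the paper does not actually prove this lemma; it is introduced with the words ``we first recall the following result proved in \cite{DFS}'', so there is no internal proof to compare yours against --- you are really proposing a proof of a result that the authors outsource. Second, your treatment of the elementary parts is correct: the rescalings ${f_\pm}_r(x)=rf_\pm(rx)$ give $\|{f_\pm}_r\|_\infty\le rL=\eps^3L\le\eps$, the oscillation estimate follows since $|{f_\pm}_r(x)-{f_\pm}_r(0)|\le r^{1+\gamma}[f_\pm]_{C^{0,\gamma}}|x|^\gamma$ (assuming, as the paper does throughout, that $\|f_\pm\|_{C^{0,\gamma}(B_1)}$ is part of the universal data even though the lemma's hypotheses mention only $\|f_\pm\|_{L^\infty}\le L$), and the inclusion $\{x_n\le-\eps\}\subset\{u_r^+=0\}\subset\{x_n\le\eps\}$ follows from \eqref{flat} precisely when $\eta/r\le\eps$, i.e.\ $\bar\eta\le\eps^4$.

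The substantive part of your argument, however, has a genuine gap, and it is concentrated in exactly the two sentences you try to dispatch quickly. You write that ``the forcing terms, rescaled, tend to $0$, so $u_\infty$ is harmonic''. But you defined $u_\infty$ as a limit of $u_k$, \emph{not} of $(u_k)_r$; the $u_k$ solve $\Delta u_k^\pm=f_\pm^k$ with $\|f_\pm^k\|_\infty\le L$, which is bounded, not vanishing, so $u_\infty$ is not harmonic. If you instead pass to the limit in $(u_k)_r$ the rescaled forcing is bounded by $rL=\eps^3L$ --- small, but fixed, because $r$ depends on $\eps$ and not on $k$, so it still does not tend to zero. Either way the limit solves an inhomogeneous equation, and your later line ``$(u_k)_r\to u_\infty$'' is inconsistent with the earlier definition of $u_\infty$. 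More seriously, even granting harmonicity, the step ``by a Liouville-type / flat-free-boundary classification, $u_\infty$ must be a two-plane function'' is false in a bounded domain. A harmonic two-phase viscosity solution in $B_1$ with free boundary exactly $\{x_n=0\}$ and satisfying $u_\nu^+=G(u_\nu^-)$ there need not be a two-plane: for instance with $G$ the identity, $u(x)=x_n(1+cx_1)$ is harmonic on both sides of $\{x_n=0\}$, has free boundary $\{x_n=0\}$ in $B_1$ for $|c|<1$, satisfies $u_\nu^+=u_\nu^-=1+cx_1$, yet is at $L^\infty$-distance of order $|c|$ from every $U_\beta$. The Liouville classification you have in mind holds for entire solutions with linear growth in $\R^n$, not for solutions on $B_1$ with prescribed flat free boundary. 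So the compactness argument, as written, does not close: you still need to explain why the limit in $B_1$ is \emph{close} to a two-plane at the scale $r=\eps^3$ --- for example by invoking up-to-the-boundary $C^{1,\gamma}$ estimates for $u_\infty^\pm$ on each half-ball, combined with the one-point free boundary relation at the origin, and then tracking how the Taylor remainder scales under the rescaling $u\mapsto u_r$. Without making this quantitative step explicit the contradiction does not materialize.

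In short: the elementary scaling estimates are fine and match what is needed; the compactness scheme is the right general idea and is indeed the mechanism behind \cite{DFS}; but the two key claims you state in one line each --- that the limit is harmonic and that it is classified by a Liouville theorem --- are, respectively, internally inconsistent with your own setup and false in $B_1$. Those are the points that actually carry the weight of the lemma, and they need to be fixed before the proof is correct.
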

Let $u$ be as in Lemma \ref{normalize} and for a given $\eps$, let $\bar \eta(\eps)$ and $r(\eps)$ be the corresponding parameters provided by the lemma.

In the next Lemma, we denote by $\bar \delta, \bar \lambda$ the universal constants which will be chosen later in Proposition \ref{IF2tp} and Proposition \ref{first_step} (say for $\beta_1=L+1$).

\begin{lem}\label{first_step_1}There exists $\bar \eps$ universal such that if $u$ satisfies \eqref{flat} with $\bar \eta= \bar \eta(\bar \eps)$
then either of these flatness conditions holds with $\bar r=\bar r(\bar \eps).$
\begin{enumerate}
\item Degenerate case: 

$u^+_{\bar r}$ is $(V, \bar \lambda^{2+\gamma}, \bar \delta )$ flat in $B_1$, for 
$V=V^{1}_{0,e_n,a_n} \in \mathcal V_{f_+}$,  
$$|u^-_{\bar r}+ \frac 1 2{f_-}_{\bar r}(0) x_n^2| \leq \bar \delta^{1/2} \bar \lambda^{2+\gamma} \quad \text{in $B_1^-(u_{\bar r})$}$$
and
$$\|{f_-}_{\bar r}\|_\infty \leq \bar \delta, \quad |{f_\pm}_{\bar r}(x)-{f_\pm}_{\bar r}(0)| \leq \bar \delta |x|^\gamma$$

\item Non-degenerate case:

$u_{\bar r}$ is $(V, \bar \lambda^{2+\gamma}, \bar \delta)$ in $B_1$, 
with $V=V^{\alpha, \beta}_{0,e_n,a,b} \in \mathcal V_{{f_\pm}_{\bar r}, G},$
$$a'=b'=0, \quad \beta \geq \frac 1 2 \bar \delta^{1/2}\bar \lambda^{2+\gamma},$$
and
$$|{f_+}_{\bar r}(x)-{f_+}_{\bar r}(0)| \leq \bar \delta |x|^\gamma\quad |{f_-}_{\bar r}(x) - {f_-}_{\bar r}(0)| \leq \beta \bar \delta |x|^\gamma.$$
\end{enumerate}
\end{lem}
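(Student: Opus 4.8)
The plan is to start from Lemma~\ref{normalize}, which for a suitably small $\bar\eps$ (to be fixed universally at the very end) produces a rescaling $u_r$, $r=\bar\eps^3$, with $\|u_r-U_\beta\|_{L^\infty(B_1)}\le\bar\eps$ for some $0\le\beta\le L$, together with the smallness of $\|{f_\pm}_r\|_\infty$, the $C^{0,\gamma}$ control of ${f_\pm}_r$ at the origin, and the flatness of the zero set in the $e_n$-direction. From here I would separate the two cases according to the size of $\beta$ relative to the threshold $\tfrac12\bar\delta^{1/2}\bar\lambda^{2+\gamma}$ appearing in the statement. Since $\bar\delta,\bar\lambda$ are universal (fixed downstream in Propositions~\ref{IF2tp} and~\ref{first_step}), this threshold is a fixed universal number, and we may and do take $\bar\eps$ much smaller than any universal quantity built out of $\bar\delta,\bar\lambda,\eta$.

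\textbf{Non-degenerate case ($\beta$ not too small).} If $\beta\ge\tfrac12\bar\delta^{1/2}\bar\lambda^{2+\gamma}$, then $u_r$ is genuinely two-phase and I would construct the candidate polynomial $V=V^{\alpha,\beta}_{0,e_n,a,b}$ directly: take $\omega=e_n$, $M=0$, and choose $\alpha=G(\beta)$ so the transmission condition at $0$ is met; then solve the two scalar linear constraints $2\alpha a_n=({f_+})_r(0)$ and $2\beta b_n=({f_-})_r(0)$ for the normal components $a_n,b_n$, setting the tangential components $a'=b'=0$ (this automatically satisfies the last defining condition $\alpha a\cdot\omega^\perp=\beta G'(\beta)b\cdot\omega^\perp$ of $\mathcal V_{{f_\pm}_r,G}$, since both sides vanish). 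The smallness $\|{f_\pm}_r\|_\infty\le\bar\eps$ gives $|a_n|,|b_n|\lesssim \bar\eps/\beta\lesssim \bar\eps^{1/2}$ using the lower bound on $\beta$, hence $|a|,|b'|,\|M\|\le\bar\delta\bar\lambda^{(2+\gamma)/2}$ and $|b_n|\le\bar\delta^2$, $|b_n|\|M\|=0$, as required by the flatness definition at scale $\bar\lambda$. The trapping $V(x-\varepsilon\omega)\le u_r\le V(x+\varepsilon\omega)$ with $\varepsilon=\bar\lambda^{2+\gamma}$ follows from $\|u_r-U_\beta\|_\infty\le\bar\eps$ together with the Lipschitz-in-$x$ comparison of $V$ with $U_\beta$ and a covering/dilation argument: rescaling to $B_{\bar\lambda}$ (equivalently replacing $u_r$ by $u_{\bar r}$ with $\bar r=r\bar\lambda$) converts the $O(\bar\eps)$ sup-bound into the required $\bar\lambda^{2+\gamma}$-order flatness, precisely because $\bar\eps\ll\bar\lambda^{2+\gamma}\bar\delta$. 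The $C^{0,\gamma}$ bounds $|{f_+}_{\bar r}(x)-{f_+}_{\bar r}(0)|\le\bar\delta|x|^\gamma$ and $|{f_-}_{\bar r}(x)-{f_-}_{\bar r}(0)|\le\beta\bar\delta|x|^\gamma$ are obtained from the corresponding bounds in Lemma~\ref{normalize} (with $\bar\eps$ in place of $\bar\delta$, resp.\ $\beta\bar\delta$) after the further dilation by $\bar\lambda$, again absorbing constants into the freedom in $\bar\eps$; here the factor $\beta$ on the $f_-$ side is available because $\beta$ is bounded below.

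\textbf{Degenerate case ($\beta$ very small).} If $\beta<\tfrac12\bar\delta^{1/2}\bar\lambda^{2+\gamma}$, then $U_\beta$ is within $\bar\delta^{1/2}\bar\lambda^{2+\gamma}/2$ of the one-plane function $x_n^+$ in $B_1$, so $u_r^+$ is close to $x_n^+$ and I would take $V=V^1_{0,e_n,a_n}\in\mathcal V_{f_+}$ with $\alpha=1$, $M=0$, $a_n$ determined by $2a_n=({f_+})_r(0)$; the one-phase flatness of $u_r^+$ follows exactly as above. For the negative phase, the point is that $u_r^-$ is tiny (of size $O(\beta+\bar\eps)$ in $L^\infty$) and satisfies $\Delta u_r^-={f_-}_r$ in its positivity set with the same boundary smallness; a barrier/interior-estimate argument — comparing $u_r^-$ with the explicit quadratic $-\tfrac12{f_-}_r(0)x_n^2$ and controlling the difference, which is harmonic up to the $C^{0,\gamma}$ error of $f_-$, by boundary data of size $O(\beta+\bar\eps)$ plus the oscillation error — yields $|u_r^-+\tfrac12{f_-}_r(0)x_n^2|\le\bar\delta^{1/2}\bar\lambda^{2+\gamma}$ in $B_1^-(u_r)$ after the dilation by $\bar\lambda$, once $\bar\eps$ is chosen small enough. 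The remaining $C^{0,\gamma}$ and $L^\infty$ bounds on ${f_\pm}_{\bar r}$ are immediate from Lemma~\ref{normalize}.

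\textbf{Main obstacle.} The delicate point is the bookkeeping of the three small parameters $\bar\eps\ll\bar\eta\ll$ (something built from $\bar\delta,\bar\lambda,\eta$): one must verify that the $O(\bar\eps)$-closeness to $U_\beta$, after the extra dilation from scale $r$ to scale $\bar r=r\bar\lambda$, really upgrades to the demanded $\bar\lambda^{2+\gamma}$-order flatness \emph{with} the correct $\bar\delta$-weighted side conditions on $M,a,b$ — in particular the asymmetric requirements $|b_n|\le\bar\delta^2$ and $|b_n|\|M\|\le\bar\delta^2\varepsilon$, and the $\beta$-weighted $C^{0,\gamma}$ bound on $f_-$ in the non-degenerate case, which is exactly where the lower bound on $\beta$ is used. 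The harmonic-approximation estimate for $u_r^-$ in the degenerate case is the only genuinely analytic step and should be routine given the $L^\infty$-smallness of $u_r^-$ and the $C^{0,\gamma}$ control of $f_-$; everything else is choosing constants in the right order.
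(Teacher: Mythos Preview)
Your case split on the size of $\beta$ and the construction of $V$ with $M=0$, $\omega=e_n$, and $a_n,b_n$ determined by the compatibility conditions is exactly the paper's route. However, two steps in your plan are unnecessary detours that betray a small misunderstanding of where the gain actually comes from.

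\textbf{No extra rescaling is needed.} The paper keeps $\bar r=r(\bar\eps)=\bar\eps^{3}$ and works directly in $B_1$; there is no passage to $\bar r=r\bar\lambda$. Your proposed rescaling to $B_{\bar\lambda}$ would in fact \emph{worsen} the sup-bound (dividing by $\bar\lambda$), not improve it. The trapping $V(x-\eps^*\,e_n)\le u_{\bar r}\le V(x+\eps^*\,e_n)$ with $\eps^*=\bar\lambda^{2+\gamma}$ follows at scale $\bar r$ itself, simply because $\bar\eps$ is chosen last and can be taken much smaller than the fixed universal number $\tilde\delta:=\tfrac12\bar\delta^{1/2}\eps^*$. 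In the non-degenerate case one uses that $V'\ge\tfrac12\tilde\delta$ (since $\beta\ge\tilde\delta$), so $u_{\bar r}\le V(x_n)+2\bar\eps\le V(x_n+\eps^*)$ once $\bar\eps\ll\tilde\delta\,\eps^*$; in the degenerate case one uses that $V^+$ has slope $\ge\tfrac12$. This is all elementary, and it is the only place the smallness hierarchy $\bar\eps\ll\tilde\delta<\eps^*$ enters.

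\textbf{No PDE argument is needed for $u^-$ in the degenerate case.} Your proposed barrier/harmonic-approximation step is overkill. From $\|u_{\bar r}-U_\beta\|_\infty\le\bar\eps$, the flatness of $F(u_{\bar r})$, and $\beta<\tilde\delta$, one reads off $|u_{\bar r}^-|\le(1+\alpha)\bar\eps+\tilde\delta$ in $B_1^-(u_{\bar r})$; and $|{f_-}_{\bar r}(0)|\le\bar\eps$ gives $|\tfrac12{f_-}_{\bar r}(0)x_n^2|\le\tfrac12\bar\eps$. The triangle inequality then yields
\[
\Big|u_{\bar r}^-+\tfrac12{f_-}_{\bar r}(0)x_n^2\Big|\le C\bar\eps+\tilde\delta\le 2\tilde\delta=\bar\delta^{1/2}\bar\lambda^{2+\gamma},
\]
again just from $\bar\eps\ll\tilde\delta$. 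No interior estimates or barrier construction are required; the ``genuinely analytic step'' you flag does not exist here.

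In short: your plan is right, but the work is easier than you anticipate---the only input beyond Lemma~\ref{normalize} is the freedom to take $\bar\eps$ arbitrarily small relative to the universal quantities $\bar\delta,\bar\lambda$.
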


\begin{proof} 
Call $\eps^* = \bar \lambda^{2+\gamma}$ and $\tilde \delta = \frac 1 2 \bar \delta^{1/2} \eps^*$. Let $\bar \eps << \tilde \delta < \eps^*$ and $\bar \eps < \bar \delta$, to be made precise later. For such $\bar \eps$ the conclusion of Lemma \ref{normalize} above gives that in $B_1$
\be\label{+}\|u_{\bar r} - U_\beta\|_\infty \leq \bar \eps, \quad \|{f_\pm}_{\bar r }\|_\infty \leq \bar \eps, \quad |{f_\pm}_{\bar r}(x) - {f_\pm}_{\bar r}(0) | \leq \bar \eps |x|^\gamma \ee 
and 
\be\label{-} \{x_n \leq - \bar \eps\} \subset \{u^+_{\bar r}=0\} \subset \{x_n \leq \bar \eps\},\ee
for $\bar r=\bar r(\bar\eps)$ and some $0 \leq \beta \leq L.$

We distinguish two cases. For notational simplicity we drop the subindex $\bar r.$

\medskip

{\it Degenerate Case.} $\beta < \tilde \delta$. In this case we wish to show that
\be V(x_n -\eps^*) \leq u^+(x) \leq V(x_n +\eps^*)\ee
with
$$V(x_n):= (1+a_nx_n)x_n^+, \quad a_n=\frac{f_+(0)}{2}.$$
We prove the upper bound, as the lower bound can be obtained similarly. From the first two equations in \eqref{+} we get (for $\bar \eps << \eps^*$, $\alpha=G(\beta)$)
$$u^+ \leq \alpha x_n^+ + \bar\eps \leq \alpha V(x_n) + c \bar \eps \leq \alpha V(x_n + 2 \eps^* ),$$
hence
$$u^+ \leq V(x_n + 2\eps^*) + 2 \tilde \delta \leq V(x_n+\eps^*)$$
as long as $\tilde \delta \leq \frac 1 4 \eps^*$ (which is clearly satisfied because $\bar \delta$ is very small).

The bound for $u^-$ follows immediately by noticing that in view of \eqref{+} and \eqref{-},
$$|u^-+ \frac 1 2f_-(0) x_n^2| \leq \bar\eps + \alpha \bar \eps + \tilde \delta \quad \text{in $B_1^-(u)$.}$$

\medskip

{\it Non-degenerate Case.} $\beta \geq \tilde \delta.$ In this case we want to show that 
$$V(x_n -\eps^*) \leq u(x) \leq V(x_n+ \eps^*) \quad \text{in $B_1$}$$
with
$$V(x_n)=\alpha(1+a_nx_n)x_n^+-\beta(1+b_n x_n)x_n^-, \quad 2\alpha a_n= f_+(0),  \quad 2\beta b_n = f_-(0) \quad \alpha=G(\beta).$$
Let us prove the upper bound. In view of \eqref{+} we get,
$$u \leq V(x_n) + 2\bar\eps \leq v(x_n + \eps^*)$$
where in the last inequality we have used that $V' \geq \frac 1 2 \tilde \delta$ and $\bar \eps << \tilde \delta < \eps^*.$
The bound on the modulus of continuity of $f_-$ also follows because $\beta \geq \tilde \delta >> \bar \eps.$
\end{proof}

We conclude this section by providing sufficient conditions for a two-phase/one-phase polynomial $V$ to be a strict subsolution (resp. supersolution). We will work simultaneously with the two-phase problem \eqref{fbtp} 
and with the one-phase problem
\be\label{fb_almost_2}\begin{cases}
\Delta v = f_+ \quad \text{in $B_1^+(v),$}\\
|v^+_\nu - \alpha|\leq \delta^{1/2}\eps \quad \text{on $F(v),$}\
\end{cases}
\ee
with $\delta$ and $\eps$ sufficiently small constants. The free boundary will always be $C^{1,\gamma}$.

The two-phase results are needed to deal with the non-degenerate case, that is the case when our flat solution $u$ to \eqref{fbtp} is trapped between two translates of a function $V \in \mathcal V_{f_\pm,G}$ with a positive slope $\beta$ (not too small). 
The one-phase results will be of use  when we will deal with the degenerate case, that is when the flatness of the free boundary only guarantees closeness of the positive part $u^+$ to a quadratic profile, i.e. $\beta=0.$ 

Precisely we prove the following Proposition. The corresponding statement for $V$ to be a strict supersolution can also be obtained. Here, $0 \leq \beta \leq \beta_1, 1=G(0) \leq \alpha \leq \alpha_1=G(\beta_1)$. Dependence of the constants on $\beta_1$ is not noted (as it will be fixed universal.)

\begin{prop}\label{sub} Assume that in $B_1$
\be\label{f+}|f_+(x)-f_+(0)| \leq \delta \eps, \quad \text{and} \quad |f_-(x)-f_-(0)| \leq \beta \delta \eps \quad \text{if $\beta \neq 0$.}\ee Given $V=V^{\alpha,\beta}_{M,\omega, a, b}$ with
\be \label{Mabsmall}\|M\|, |a|, |b'| \leq \delta \eps^{1/2}, \ee 
\be\label{new}|b_n|\leq \bar C \delta^2, \quad |b_n|\|M\|  \leq \bar C \delta^2\eps,\ee
$V$ is a strict subsolution to 
\eqref{fbtp} for $\beta \neq 0$ or to \eqref{fb_almost_2} for $\beta=0$,
if 
\be\label{1+}2 \alpha a \cdot \omega - \alpha tr M \geq f_+(0) +  2\delta \eps\ee
\be \label{2+}2 \beta b \cdot \omega  - \beta tr M \geq f_-(0) +  2\beta \delta \eps \quad \text{if $\beta \neq 0$}\ee
and 
\be\label{4+} \alpha+ \alpha t a \cdot \omega^\perp \geq G(\beta) + \beta G'(\beta) t b \cdot \omega^\perp + \delta^{1/2}\eps, \quad \forall t \in [0,1]\ee
as long as $\delta$ is small (depending on $\bar C$).
\end{prop}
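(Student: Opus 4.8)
The plan is to verify the viscosity subsolution property directly: one must check that $V=V^{\alpha,\beta}_{M,\omega,a,b}$ cannot be touched from above at an interior point of either phase by a solution of the PDE (this reduces to a one-sided bound on $\Delta V$ in each phase), and that the free boundary condition holds with the correct strict inequality at every point of $F(V)=\{P_{M,\omega}=0\}$. First I would compute $\Delta V$ in $B_1^+(V)=\{P_{M,\omega}>0\}$ and in $B_1^-(V)$. Since $P_{M,\omega}(x)=x\cdot\omega-\tfrac12 x^TMx$ with $M\omega=0$, one has $\nabla P_{M,\omega}=\omega-Mx$ and $\Delta P_{M,\omega}=-\mathrm{tr}\,M$, and $P_{M,\omega}$ vanishes simply across $F(V)$ with $|\nabla P_{M,\omega}|$ close to $1$ by \eqref{Mabsmall}. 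A direct Leibniz computation then gives, in $B_1^+(V)$,
\[
\Delta V=\alpha\big(2a\cdot\nabla P_{M,\omega}+(1+a\cdot x)\Delta P_{M,\omega}\big)=\alpha\big(2a\cdot\omega-\mathrm{tr}\,M\big)+E_+ ,
\]
where the error $E_+$ collects the terms $-2\alpha a\cdot Mx$ and $-\alpha(a\cdot x)\mathrm{tr}\,M$, each of size $O(\delta^2\eps^{1/2})$ by \eqref{Mabsmall}; hence $\Delta V\ge f_+(0)+2\delta\eps+E_+\ge f_+(0)+\delta\eps\ge f_+(x)+\tfrac12\delta\eps$ using \eqref{1+} and \eqref{f+}, for $\delta$ small. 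So $\Delta V> f_+$ strictly in the positive phase, which is the subsolution condition there. In the negative phase (when $\beta\ne0$) the analogous computation with the extra $b_n$ terms controlled by \eqref{new} gives $\Delta V\le f_-$ strictly in $B_1^-(V)$; note the sign flip because $V^-$ enters $V$ with a minus sign, so the required inequality there is $\Delta(V)\le f_-$, i.e. $\Delta(-V)\ge -f_-$, consistent with the subsolution notion for the negative part.

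The core of the argument is the free boundary inequality. Fix $x_0\in F(V)$ and let $\nu=\nabla P_{M,\omega}(x_0)/|\nabla P_{M,\omega}(x_0)|$ be the inner normal to $B_1^+(V)$. Then $V_\nu^+(x_0)=\alpha(1+a\cdot x_0)|\nabla P_{M,\omega}(x_0)|$ and, if $\beta\ne0$, $V_\nu^-(x_0)=\beta(1+b\cdot x_0)|\nabla P_{M,\omega}(x_0)|$. I would factor out $|\nabla P_{M,\omega}(x_0)|>0$, so the condition $V_\nu^+>G(V_\nu^-)$ at $x_0$ becomes, after dividing,
\[
\alpha(1+a\cdot x_0)\;>\;\tfrac{1}{|\nabla P_{M,\omega}(x_0)|}\,G\!\big(\beta(1+b\cdot x_0)|\nabla P_{M,\omega}(x_0)|\big).
\]
On $F(V)\cap B_1$ we have $x_0=(x_0\cdot\omega)\omega+x_0^{\perp}$ with $x_0\cdot\omega=\tfrac12 x_0^TMx_0=O(\delta\eps^{1/2})$ small, and writing $t b\cdot\omega^{\perp}$ with $t\in[0,1]$ for the tangential part of $b\cdot x_0$, the hypothesis \eqref{4+} is precisely engineered so that after a first-order Taylor expansion of $G$ around $\beta$ — using $G(\beta(1+b\cdot x_0))\approx G(\beta)+\beta G'(\beta)\,b\cdot x_0$ — and absorbing the $a\cdot\omega$, $\mathrm{tr}\,M$, $|\nabla P_{M,\omega}|$-corrections into the $\delta^{1/2}\eps$ slack, the strict inequality \eqref{4+} with margin $\delta^{1/2}\eps$ survives as a strict free boundary inequality. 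In the case $\beta=0$ the requirement is instead $|V_\nu^+-\alpha|\le\delta^{1/2}\eps$ on $F(V)$, i.e. the one-phase problem \eqref{fb_almost_2}, and this follows immediately from $V_\nu^+(x_0)=\alpha(1+a\cdot x_0)|\nabla P_{M,\omega}(x_0)|$ together with $|a\cdot x_0|,\, \big||\nabla P_{M,\omega}(x_0)|-1\big|\lesssim\delta\eps^{1/2}\ll\delta^{1/2}\eps$.

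The main obstacle, and the step requiring care, is bookkeeping the error terms in the right units: \eqref{Mabsmall} gives only $\|M\|,|a|,|b'|\le\delta\eps^{1/2}$, not $\le\delta\eps$, so quadratic-in-these-quantities errors are $O(\delta^2\eps)$ and genuinely absorbable into the $\delta\eps$-type slacks of \eqref{1+}--\eqref{2+}, but linear errors like $\alpha(a\cdot x)\mathrm{tr}\,M$ must be paired correctly — here $|x|\le1$, so this is $O(\delta^2\eps^{1/2})$, still small against $\delta\eps$ only because $\delta\ll1$, hence the stated dependence of the smallness of $\delta$ on $\bar C$. Likewise the unusual hypotheses \eqref{new} on $b_n$ are exactly what is needed to keep the $b_n\|M\|$ cross term in the negative-phase Laplacian and in the $V_\nu^-$ expansion within an $O(\delta^2\eps)$ budget; I would isolate these two estimates as the only delicate points and treat everything else as routine Taylor expansion. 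Finally, since all inequalities obtained are strict and $V$ is smooth on each side with $F(V)$ a $C^{1,\gamma}$ (indeed smooth) hypersurface on which $\nabla P_{M,\omega}\ne0$, the comparison with touching classical/viscosity solutions is standard, and $V$ is a strict subsolution in the required sense.
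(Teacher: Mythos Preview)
Your approach --- computing $\Delta V$ directly in each phase and checking the free boundary inequality via the explicit formulas for $V_\nu^\pm$ --- is exactly the paper's, and most of your bookkeeping is sound. However, there is a genuine error in the negative phase: you assert that the subsolution condition there is $\Delta V \leq f_-$, invoking a ``sign flip because $V^-$ enters $V$ with a minus sign.'' This is incorrect. The equation in $B_1^-(u)$ is $\Delta u = f_-$ for $u$ itself (not for $u^-$), so a strict subsolution must satisfy $\Delta V > f_-$ in $B_1^-(V)$, with the \emph{same} direction of inequality as in the positive phase. Indeed, in $\{P_{M,\omega}<0\}$ one has $V = \beta(1+b\cdot x)P_{M,\omega}$, and the same Leibniz computation gives $\Delta V = 2\beta b\cdot\omega - \beta\,\mathrm{tr}\,M + O(\beta\delta^2\eps)$; hypothesis \eqref{2+} is set up precisely to yield $\Delta V \geq f_-(0) + 2\beta\delta\eps - C\beta\delta^2\eps > f_-(x)$ via \eqref{f+}. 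So both your stated conclusion and your justification for it are wrong, although once the sign is corrected the computation is the intended one.

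Two smaller points. First, several of your error bounds are too pessimistic in a way that would actually break the argument if taken literally: the cross terms $a\cdot Mx$ and $(a\cdot x)\,\mathrm{tr}\,M$ are $O(\delta^2\eps)$ (each factor is $\le\delta\eps^{1/2}$), not $O(\delta^2\eps^{1/2})$, and $\big||\nabla P_{M,\omega}|-1\big| = O(\|M\|^2)=O(\delta^2\eps)$ since $M\omega=0$ forces $|\omega-Mx|^2=1+|Mx|^2$; your claimed comparison $\delta\eps^{1/2}\ll\delta^{1/2}\eps$ would require $\delta\ll\eps$, which is not assumed. Second, in the $\beta=0$ case, verifying that $V$ \emph{satisfies} $|V_\nu^+-\alpha|\leq\delta^{1/2}\eps$ is not the strict-subsolution requirement; condition \eqref{4+} (with its $+\delta^{1/2}\eps$ margin) is there precisely so that $V_\nu^+$ strictly exceeds the upper threshold, which is what forces a contradiction when $V$ touches a solution from below on the free boundary.
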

\begin{proof} Say $\omega = e_n.$ Since $|a|, |b| <1$,  we have that
$$V(x)= \alpha(x_n - \frac 12 x^T M x)(1+a\cdot x), \quad \text{in $B_1^+(V)$}.$$
Thus,
$$\Delta V = -\alpha tr M (1+a \cdot x) +2\alpha (e_n - \frac 1 2 M x)\cdot a \quad \text{in $B_1^+(V)$}.$$
Using assumptions \eqref{f+}-\eqref{Mabsmall}-\eqref{new}-\eqref{1+} we get
$$\Delta V \geq f_+(0)+ \delta \eps - C \delta^2 \eps \geq f_+(x) + \delta \eps  - C \delta^2 \eps > f_+(x)$$ if $\delta$ is chosen universally small.
The computation in the negative phase follows similarly. 

To check the free boundary condition we must verify that, say for $\beta>0,$
\be \label{fbsub}|\nabla V^+| - G(|\nabla V^-|)> 0 \quad \text{on $F(V)$.}\ee
We compute that on $F(V)$, since $M \cdot e_n=0,$
$$|\nabla V^+|=\alpha |e_n - M x | (1+a \cdot x) \geq \alpha (1+ a\cdot x).$$
Similarly, using assumption \eqref{Mabsmall} 
$$|\nabla V^-| = \beta|e_n - M x|(1+ b\cdot x)\leq \beta (1+b\cdot x) + C \delta^2 \eps.$$
Thus, 
$$G(|\nabla V^-|) \leq G(\beta)+ \beta G'(\beta) b \cdot x + C \delta^2 \eps$$
and \eqref{fbsub} is satisfied in view of \eqref{Mabsmall}-\eqref{new}-\eqref{4+}, as long as $\delta$ is small enough. In fact, \eqref{4+} gives that 
$$\alpha (1+ a\cdot x) \geq G(\beta)+ \beta G'(\beta) b \cdot x + \alpha a_n x_n - \beta G'(\beta) b_n x_n + \delta^{1/2}\eps. $$
Using that on $F(V)$ the size of $x_n$ is bounded by $\|M\|$ and from assumptions \eqref{Mabsmall}-\eqref{new}, we conclude that
$$\alpha (1+ a\cdot x) \geq G(\beta)+ \beta G'(\beta) b \cdot x - C \delta^2 \eps +\delta^{1/2}\eps,$$ from which the desired claim follows.

\smallskip

A similar computation holds for $\beta=0.$
\end{proof}

\begin{rem}\label{sub_rem} We can consider the larger class of functions
$$P:= x\cdot \omega + \xi' \cdot x' -  \frac 1 2 x^T M x,$$
and the corresponding $V$'s (for $A \in \R$)
$$V= \alpha(1 + a \cdot (x+Ae_n)) P^+ - \beta (1 + b \cdot (x+Ae_n))P^-.$$ The proposition above remains valid if $|\xi '|, |A|\leq \bar C \delta \eps^{1/2},$ $|b_n||A| \leq \bar C \delta^2 \eps.$\end{rem}



\section{The Improvement of flatness}\label{improvflat}

In this section we prove our main improvement of flatness theorem. 
Let $u$ solve
$$\Delta u^+= f_+ \quad \text{in $B_1^+(u)$}, \quad \Delta u^-= f_- \quad \text{in $B_1^-(u)$}, \quad \text{if $\beta>0$}$$ 
or
$$\Delta u^+= f_+ \quad \text{in $B_1^+(u)$}, \quad \text{if $\beta=0$}$$ 
with $0 \in F(u) \in C^{1,\gamma}$.

Denote by $V=V^{\alpha, \beta}_{M, e_n, a, b} \in \mathcal V_{f_\pm,G}$ with $0 \leq \beta \leq \beta_1, 1=G(0) \leq \alpha \leq \alpha_1:=G(\beta_1).$ In what follows, given $V$, for any function $v$ defined in $B_1$, we will use the notation:
\be\label{tilde_v}\tilde v^{\eps}(x)= \begin{cases}\dfrac{v(x) - \alpha(1+a\cdot x)P_{M,e_n}}{\alpha\eps}, \quad x \in
B_1^+(u) \cup F(u), \\ \ \\ \dfrac{v(x) - \beta (1+ b \cdot x)P_{M,e_n}}{\beta\eps}, \quad x \in
B_1^-(u), \quad \beta>0,\\
0, \quad \quad x \in
B_1^-(u), \quad \beta=0.\ \end{cases}
\ee

\begin{prop}[Improvement of Flatness]\label{IFtp}There exist $\bar \eta, \bar \delta, \bar \eps$ universal, such that 
\begin{enumerate}
\item Two-phase case: $\beta>0,$ if
\begin{equation}\label{flat1tp_e}
 \text{$u$ is $(V,\eps, \bar \delta)$ flat in $B_{1},$} \quad 0<\eps \leq \bar\eps\end{equation}
\begin{equation}\label{flat2tp_e}
|f_+(x) - f_+(0)| \leq \bar \delta \eps, \quad |f_-(x) - f_-(0)| \leq \beta \bar \delta \eps, 
\end{equation}
and
\be\label{fbtp_e} u_\nu^+ = G(u_\nu^-)  \quad \text{on $F(u) \cap B_{2/3}$}
\ee
then $$\text{ $u$ is $(\bar V, \bar\eta^{2+\gamma}\eps, \bar\delta)$ flat in $B_{\bar\eta}$}$$
with $\bar V=V^{\bar \alpha, \bar \beta}_{\bar M, \bar \nu, \bar a, \bar b} \in \mathcal V_{f_\pm,G}$ and $|\beta -\bar \beta| \leq C \eps$, for $C$ universal.

\item One-phase case: $\beta =0$, if
\begin{equation}\label{flat1_e} \text{$u^+$ is $(V,\eps, \bar\delta)$ flat in $B_1$}, \quad 0<\eps \leq \bar\eps,
\end{equation}
\begin{equation}\label{flat2_e}
|f_+(x) - f_+(0)| \leq \bar\delta \eps, 
\ee
and
\be\label{fb_e} |u_\nu^+ - \alpha| \leq \bar\delta^{1/2}  \eps \quad \text{on $F(u) \cap B_{2/3}$}
\ee
then 
\begin{equation}
\text{$u^+$ is $(\bar V, \bar\eta^{2+\gamma}\eps, \bar\delta)$ flat in $B_{\bar\eta}$}
\end{equation}
\end{enumerate}
with $\bar V=V^{\alpha}_{\bar M, \bar \nu, \bar a_{\bar \nu}} \in \mathcal V_{f_+}$.\end{prop}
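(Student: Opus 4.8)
The proof of Proposition \ref{IFtp} follows the classical compactness scheme for improvement of flatness, adapted to the higher-order (quadratic) setting, and I would organize it as follows.

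\textit{Step 1: Harnack inequality and normalization.} The plan is first to invoke an appropriate Harnack-type inequality for the renormalized functions $\tilde u^\eps$ defined in \eqref{tilde_v}. Since $u$ is $(V,\eps,\bar\delta)$ flat, the graph of $u$ (resp. $u^+$) is trapped between the two translates $V(x-\eps e_n)$ and $V(x+\eps e_n)$. Using that $P_{M,e_n}$ has bounded second derivatives and $|a|,|b'|,\|M\|\leq\bar\delta\eps^{1/2}$ small, one shows $\tilde u^\eps$ is trapped between two values differing by $O(1)$, and the Harnack inequality (a higher-order refinement of Theorem 4.1 and 4.4 in \cite{DFS}) lets us improve this oscillation geometrically as we pass to smaller balls, while also controlling the behavior across the free boundary. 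The key point is that on $F(u)$ the transmission condition $u_\nu^+=G(u_\nu^-)$ together with the constraint $\alpha a\cdot\omega^\perp = \beta G'(\beta)b\cdot\omega^\perp$ built into the class $\mathcal V_{f_\pm,G}$ forces the traces of $\tilde u^\eps$ from the two sides to be comparable.

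\textit{Step 2: Compactness and the limiting problem.} Arguing by contradiction, suppose no $\bar\eta,\bar\delta,\bar\eps$ work. Then there are sequences $u_k$, $V_k$, $\eps_k\to 0$, $\bar\delta_k\to 0$ (or $\eps_k$ staying positive with $\bar\delta_k\to 0$) which are $(V_k,\eps_k,\bar\delta_k)$ flat but for which the conclusion fails at every scale $\bar\eta$. By the Harnack-based Hölder estimate from Step 1, the graphs of $\tilde u_k^{\eps_k}$ converge (up to subsequence, via Arzelà–Ascoli after extracting the free boundary limit) uniformly on compact subsets to a limit $\tilde u_\infty$ defined on a half-ball split by the hyperplane $\{x_n=0\}$. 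In the two-phase case $\tilde u_\infty$ solves a \emph{transmission problem}: $\Delta\tilde u_\infty^+=0$ in $\{x_n>0\}$, $\Delta\tilde u_\infty^-=0$ in $\{x_n<0\}$, with a linearized transmission condition on $\{x_n=0\}$ obtained by differentiating $u_\nu^+=G(u_\nu^-)$; in the one-phase case it solves a Neumann problem $\Delta\tilde u_\infty=0$ in $\{x_n>0\}$, $\partial_n\tilde u_\infty=0$ on $\{x_n=0\}$. The $C^{0,\gamma}$ control of $f_\pm$ by $\bar\delta\eps$ (resp. $\beta\bar\delta\eps$) ensures the inhomogeneous terms vanish in the limit at the correct rate.

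\textit{Step 3: Regularity of the limit and construction of $\bar V$.} By interior and boundary regularity for the (constant-coefficient) transmission/Neumann problem, $\tilde u_\infty$ is smooth up to $\{x_n=0\}$, so near the origin it is well approximated by its second-order Taylor polynomial $Q$, which — because of the compatibility relations defining $\mathcal V_{f_\pm,G}$ — is exactly the linearization of the family $V\mapsto V^{\alpha,\beta}_{M,\omega,a,b}$. Reading off the first and second order coefficients of $Q$ produces the perturbed data $\bar M,\bar\nu,\bar a,\bar b$ (and $\bar\alpha=G(\bar\beta)$, with $|\beta-\bar\beta|\leq C\eps$ coming from the size of the correction), hence the improved polynomial $\bar V\in\mathcal V_{f_\pm,G}$. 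One then checks, using Proposition \ref{sub} (and Remark \ref{sub_rem}) to produce strict sub/supersolutions as barriers, that $u$ is genuinely $(\bar V,\bar\eta^{2+\gamma}\eps,\bar\delta)$ flat in $B_{\bar\eta}$ for $\bar\eta$ small but universal, contradicting the assumption. The verification that the new parameters satisfy the smallness bounds in the definition of flatness (especially $|\bar b_n|\leq\bar\delta^2$ and $|\bar b_n|\|\bar M\|\leq\bar\delta^2(\bar\eta^{2+\gamma}\eps)$) requires care and is where the precise form of the flatness definition is used.

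\textit{Main obstacle.} I expect the principal difficulty to be in the two-phase case when $\beta$ is small: the renormalization in \eqref{tilde_v} divides the negative part by $\beta\eps$, so when $\beta$ is tiny relative to $\eps$ the negative phase is essentially degenerate and the transmission condition in the limit becomes delicate (one side sees a genuine Neumann-type condition). Keeping the constants \emph{universal} — independent of how small $\beta$ is — and ensuring the compactness argument still closes (i.e. that the limiting problem is non-degenerate enough to have the needed regularity, and that the extracted $\bar\beta$ stays within $C\eps$ of $\beta$) is the crux. This is precisely the interplay between the two phases flagged in Section \ref{strategy}, and it is handled by treating the borderline regime separately via the degenerate-case machinery (Lemma \ref{fbcondition}, Propositions \ref{first_step}, \ref{it}, \ref{together}) rather than inside this single proposition.
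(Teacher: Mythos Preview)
Your outline is essentially the paper's own compactness argument: contradiction with $\eps_k,\delta_k\to 0$, Harnack inequality (Lemma \ref{HI}) to get H\"older compactness of $\tilde u_k^{\eps_k}$, passage to the linearized transmission problem \eqref{Neumann} (Neumann problem \eqref{Neumann2} when $\beta=0$), and then reading off $\bar V$ from the second-order expansion of the limit. One point is misplaced, though: Proposition \ref{sub} and Remark \ref{sub_rem} are not used in Step 3 as barriers to certify the final flatness --- that step is a direct algebraic computation in which one sets $\alpha_k^*=\alpha_k(1-\eps_k p)$, $N_k^*=M_k+\eps_k N$, etc., and then corrects these to $\bar\alpha_k,\bar\beta_k,\bar N_k,\bar\nu,\bar a_k,\bar b_k$ satisfying the $\mathcal V_{f_\pm,G}$ constraints, with errors of size $O(\eps_k^{3/2})$; rather, Proposition \ref{sub} is the crucial ingredient in Step 2, where one builds the comparison functions $W_k$ (and their translates) that touch $u_k$ and thereby verify the transmission condition for the limit $w$ in the viscosity sense. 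Also, your ``main obstacle'' is slightly off: within this proposition the constants are genuinely universal in $\beta>0$ because dividing by $\beta\eps$ on the negative side is matched by the hypothesis $|f_-(x)-f_-(0)|\le\beta\bar\delta\eps$, so $|\Delta w_k|\le C\delta_k$ on both sides uniformly; the degenerate machinery of Section \ref{dicoD} is invoked only for the separate one-phase scenario, not to rescue small $\beta$ here.
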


\begin{proof} Let $\bar\eta$ be given (to be specified later).

{\bf Step 1.} By contradiction assume that there exist $\eps_k, \delta_k \to 0$ and $u_k, V_k, {f_\pm}_k,G_k$ as above, with $\|G_k\| \leq L, G_k(0)=1,$ for which the assumptions above hold but the conclusion does not. 
Now, let us define the corresponding $\tilde u_k^{\eps_k}$ as in \eqref{tilde_v}. For notational simplicity we call $w_k:=\tilde u_k^{\eps_k}$ 

Then \eqref{flat1tp_e}-\eqref{flat1_e} give,
\begin{equation}\label{flat_tilde**} -2 \leq w_{k}(x) \leq 2
\quad \text{for $x \in B_1$}.
\end{equation}
Up to  a subsequence, $G_k$ converges, locally uniformly in $C^1$, to some $C^1$ function $G_0$, while
$\beta_k\to \tilde \beta$ so that $\alpha_k\to \tilde \alpha
=G_0(\tilde{\beta}).$
Moreover, by Harnack inequality (see Lemma \ref{HI} in the next section) the graphs of $w_k$ converge in the Hausdorff distance to a H\"{o}lder continuous $w.$

\vspace{2mm}

\textbf{Step 2 -- Limiting Solution.} We now show that, say in the case $\beta_k>0$ for all $k$'s, $w$
solves the following linearized problem
\begin{equation}\label{Neumann}
  \begin{cases}
    \Delta w=0 & \text{in $B_{1/2} \cap \{x_n \neq 0\}$}, \\
\ \\
\tilde \alpha w_n^+ - \tilde\beta G'_0(\tilde \beta) w_n^-=0 & \text{on $B_{1/2} \cap \{x_n =0\}$}.
  \end{cases}\end{equation}

One can argue similarly in the case $\beta_k=0$ for all $k$'s, with $w$ satisfying:
\begin{equation}\label{Neumann2}
  \begin{cases}
    \Delta w=0 & \text{in $B_{1/2} \cap \{x_n > 0\}$}, \\
\ \\
w_n =0 & \text{on $B_{1/2} \cap \{x_n =0\}$}.
  \end{cases}\end{equation}

It is easy to check that, from our assumptions,  $$|\Delta  w_{k}| \leq C\delta_k \quad
\text{in $B_1^+(u_k) \cup B^-_1(u_k)$},$$ hence one easily deduces that $w$ is harmonic in $B_{1/2} \cap \{x_n  \neq 0\}$.

Next, we prove that $w$ satisfies the boundary
condition in \eqref{Neumann} in the viscosity sense.

Let $\phi$ be a function of the form
$$\phi(x) = A+ px_n^+- qx_n^- + \xi' \cdot x' +\frac 12  x^T N x$$ with $$N \in S^{n\times n},  tr N=0, \quad A \in \R,$$ and \be\label{FBcond}\tilde \alpha p- \tilde \beta G'_0(\tilde \beta) q>0.\ee

Then we must show that $\phi$ cannot touch $w$ strictly by below at a point $x_0= (x_0', 0) \in B_{1/2}$ (the analogous statement by above follows with a similar argument.)

Suppose that such a $\phi$ exists and let $x_0$ be the touching point. 
Without loss of generality, we can assume that $\phi$ is globally below $w$ (this observation will be used in the Remark \ref{normals_v}.)

We construct now a subsolution to the free boundary problem satisfied by the $u_k$'s. We will use these computations also in the proof of Lemma \ref{HI} in the next section.

Call $$W_k(x)= \tilde \alpha_k(1+\tilde c_k \cdot (x+\eps_k Ae_n))Q^+ - \tilde\beta_k(1+\tilde d_k \cdot (x+\eps_k Ae_n))Q^-,$$
where
$$Q=P_{\tilde M_k,e_n} + \eps_k \xi' \cdot x' + A\eps_k, \quad \tilde M_k =  M_k - \eps_k N,$$
$$\tilde \alpha_k= \alpha_k(1+\eps_k p), \quad \tilde\beta_k=\beta_k(1+\eps_k q)$$
$$\tilde c_k = a_k + \eps_k c, \quad \tilde d_k = b_k +\eps_k d $$
and $c,d$ to be specified later.

From Proposition \ref{sub} and Remark \ref{sub_rem}, $W_k$ is a strict subsolution for $k$ large as long as

$$2 \tilde\alpha_k \tilde c_k \cdot e_n - \tilde \alpha_k tr  M_k \geq {f_{+}}_{k}(0) + 2\delta_k \eps_k$$
$$2 \tilde \beta_k \tilde d_k \cdot e_n - \tilde\beta_k tr M_k \geq {f_-}_{k}(0) + 2\tilde\beta_k\delta_k \eps_k $$
and 
$$ \tilde\alpha_k+ \tilde\alpha_k  \tilde c_k \cdot x' \geq G_k(\tilde\beta_k) + \tilde\beta_k G'_k(\tilde\beta_k) \tilde d_k \cdot x' + \delta_k^{1/2} \eps_k.$$

The first two equations are satisfied if we choose $$c_n = C(p) \delta_k, \quad d_n=C(q)\delta_k.$$
Indeed, say for the first one, we compute,
$$2 \tilde\alpha_k \tilde c_k \cdot e_n - \tilde\alpha_k tr M_k=  {f_+}_k(0) + 2\alpha_k \eps_k c_n + 2\alpha_k \eps_k p a_k \cdot e_n + 2\alpha_k \eps_k^2 p c_n - \eps_k \alpha_k p tr M_k $$ 
$$\geq {f_+}_k(0) + 2\alpha_0 \eps_k c_n + O(p\eps_k^{3/2} \delta_k),$$
and the conclusion follows with an appropriate choice of $C.$ 

We will also choose $c'=d'=0.$ Then, the third equation is satisfied for $k$ large in view of \eqref{FBcond}.

In fact,
$$G_k(\tilde\beta_k) + \tilde\beta_k G'_k(\tilde\beta_k) \tilde d_k \cdot x' \leq G_k(\beta_k) + \beta_k G'_k(\beta_k)q\eps_k+ \beta_k G'_k(\beta_k)b_k \cdot x' + \eps_kO(|q|(\eps_k+ \delta_k))$$
$$= \alpha_k + \alpha_k a_k\cdot x' + \beta_k G'_k(\beta_k)q \eps_k+  \eps_kO(|q|(\eps_k+ \delta_k)).$$
Thus we need,
$$\alpha_k p - \beta_k G'_k(\beta_k)q \geq C(p,q) \delta_k + \delta_k^{1/2}$$
which is satisfied for $k$ large in view of \eqref{FBcond}.

Define now, as in \eqref{tilde_v}, $W^*_k:= \tilde W_k^{\eps_k}$. We observe that $W^*_k$ converges uniformly to $\phi$ on $B_{1/2}.$ 
Indeed, one can easily compute that in $B_1^+(W_k)$
$$W_k(x) - \alpha_k (1+a_k \cdot x)P_{M_k,e_n}= \alpha_k \eps_k(p x_n +\frac 1 2 x^T N x + \xi' \cdot x' + A) + \alpha_k \eps_k O(\delta_k)$$
and similarly, in $B_1^-(W_k)$
$$W_k(x) - \beta_k (1+b_k \cdot x)P_{M_k,e_n}= \beta_k \eps_k(q x_n + \frac 1 2 x^T N x + \xi' \cdot x' + A) + \beta_k \eps_k O(\delta_k).$$

Since $w_k$ converges uniformly to $w$, $W^*_k$ converges uniformly to $\phi$ and $\phi$ touches $w$ strictly by below at $x_0$ we can conclude that there exist a sequence of constants $t_k \to 0$ and of points $x_k \to x_0$ such that the function
$$\psi_k(x)=W_k(x+\eps_k t_k e_n)$$ touches $u_k$ by below at $x_k$. We thus get a contradiction if we prove $\psi_k$ is a strict subsolution to the free boundary problem satisfied by the $u_k$'s. This follows from the fact that $W_k$ is a strict subsolution and the translation in the $e_n$direction only perturbs $A$ into $A+ t_k$.

\medskip

{\bf Step 3.} Since $w_k$ converges uniformly to $w$ and $w(0)=0$ we get that
$$|w_k - \psi(x)| \leq \frac 1 8 \bar\eta^{2+\gamma}, \quad \text{in $B_{\bar\eta}$}$$
with
$$\psi(x) = p x_n^+ - qx_n^- - \xi' \cdot x' - \frac 1 2  x^T Nx + \hat a \cdot x x_n^+ - \hat b \cdot x x_n^- $$
and
$$\tilde \alpha p - \tilde \beta G'_0(\tilde \beta)q=0, \quad tr N=2\hat a_n=2\hat b_n, \quad N e_n=0$$
$$\tilde \alpha \hat a' \cdot x' = \tilde \beta G'_0(\tilde \beta) \hat b' \cdot x'.$$
Thus,
$$|u_k - \alpha_k(1+a_k \cdot x)P_{M_k,e_n} - \alpha_k \eps_k\psi(x)| \leq \frac 1 8 \alpha_k \eps_k \bar\eta^{2+\gamma}, \quad \text{in $B_{\bar\eta} \cap (B^+_1(u_k) \cup F(u_k)):= B^+_F$}$$
and
$$|u_k - \beta_k(1+b_k \cdot x)P_{M_k,e_n} - \beta_k \eps_k\psi(x)| \leq \frac 1 8 \beta_k \eps_k \bar\eta^{2+\gamma}, \quad \text{in $B^-_{\bar\eta}(u_k).$}$$
Hence, since in the region $B^+_F$ we have $x_n \geq -2 \eps$, we conclude that
$$|u_k - \alpha_k(1+a_k \cdot x)P_{M_k,e_n} - \alpha_k \eps_k(p x_n- \xi' \cdot x'  -\frac 1 2 x^TN x + \hat a\cdot x x_n)| \leq \frac 1 4 \alpha_k \eps_k \bar\eta^{2+\gamma}, \quad \text{in $B^+_F$}$$
and similarly
$$|u_k - \beta_k(1+b_k \cdot x)P_{M_k,e_n} - \beta_k \eps_k(q x_n- \xi' \cdot x'  - \frac 1 2 x^TN x + \hat b\cdot x x_n)| \leq \frac 1 4 \beta_k \eps_k \bar\eta^{2+\gamma}, \quad \text{in $B^-_{\bar\eta}(u_k).$}$$

Set
$$\alpha^*_k=\alpha_k(1-\eps_k p), \quad \beta_k^*=\beta_k(1-\eps_k q),$$
$$N^*_k = M_k + \eps_k N, \quad \nu^*_k= e_n + \eps_k \xi, \quad \xi_n=0,$$
and 
$$a^*_k= a_k + \eps_k\hat a, $$
$$b^*_k= b_k + \eps_k\hat b.$$

Then,
$$|u_k - \alpha^*_k(1+a^*_k \cdot x)P_{N^*_k,\nu^*_k}| \leq \frac 1 2 \alpha^*_k \eps_k \bar\eta^{2+\gamma}, \quad \text{in $B^+_F$}$$
and
$$|u_k - \beta^*_k(1+b^*_k \cdot x)P_{N^*_k,\nu^*_k}| \leq \frac 1 2 \beta^*_k \eps_k \bar\eta^{2+\gamma}, \quad \text{in $B^-_{\bar\eta}(u_k)$.}$$

Now choose,
$$\bar \nu_k = \frac{\nu^*_k}{|\nu^*_k|} = e_n + \eps_k \xi + \eps_k^2 \tau, \quad |\tau|\leq C.$$
For notational simplicity, we drop the dependence on $k$ from $\nu^*_k$ and $\bar \nu_k.$

We write 
$$N^*_k= \bar N_k+L_k, \quad \bar N_k \cdot \bar \nu =0.$$
Then, since $M_k  e_n =N e_n=0,$ we get
$$\|L_k\| = O(\eps_k^{3/2}).$$
Moreover set,
$$\bar \beta_k= \beta^*_k \quad \bar \alpha_k= G_k(\bar \beta_k)= \alpha^*_k + O(\eps_k^2)$$
where we have used that $\tilde \alpha p - \tilde \beta G'_0(\tilde \beta)q=0$.

Thus,
$$|u_k - \bar \alpha_k(1+a^*_k \cdot x)P_{\bar N_k,\bar \nu}| \leq \frac 2 3 \bar \alpha_k \eps_k \bar\eta^{2+\gamma}, \quad \text{in $B^+_F$}$$
and
$$|u_k - \bar \beta_k(1+b^*_k \cdot x)P_{\bar N_k,\bar \nu}| \leq \frac 2 3 \bar \beta_k \eps_k \bar\eta^{2+\gamma}, \quad \text{in $B^-_{\bar\eta}(u_k)$.}$$

Let $S_{\bar \nu}:=\{\bar \nu_i\}_{i=1,\ldots n}$ be an orthonormal system containing $\bar \nu.$ Say, $\bar \nu=\bar \nu_n.$
Recall that
$$\bar \nu_n = e_n + \eps \omega, \quad |\omega|\leq C,$$
hence
$$\bar \nu_i = e_i + \eps v, \quad  |v| \leq C, \quad i \neq n$$
Define,
$$\bar a_k= a^*_k + \sum_{i=1}^{n}z_i \bar \nu_i $$
$$\bar b_k= b^*_k + \zeta \bar \nu_n $$
where $z_i, \zeta$ are chosen so that
$$2\bar \alpha_k \bar a_k \cdot \bar \nu_n - \bar \alpha_k tr \bar N_k = {f_+}_k(0)$$
$$2\bar \beta_k \bar b_k \cdot \bar \nu_n - \bar \beta_k tr \bar N_k = {f_-}_k(0), \quad \text{if $\beta_k >0$}$$
and 
$$\bar \alpha_k \bar a_k \cdot \bar \nu_i= \bar \beta_k G'_k(\bar \beta_k) \bar b_k \cdot \bar \nu_i, \quad i \neq n.$$
Unravelling these identities using all of the definitions above and the compatibility conditions for $\psi$ we can estimate that
$$|z_i|, |\zeta| = O(\eps_k^{3/2}).$$

Therefore we conclude that
$$|u_k - \bar \alpha_k(1+\bar a_k \cdot x)P_{\bar N_k,\bar \nu}| \leq \frac 4 5 \bar \alpha_k \eps_k \bar\eta^{2+\gamma}, \quad \text{in $B^+_F$}$$
and
$$|u_k - \bar \beta_k(1+\bar b_k \cdot x)P_{\bar N_k,\bar \nu}| \leq \frac 4 5 \bar \beta_k \eps_k \bar\eta^{2+\gamma}, \quad \text{in $B^-_{\bar\eta}(u_k)$.}$$

Set, $\bar V_k= V^{\bar \alpha_k, \bar \beta_k}_{\bar N_k, \bar \nu, \bar a_k, \bar b_k}$ we conclude that 
$$\bar V(x- \eps_k \bar \nu) \leq u_k(x) \leq \bar V(x + \eps_k \bar \nu) \quad \text{in $B_{\bar\eta}$}$$
and we reach a contradiction as long as 
$$\bar \eta |\bar a_k|, \bar \eta|\bar b_k \cdot \bar \nu^\perp|, \bar \eta\|\bar M_k\| \leq \delta_k (\eps_k \bar \eta^{1+\gamma})^{1/2}$$
and 
$$\bar \eta|\bar b_k \cdot \bar \nu| \leq \delta_k^2, \quad \bar \eta^2|\bar b_k \cdot \bar \nu| \|\bar M_k\| \leq \delta_k^2 (\eps_k \bar \eta^{1+\gamma}).$$

This follows (for $\bar \eta$ possibly smaller) from the initial bounds on $|a_k|, |b_k|, |M_k|$ and the fact that 
$$|a_k-\bar a_k|, |b_k-\bar b_k|, \|M_k-\bar M_k\|, |e_n - \bar \nu| \leq C \eps_k.$$

\end{proof}

\begin{rem}\label{normals_v} We observe that  it is enough for the free boundary condition to be satisfied in the following viscosity sense. At all points where $u$ is touched globally by the positive side in $B_{2/3}$ by a test function with free boundary $\Gamma:= \{x_n= \mathcal P\}$
with $\mathcal P$ a  quadratic polynomial with coefficients of size $1$, then
$$u_\nu^+  \leq G(u_\nu^-) \quad \text{if $\beta>0$}, \quad u_\nu^+ - \alpha \leq \bar\delta^{1/2} \eps \quad \text{if $\beta=0$}$$
and similarly the lower bound is satisfied at all points where $u$ is touched by above on the free boundary by a surface $\Gamma$ as before.

This can be easily seen from the proof (see the conclusion of Step 2 in the proof above.)
\end{rem}

As a consequence we obtain the following two propositions. Let $u$ be as at the beginning of this section and $0 \leq \beta \leq \beta_1, 1 \leq \alpha \leq \alpha_1.$

\begin{prop}[One-phase $C^{2,\gamma}$ improvement of flatness]\label{IF} There exist $\bar\eta, \bar\delta, \bar\lambda$ such that if for $\beta=0$
\begin{equation}\label{flat1}
 \text{$u^+$ is $(V, \lambda^{2+\gamma}, \bar\delta)$ flat in $B_{\lambda}, \lambda \leq \bar\lambda$}
\end{equation} with $V=V^{\alpha}_{M, e_n, a_n} \in \mathcal V_{f_+},$
\begin{equation}\label{flat2}
|f_+(x) - f_+(0)| \leq \bar\delta |x|^\gamma
\end{equation}
and
\be\label{1phase} |u^+_\nu -\alpha| \leq   \bar\delta^{1/2} \lambda^{1+\gamma} \quad \text{on $F(u) \cap B_{2/3 \lambda},$}\ee in the viscosity sense,
then \begin{equation}\label{flat3}
\text{$u^+$ is $(\bar V, (\bar\eta\lambda)^{2+\gamma}, \bar\delta)$ in $B_{\bar\eta\lambda}$}
\end{equation}
with $\bar V= V^\alpha_{\bar M, \bar {\nu}, \bar a_{\bar{\nu}}} \in \mathcal V_{f_+}$.
\end{prop}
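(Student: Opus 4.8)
The plan is to deduce this from the one-phase case of Proposition \ref{IFtp} by a direct rescaling; the constants $\bar\eta,\bar\delta$ will be those of Proposition \ref{IFtp}, and $\bar\lambda$ will be chosen below. Assume \eqref{flat1}, \eqref{flat2}, \eqref{1phase}, and set
$$u_\lambda(x):=\frac{u(\lambda x)}{\lambda},\qquad {f_+}_\lambda(x):=\lambda f_+(\lambda x),\qquad \eps:=\lambda^{1+\gamma}.$$
By the very definition of $(V,\lambda^{2+\gamma},\bar\delta)$ flatness in $B_\lambda$, the function $u_\lambda^+$ is $(V_\lambda,\eps,\bar\delta)$ flat in $B_1$ with $V_\lambda=V^{\alpha}_{\lambda M,e_n,\lambda a_n}$; moreover $V_\lambda\in\mathcal V_{{f_+}_\lambda}$, since the identity $2\alpha a_n-\alpha\,\mathrm{tr}\,M=f_+(0)$ defining $\mathcal V_{f_+}$ scales to $2\alpha(\lambda a_n)-\alpha\,\mathrm{tr}(\lambda M)={f_+}_\lambda(0)$, while $a\cdot\omega^\perp=0$ is scale invariant, and the standing normalizations ($0\in F(u_\lambda)\in C^{1,\gamma}$, $\Delta u_\lambda^+={f_+}_\lambda$) are preserved. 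Choosing $\bar\lambda$ so small that $\lambda\le\bar\lambda$ forces $\eps=\lambda^{1+\gamma}\le\bar\eps$, hypothesis \eqref{flat1_e} holds for $u_\lambda$.

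Next I would verify the remaining hypotheses of Proposition \ref{IFtp}(2) for $u_\lambda$. The H\"older control \eqref{flat2_e} is immediate: for $x\in B_1$,
$$|{f_+}_\lambda(x)-{f_+}_\lambda(0)|=\lambda\,|f_+(\lambda x)-f_+(0)|\le\bar\delta\,\lambda^{1+\gamma}|x|^\gamma\le\bar\delta\,\eps .$$
For the free boundary condition \eqref{fb_e}, note that the inward normal derivative is invariant under $x\mapsto\lambda x$, that $F(u_\lambda)\cap B_{2/3}=\lambda^{-1}\big(F(u)\cap B_{2\lambda/3}\big)$, and that a test function touching $u_\lambda$ at scale $1$ with free boundary $\{x_n=\mathcal P\}$, $\mathcal P$ quadratic with unit-size coefficients, pulls back under $x\mapsto\lambda x$ to a test function touching $u$ at scale $\lambda$ whose free boundary is again of this type (its quadratic coefficients get divided by $\lambda\le 1$). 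Hence \eqref{1phase}, in the viscosity sense of Remark \ref{normals_v}, gives $|(u_\lambda)_\nu^+-\alpha|\le\bar\delta^{1/2}\eps$ on $F(u_\lambda)\cap B_{2/3}$. Proposition \ref{IFtp}(2) then applies and yields some $W=V^{\alpha}_{N,\bar\nu,c}\in\mathcal V_{{f_+}_\lambda}$ such that $u_\lambda^+$ is $(W,\bar\eta^{2+\gamma}\eps,\bar\delta)$ flat in $B_{\bar\eta}$.

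Finally I would undo the rescaling. Since $(u_\lambda)_{\bar\eta}=u_{\bar\eta\lambda}$ and
$$\frac{\bar\eta^{2+\gamma}\eps}{\bar\eta}=\bar\eta^{1+\gamma}\lambda^{1+\gamma}=(\bar\eta\lambda)^{1+\gamma}=\frac{(\bar\eta\lambda)^{2+\gamma}}{\bar\eta\lambda},$$
the conclusion of the previous step reads precisely: $u^+$ is $(\bar V,(\bar\eta\lambda)^{2+\gamma},\bar\delta)$ flat in $B_{\bar\eta\lambda}$, where $\bar V:=V^{\alpha}_{\bar M,\bar\nu,\bar a_{\bar\nu}}$ with $\bar M:=\lambda^{-1}N$ and $\bar a:=\lambda^{-1}c$, chosen so that $\bar V_{\bar\eta\lambda}=W_{\bar\eta}$; the flatness bounds on $\bar M,\bar a$ transfer automatically, since the definition of flatness in $B_r$ absorbs exactly these powers of $\lambda$. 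Dividing the compatibility identity $2\alpha\,c\cdot\bar\nu-\alpha\,\mathrm{tr}\,N={f_+}_\lambda(0)=\lambda f_+(0)$ by $\lambda$, and noting that $\bar a=\lambda^{-1}c$ is parallel to $\bar\nu$ because $c$ is (as $W\in\mathcal V_{{f_+}_\lambda}$), we obtain $\bar V\in\mathcal V_{f_+}$; this is \eqref{flat3}. I do not expect any genuine obstacle here: all of the analytic content is carried by Proposition \ref{IFtp}, and the only care required is the bookkeeping of the powers of $\lambda$ in the flatness parameters together with the (routine) scaling behaviour of the unit-size quadratic test surfaces of Remark \ref{normals_v}.
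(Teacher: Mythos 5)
Your proof is correct and takes exactly the route the paper intends: the paper states Propositions~\ref{IF} and~\ref{IF2tp} as immediate consequences of Proposition~\ref{IFtp} without spelling out the rescaling, and your argument supplies precisely that bookkeeping (choice $\eps=\lambda^{1+\gamma}$, transfer of the $\mathcal V_{f_+}$ compatibility, H\"older bound, viscosity free-boundary condition, and the undoing of the rescale at scale $\bar\eta\lambda$). No gap.
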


\begin{prop}[Two-phase $C^{2,\gamma}$ improvement of flatness]\label{IF2tp} There exist $\bar\eta, \bar\delta, \bar\lambda$ universal, such that if for $\beta>0$
\begin{equation}\label{flat1tp_l}
 \text{$u$ is $(V, \lambda^{2+\gamma}, \bar\delta)$ flat in $B_{\lambda}, \lambda \leq \bar\lambda$}\end{equation}with  $V=V^{\alpha, \beta}_{M, e_n, a, b} \in \mathcal V_{f_\pm, G},$
\begin{equation}\label{flat2tp_l} 
|f_+(x) - f_+(0)| \leq \bar\delta |x|^\gamma, \quad |f_-(x) - f_-(0)| \leq \beta \bar\delta |x|^\gamma
\end{equation}
and
$$u_\nu^+=G(u_\nu^-) \quad \text{on $F(u) \cap B_{2/3\lambda}$}$$
then 
\begin{equation}\label{flat3tp_l}
\text{$u$ is $(\bar V, (\bar\eta\lambda)^{2+\gamma}, \bar\delta)$ in $B_{\bar\eta\lambda}$}
\end{equation}
with $\bar V=V^{\bar \alpha, \bar \beta}_{\bar M, \bar \nu, \bar a, \bar b} \in \mathcal V_{f_\pm,G}$ and $|\beta -\bar \beta| \leq C \lambda^{1+\gamma}$ for $C$ universal.
\end{prop}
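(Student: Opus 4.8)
The plan is to derive Proposition~\ref{IF2tp} from the ``unit scale'' improvement of flatness in Proposition~\ref{IFtp}(1) by a standard rescaling argument. Given $u$ as in \eqref{flat1tp_l}, $(V,\lambda^{2+\gamma},\bar\delta)$ flat in $B_\lambda$ with $V=V^{\alpha,\beta}_{M,e_n,a,b}\in\mathcal V_{f_\pm,G}$, set $\eps=\lambda^{2+\gamma}/\lambda=\lambda^{1+\gamma}$ and consider the rescaling $u_\lambda(x)=u(\lambda x)/\lambda$, which solves \eqref{fbtp} with forcing terms ${f_\pm}_\lambda(x)=\lambda f_\pm(\lambda x)$ and polynomial $V_\lambda=V^{\alpha,\beta}_{\lambda M,e_n,\lambda a,\lambda b}$. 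By definition of $(V,\lambda^{2+\gamma},\bar\delta)$ flatness in $B_\lambda$, the rescaled function $u_\lambda$ is $(V_\lambda,\eps,\bar\delta)$ flat in $B_1$, so that \eqref{flat1tp_e} holds with $\eps=\lambda^{1+\gamma}\le\bar\lambda^{1+\gamma}\le\bar\eps$ provided $\bar\lambda$ is chosen small. One checks that $V_\lambda\in\mathcal V_{{f_\pm}_\lambda,G}$ since the compatibility relations defining $\mathcal V$ are invariant under the rescaling $M\mapsto\lambda M$, $a\mapsto\lambda a$, $b\mapsto\lambda b$, $f_\pm(0)\mapsto\lambda f_\pm(0)$ (indeed $2\alpha(\lambda a)\cdot e_n-\alpha\,\mathrm{tr}(\lambda M)=\lambda f_+(0)={f_+}_\lambda(0)$, and $\alpha=G(\beta)$ is scale invariant).

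Next I would verify the remaining hypotheses of Proposition~\ref{IFtp}(1) for $u_\lambda$. The H\"older bound \eqref{flat2tp_l} rescales to $|{f_+}_\lambda(x)-{f_+}_\lambda(0)|=\lambda|f_+(\lambda x)-f_+(0)|\le\lambda\bar\delta\lambda^\gamma|x|^\gamma=\bar\delta\lambda^{1+\gamma}|x|^\gamma\le\bar\delta\eps$ on $B_1$, which is exactly \eqref{flat2tp_e}; similarly for $f_-$ with the extra factor $\beta$. The free boundary condition $u_\nu^+=G(u_\nu^-)$ on $F(u)\cap B_{2/3\lambda}$ is pointwise scale invariant and yields \eqref{fbtp_e} for $u_\lambda$ on $F(u_\lambda)\cap B_{2/3}$. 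We also need $u_\lambda$ to actually be $(V_\lambda,\eps,\bar\delta)$ flat, i.e. the smallness bounds $|\lambda a|,|\lambda b'|,\|\lambda M\|\le\bar\delta\eps^{1/2}$ and $|\lambda b_n|\le\bar\delta^2$, $|\lambda b_n|\|\lambda M\|\le\bar\delta^2\eps$; these are precisely what ``$(V,\lambda^{2+\gamma},\bar\delta)$ flat in $B_\lambda$'' encodes by the Definition, since by that Definition $u_\lambda$ being $(V_\lambda,\eps,\bar\delta)$ flat in $B_1$ is the \emph{meaning} of the hypothesis. Applying Proposition~\ref{IFtp}(1) we obtain that $u_\lambda$ is $(\bar W,\bar\eta^{2+\gamma}\eps,\bar\delta)$ flat in $B_{\bar\eta}$ with $\bar W=V^{\bar\alpha,\bar\beta}_{\bar M',\bar\nu,\bar a',\bar b'}\in\mathcal V_{{f_\pm}_\lambda,G}$ and $|\beta-\bar\beta|\le C\eps=C\lambda^{1+\gamma}$.

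Finally I would rescale back. Since $u_\lambda$ is $(\bar W,\bar\eta^{2+\gamma}\eps,\bar\delta)$ flat in $B_{\bar\eta}$, writing $u(\lambda x)/\lambda=u_\lambda(x)$ and $(u_\lambda)_{\bar\eta}(x)=u_\lambda(\bar\eta x)/\bar\eta=u(\lambda\bar\eta x)/(\lambda\bar\eta)$, the definition of flatness in a ball gives that $u$ is $(\bar V,\lambda\bar\eta\cdot\bar\eta^{1+\gamma}\eps/\eps\cdots)$ — more precisely $u$ is $((\bar\eta\lambda)^{2+\gamma},\bar\delta)$ flat relative to $\bar V:=\bar W_{1/\lambda}$ scaled appropriately, i.e. $\bar V=V^{\bar\alpha,\bar\beta}_{\bar M,\bar\nu,\bar a,\bar b}$ with $\bar M=\bar M'/\lambda$ etc., and the measured flatness level is $\bar\eta\cdot\bar\eta^{2+\gamma}\eps$ at scale $\bar\eta$, hence $u$ is $(\bar V,(\bar\eta\lambda)^{2+\gamma},\bar\delta)$ flat in $B_{\bar\eta\lambda}$. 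Membership $\bar V\in\mathcal V_{f_\pm,G}$ follows by reversing the scale invariance of the compatibility conditions, and $|\beta-\bar\beta|\le C\lambda^{1+\gamma}\le C$ (and can be upgraded to $C\lambda^{1+\gamma}$ directly). The bookkeeping of the constants $\bar\eta,\bar\delta,\bar\lambda$ is the only delicate point: $\bar\eta,\bar\delta$ are inherited from Proposition~\ref{IFtp}, while $\bar\lambda$ must be chosen small enough that $\bar\lambda^{1+\gamma}\le\bar\eps$, so that every application stays in the admissible range $0<\eps\le\bar\eps$. The main obstacle is purely notational — keeping straight which quantities scale like $\lambda$, which like $\eps=\lambda^{1+\gamma}$, and which are invariant, and checking that all the flatness inequalities in the Definition transform into exactly the hypotheses and conclusions of Proposition~\ref{IFtp}; there is no genuinely new analytic content beyond that already established in Proposition~\ref{IFtp}.
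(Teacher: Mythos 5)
Your proposal is correct and follows the same route the paper intends: the paper simply states that Propositions \ref{IF} and \ref{IF2tp} follow ``as a consequence'' of Proposition \ref{IFtp}, and your rescaling argument supplies exactly the bookkeeping that is left implicit — the identification $\eps=\lambda^{1+\gamma}$, the invariance of the class $\mathcal V_{f_\pm,G}$ under the dilation $(M,a,b,f_\pm(0))\mapsto(\lambda M,\lambda a,\lambda b,\lambda f_\pm(0))$, and the rescaling of the smallness bounds in Definition 2.2.
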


\section{Harnack inequality}\label{hanaineq}

In this section we prove a Harnack type inequality which is the key ingredient in the compactness argument used to prove our improvement of flatness proposition.

Let $V=V^{\alpha, \beta}_{M, e_n, a, b} \in \mathcal V_{f_\pm,G}$ with $0 \leq \beta \leq \beta_1, 1 \leq \alpha \leq \alpha_1.$ Let $u$ solve
$$\Delta u^+= f_+ \quad \text{in $B_1^+(u)$}, \quad \Delta u^-= f_- \quad \text{in $B_1^-(u)$}, \quad \text{if $\beta>0$}$$ 
or
$$\Delta u^+= f_+ \quad \text{in $B_1^+(u)$}, \quad \text{if $\beta=0$}$$ 
with $0 \in F(u) \in C^{1,\gamma}$. 

We need the following key lemma. The free boundary condition in the lemma is assumed to hold in the viscosity sense of Remark \ref{normals_v}.

\begin{lem}\label{HI} There exist $\bar \eps, \bar \delta$  universal such that 
\begin{enumerate}
\item Two-phase case: $\beta >0$, 
if 
\begin{equation}\label{flat1tp_h}
\text{$u$ is $(V,\eps, \bar \delta)$ flat in $B_1$}, \quad 0<\eps \leq \bar\eps, \end{equation}
\begin{equation}\label{flat2tp_h}
|f_+(x) - f_+(0)| \leq \bar\delta \eps, \quad |f_-(x) - f_-(0)| \leq \beta \bar\delta \eps,
\end{equation}
and
\be\label{fbtp_h} u_\nu^+ = G(u_\nu^-)  \quad \text{on $F(u) \cap B_{2/3}$}
\ee
then either
$$u(x) \leq V(x+ (1-\eta)\eps e_n) \quad \text{in $B_\eta$}$$
or
$$u(x) \geq V(x - (1-\eta)\eps e_n) \quad \text{in $B_\eta$}$$
for a small universal $\eta \in (0,1).$\\
\item One-phase case: $\beta =0$, if
\begin{equation}\label{flat1_h}
\text{$u^+$ is $(V, \eps, \bar \delta)$ flat in $B_{1},$} \quad 0<\eps \leq \bar \eps,
\end{equation}
\begin{equation}\label{flat2_h}
|f_+(x) - f_+(0)| \leq \bar\delta \eps, 
\ee
and
\be\label{fb_h} |u_\nu^+ - \alpha| \leq \bar\delta^{1/2}  \eps \quad \text{on $F(u) \cap B_{2/3}$}
\ee
\end{enumerate}
then either
$$u^+(x) \leq V(x+ (1-\eta)\eps e_n) \quad \text{in $B_\eta$}$$
or
$$u^+(x) \geq V(x - (1-\eta)\eps e_n) \quad \text{in $B_\eta$}$$
for a small universal $\eta \in (0,1).$
\end{lem}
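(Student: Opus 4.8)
The statement is a Harnack-type dichotomy, so the natural strategy is the one pioneered by Savin and adapted in \cite{DFS}: improve the two-sided trapping $V(x-\eps\omega)\le u\le V(x+\eps\omega)$ to a trapping with a strictly smaller defect on a slightly smaller ball, by playing off one of the two inequalities. Concretely, I would first reduce (by rescaling and by subtracting the polynomial part, i.e. passing to the normalized functions $\tilde u^{\eps}$ of \eqref{tilde_v}) to the situation where $w:=\tilde u^{\eps}$ is trapped between $-1$ and $1$ near the hyperplane $\{x_n=0\}$, and observe that the claimed conclusion is exactly the statement that $w\le 1-\eta$ somewhere forces $w\le 1-\eta$ on all of $B_\eta$ (and symmetrically from below). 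The two-phase and one-phase cases run in parallel, the only difference being whether one tracks the pair $(\tilde\alpha w_n^+,\tilde\beta G_0'(\tilde\beta)w_n^-)$ across $\{x_n=0\}$ or just the Neumann condition $w_n=0$ in $\{x_n>0\}$.

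\textbf{Main steps.} (1) \emph{Interior point: a measure alternative.} At the point $\bar x=\tfrac{1}{10}e_n$ (well inside the positive phase, at distance $\gtrsim 1$ from the free boundary), compare $u$ with $V(\cdot+\eps e_n)$: since $u\le V(x+\eps e_n)$ everywhere, the nonnegative function $V(x+\eps e_n)-u(x)$ solves an equation with small right-hand side ($O(\delta\eps)$) in a fixed ball around $\bar x$, so by the Harnack inequality for the (linear, $C^{0,\gamma}$-coefficient) operator either this difference is $\ge c\eps$ at $\bar x$ or $u\ge V(x+\eps e_n)-c\eps$ there. Choose whichever alternative holds; say $V(\bar x+\eps e_n)-u(\bar x)\ge c\eps$, so $u$ is genuinely detached from the upper barrier at $\bar x$. (2) \emph{Propagate the detachment to the free boundary via a barrier.} Build a subsolution of the same shape as the $W_k$ in Step 2 of the proof of Proposition \ref{IFtp}: take $W(x)=V_{\tilde M,e_n,\tilde c,\tilde d}^{\tilde\alpha,\tilde\beta}(x+t e_n)$ with $\tilde\alpha=\alpha(1+\eps p)$, $\tilde\beta=\beta(1+\eps q)$, $\tilde M=M-\eps N$ for an explicit nonnegative profile $p x_n^++\tfrac12 x^TNx$ (harmonic, with the right sign of $\tilde\alpha p-\tilde\beta G_0'(\tilde\beta)q$ so that the free boundary inequality of Proposition \ref{sub} holds), calibrated so that $W\le u$ at $\bar x$ using the $c\eps$ gap, $W\le u$ on $\partial B_{1/2}$ trivially from the lower flatness bound, and $W$ is a strict subsolution by Proposition \ref{sub} and Remark \ref{sub_rem}; the free-boundary condition in the viscosity sense of Remark \ref{normals_v} then forbids $W$ from crossing $u$ from below along $F(u)$, so $W\le u$ in all of $B_{1/2}$. (3) \emph{Read off the improved flatness.} On $B_\eta$ the barrier $W$ differs from $V(x-(1-\eta)\eps e_n)$ by the $\eps$-order profile, and because $p>0$ the sign works out to give $u(x)\ge V(x-(1-\eta)\eps e_n)$ on $B_\eta$, which is the second alternative. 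The symmetric choice in Step 1 gives the first alternative with a supersolution instead.

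\textbf{Where the work concentrates.} The delicate point — and the reason Proposition \ref{sub}, Remark \ref{sub_rem} and the "viscosity sense" of Remark \ref{normals_v} were set up so carefully — is Step 2: one must choose the perturbation parameters $p,q$ (and the lower-order pieces $c_n,d_n$ of size $\delta$, with $c'=d'=0$, and the tilt $\xi'$, $A$) so that \emph{all three} strict-subsolution inequalities \eqref{1+}, \eqref{2+}, \eqref{4+} hold simultaneously while $W$ still leaves room to be pushed up to touch $u$ at the interior point; this is exactly the computation already carried out in Step 2 of the proof of Proposition \ref{IFtp}, which is why that argument was flagged there as reusable ("We will use these computations also in the proof of Lemma \ref{HI}"). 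The degenerate one-phase case $\beta=0$ is slightly easier because \eqref{2+} is vacuous and the free-boundary inequality becomes the one-sided condition $|u_\nu^+-\alpha|\le\bar\delta^{1/2}\eps$ on $F(u)$, but it still requires the same barrier geometry; the only genuinely new bookkeeping is to make sure the interior Harnack radius and the size of $p$ can be chosen universal and independent of how small $\beta$ is.
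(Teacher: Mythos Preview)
Your outline follows the paper's approach---midpoint alternative at an interior point, interior Harnack to propagate a gap, then a sliding subsolution of the $W_k$-type from Step~2 of Proposition~\ref{IFtp}, verified via Proposition~\ref{sub}---and you correctly identify that the subsolution construction there is the reusable core. Two points in your sketch need repair, however.

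First, the dichotomy in Step~1 should be the trivial one $u(\bar x)\ge V(\bar x)$ versus $u(\bar x)\le V(\bar x)$; in the first case one applies interior Harnack to the nonnegative function $u-V(\cdot-\eps e_n)$ (not to $V(\cdot+\eps e_n)-u$) to obtain $u-V(\cdot-\eps e_n)\ge c_0\alpha\eps$ on a slab $D'$ sitting at height $x_n\approx r_0$. As you wrote it, ``$u$ detached from the \emph{upper} barrier'' is the wrong branch to pair with a subsolution argument; that branch would call for a supersolution and would yield the \emph{first} alternative, not the second.

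Second, and more substantively, the barrier profile cannot be ``nonnegative''. The paper takes the explicit $\phi_t=t+px_n^+-2nx_n^-+\tfrac12 x^TNx$ (with a specific $N$, $\operatorname{tr}N>0$, and $\alpha p=1+2n\beta G'(\beta)$) engineered so that $\phi_{1/8}<-1/16$ on the lateral and bottom faces of a box $D=\{|x'|\le 1/2,\ -1/2\le x_n\le r_0\}$ while $\phi_{1/8}\ge 1/16$ on $B_\eta$. This sign change is exactly what makes ``$W\le u$ on the outer boundary from the lower flatness bound'' valid on those faces; without it the claim fails. On the remaining face $\{x_n=r_0\}$ the profile is positive (but bounded), and it is \emph{there}---not at the single point $\bar x$---that the interior Harnack gap is invoked to keep $W\le u$. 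The paper then slides $W_t$ up from $t\ll 0$ and rules out a first touching point in $\bar D$ by contradiction on its location. Your direct comparison ``$W\le u$ on $\partial B_{1/2}$, hence in $B_{1/2}$'' does not go through without this sign-changing geometry and the sliding mechanism.
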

\begin{proof} To fix ideas let $\beta>0$. The case $\beta=0$ follows in the same way.

Let $\bar x =\frac 1 5 e_n$ and assume that
\be\label{u-p>ep2tp} u(\bar x) \geq V(\bar x)>0.\ee
We prove that the second statement holds.

Define:
\be \label{phi}
\phi_t(x):= t+ p x_n^+ - 2n x_n^- + \frac 1 2 x^T N x
\ee
where $N \in \mathcal S^{n\times n}$
$$N_{ii}=- 2, \quad \text{if $i, j=1, \ldots n-1$}$$
$$N_{1,j}=0 \quad \text{if $j=1,\ldots n-1$}$$
$$N_{nn}=4n.$$
and
\be\alpha p =1+ 2n\beta G'(\beta). \ee

We show that there  is a constant $ r_0 \leq 1/16$ ($r_0$ universal) such that 
\be\label{negative} \phi_{1/8} <-1/16 \quad \text{on $-1/2 \leq x_n \leq r_0, |x'| =1/2$ and on $x_n=-1/2, |x'| \leq 1/2$.}\ee Indeed, on the first region above, when $x_n > 0$ we have:
$$\phi_{1/8}(x)= 1/8+ 2n x_n^2 - 1/4 + p x_n \leq -1/8+ (2n + p) r_0  < -1/16$$
as long as
$$(2n+p) r_0 \leq 1/16.$$
When $-1/2 \leq x_n \leq 0$ and $|x'|=1/2$
$$\phi_{1/8}(x)= 1/8+ 2n x_n(x_n+1) - 1/4  < -1/16.$$
Finally, on $x_n=-1/2$ and $|x'|\leq 1/2$ we have
$$\phi_{1/8}(x)= 1/8 - |x'|^2  - n/2 < -1/16.$$

Finally notice that ($\eta$ universal ),
\be \label{big} \phi_{1/8} \geq \frac{1}{16} \quad \text{in $B_\eta$}.\ee

Call
$$D'':=\{|x'| \leq 2/3, r_0/2 \leq x_n \leq 2/5\}$$
and
$$D' :=\{|x'| \leq 1/2, r_0/3 \leq x_n \leq 1/5\}.$$
Finally,
$$D :=\{|x'| \leq 1/2, -1/2 \leq x_n \leq r_0\}.$$
Notice that (for $\eps$ small)
\be\label{inclusion1} D''\subset B_1^+(V^\eps) \subset B_1^+(u). \ee 

We have that  $v(x):=u(x) -V(x-\eps e_n) \geq
0$ and because of \eqref{flat2tp_h} and the sizes of the coefficients of $V$, we have $\Delta v \geq -\bar\delta \eps - \bar c \alpha \bar \delta^2 \eps$ in $D''$. Thus we can apply Harnack inequality
to obtain
$$ v \geq cv(\bar x) - C(\bar\delta \eps + \bar c \alpha \bar \delta^2\eps) \quad \text{in $D'$}. $$
From \eqref{u-p>ep2tp} we conclude that (for $\bar\delta$ small enough)
\be\label{u-p>cep} u(x)
- V(x-\eps e_n)\geq c\alpha \eps - C \bar\delta\eps  - \bar c \alpha \bar\delta^2\eps \geq \alpha c_0\eps \quad \text{in $D'$}. \ee

Now set,
$$\tilde \alpha= \alpha(1+\eps c_0 p), \quad \tilde \beta=\beta(1+\eps 2 n c_0 ), \quad \tilde M= M - \eps c_0N, \quad \tilde c= a+\eps c, \quad \tilde d= b+ \eps d$$
with 
$$c_i= d_i =0 \quad \forall i=1,\ldots n-1, \quad c_n=d_n=O(\bar \delta),$$
and call 
$$W_t :=\tilde \alpha(1+ \tilde c \cdot (x-\eps e_n+ t\eps e_n))(P_{\tilde M,e_n} -\eps+ t\eps)^+- \tilde\beta(1+ \tilde d \cdot (x- \eps e_n+t\eps e_n))(P_{\tilde M,e_n}-\eps + t\eps)^-.$$
Now, the same computation as in Step 2 of Proposition \ref{IFtp} guarantees that $\tilde W^\eps_t$ (defined as in \eqref{tilde_v}) converges uniformly to $-1+c_0 \phi_t$ as $\eps \to 0$. Clearly, $\tilde V^\eps(x-\eps e_n)= -1 + O(\bar \delta^{1/2}\eps)$. Since for $\bar t <<0$ we have $c_0 \phi_{\bar t} \leq -2$ in $D$, we conclude that 
$$ W_{\bar t} \leq V(x-\eps e_n) \leq u \quad \text{on $D$.}$$
Let $\bar s $ be the largest $t$ such that 
$$ W_t \leq u \quad \text{on $D$.}$$
We want to show that $ \bar s \geq 1/8.$ Then, in view of the uniform convergence of  $\tilde W^\eps_{\bar s}$ to $c_0 \phi_{\bar s}$ and the bound \eqref{big} we get 
$$u(x) \geq W_{\bar s} \geq V(x- (1-\bar c)\eps e_n) \quad \text{in $B_{\eta}.$}$$

Assume that $\bar s< 1/8.$ Then the first touching point $\tilde x$ of $u$ and $W_{\bar s}$ in $D$, occurs on $x_n=r_0.$ Indeed, as shown in Step 2 of Proposition \ref{IFtp}, $W_{\bar s}$ is a strict subsolution to \eqref{fbtp} which lies below $u$ in $D$. Thus the touching point can only occur on $\p D.$ 

By \eqref{negative} and the uniform convergence of the  $\tilde W^\eps_{\bar s}$ we have $W_{\bar s} < V^\eps \leq  u$ on $\p D \setminus \{x_n=r_0.\}$ Thus, again by the uniform convergence of the  $\tilde W^\eps_{\bar s}$ and the fact that $\phi_{1/8} \leq 1/2$ on $x_n=r_0$, we get
$$u(\tilde  x)= W^\eps_{\bar s }(\tilde x) < V^\eps (\tilde x) + \alpha c_0 \eps $$
which contradicts \eqref{u-p>cep}.

\end{proof}

\begin{cor} \label{corollary}Let $u$ be as in Lemma $\ref{HI}$. Then  the modulus of continuity of $\tilde
u^\eps$ is H\"older outside $[0, \sigma(\eps)]$ with $\sigma(\eps) \to 0$ as $\eps \to 0.$
\end{cor}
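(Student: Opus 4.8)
The corollary asserts that the modulus of continuity of $\tilde u^\eps$ is Hölder outside a small interval $[0,\sigma(\eps)]$ — i.e., away from a shrinking neighborhood of the free boundary level set, the renormalized function is uniformly Hölder. The natural approach is to iterate the dichotomy in Lemma \ref{HI} at dyadic scales, exactly in the spirit of the classical derivation of Hölder continuity from an oscillation-decay (Harnack-type) estimate.

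\medskip

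\textbf{Plan.} First I would fix a point $x_0$ in the relevant region and apply Lemma \ref{HI} to the rescaled solution $u_{r}(x) = u(x_0 + r x)/r$ (translated so that the nearest free boundary point is at the origin, or working directly in $B_\eta(x_0)$ when $x_0$ is interior to one phase). At each scale $r = \eta^k$, the hypotheses \eqref{flat1tp_h}–\eqref{fbtp_h} are inherited (the flatness parameter $\eps$ becomes $(1-\eta)\eps$ after one step, and the smallness conditions on $f_\pm$ and the free boundary condition rescale correctly), so Lemma \ref{HI} applies and the ``flatness width'' of $u$ around $V$ improves geometrically: passing from $(V,\eps)$-flatness in $B_{\eta^k}$ to $(V, (1-\eta)\eps)$-flatness in $B_{\eta^{k+1}}$. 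Unravelling this in terms of $\tilde u^\eps$, the oscillation of $\tilde u^\eps$ over $B_{\eta^{k}}(x_0)$ is controlled by $C(1-\eta)^k \sim C (\eta^k)^{\gamma_0}$ for $\gamma_0 = \log(1-\eta)/\log\eta$ small and universal. Summing the telescoping differences of the approximating polynomials $V_k$ (whose coefficients move by at most $C\eps_k$ per step, a geometric series) shows that $\tilde u^\eps$ is approximated at $x_0$, at scale $\eta^k$, by a fixed affine-type profile with error $C(\eta^k)^{\gamma_0}$, which is the pointwise $C^{0,\gamma_0}$ estimate at $x_0$.

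\medskip

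\textbf{The role of $\sigma(\eps)$.} The iteration can only be run as long as the rescaled flatness parameter stays below the universal threshold $\bar\eps$ of Lemma \ref{HI}; more importantly, the dichotomy argument requires $x_0$ to be at distance at least comparable to the current scale from the zero level of $V$ (and from the free boundary), since near $\{x_n = 0\}$ the function $\tilde u^\eps$ genuinely need not be regular — there the positive and negative parts meet and the normalization by $\alpha\eps$ versus $\beta\eps$ creates the jump. Concretely: once the scale $\eta^k$ drops below a multiple of $\eps$, the ball $B_{\eta^k}(x_0)$ is trapped in the ``collar'' $\{|x_n| \lesssim \eps\}$ and one can no longer guarantee $B_{\eta^k}(x_0)$ lies on one definite side, so the iteration halts. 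This forces the excluded set to be an interval of length $\sigma(\eps) = C\eps$ (or more precisely $C\eps$ times a universal constant), and $\sigma(\eps) \to 0$ as $\eps \to 0$ as claimed. Outside $[0,\sigma(\eps)]$ — equivalently, for points $x_0$ with $|P_{M,e_n}(x_0)| \geq \sigma(\eps)$, where $\tilde u^\eps$ is ``well away'' from the transition — the iteration runs for all scales down to $\sigma(\eps)$ and then the interior gradient/Hölder estimates for $\Delta u^\pm = f_\pm$ take over at the final scale, yielding the uniform Hölder bound with universal exponent and constant.

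\medskip

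\textbf{Main obstacle.} The delicate point is bookkeeping the two-sided normalization: $\tilde u^\eps$ is defined with different denominators ($\alpha\eps$ on the positive side, $\beta\eps$ on the negative side) and with different polynomial subtractions, so when I translate the conclusion of Lemma \ref{HI} (which is a statement about $u$ being trapped between translates of the \emph{same} $V$) into an oscillation estimate for $\tilde u^\eps$, I must check that the geometric improvement survives on both sides simultaneously and that the Hausdorff convergence of the graphs of $w_k$ used in Proposition \ref{IFtp} is exactly what the dyadic iteration produces here. A second, more technical, obstacle is ensuring the smallness hypotheses on $f_\pm$ and the viscosity free boundary condition of Remark \ref{normals_v} are stable under the rescaling and translation to $x_0$ — this is routine but must be tracked so that the constant $\bar\delta$ does not degrade along the iteration. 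Everything else is the standard ``Harnack $\Rightarrow$ Hölder'' machinery.
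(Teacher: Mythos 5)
The paper gives no proof of this corollary---it is the standard ``Harnack implies H\"older'' consequence of Lemma \ref{HI}, following the pattern of the earlier $C^{1,\gamma}$ work \cite{DFS}---and your proposal correctly identifies that argument: iterate the dichotomy of Lemma \ref{HI} at geometric scales $\eta^k$ centered at a free boundary point, observe that the trapping width contracts by a factor $1-\eta/2$ per step while the rescaled flatness parameter $\eps_k \sim \eps\bigl((1-\eta/2)/\eta\bigr)^k$ grows, and halt once $\eps_k$ reaches $\bar\eps$, which happens at scale $\eta^{k}\sim\eps$, producing $\sigma(\eps)\sim\eps$ and a small universal H\"older exponent. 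Two small imprecisions are worth noting but do not affect the outcome: Lemma \ref{HI} improves only \emph{one} of the two bounding translates, so the one-step contraction of the width is $1-\eta/2$ (not $1-\eta$), hence $\gamma_0 = \log(1-\eta/2)/\log\eta$; and in the Harnack iteration the approximating polynomial $V$ does not change its coefficients---it is only translated along $e_n$ by a geometrically summable amount---so the ``telescoping of the $V_k$'' you mention is just bookkeeping the accumulated recentering, not a genuine change of polynomial as in the improvement-of-flatness step.
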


\section{The dichotomy}\label{dicoD}

Throughout this section, $u$ is a (Lipschitz) solution to \eqref{fbtp}.

We introduce the class of functions $\mathcal Q_{f_-}$ defined as
$$Q_{p,q,\omega, M} = (x \cdot \omega - \frac 1 2 x^T M x)(p+ q \cdot x) - \frac 1 2 (f_-(0)+p tr M) (x \cdot \omega)^2,$$
with $p \in \R, q \in \R^n, M \in S^{n\times n},$ such that 
$$q \cdot \omega =0 , \quad M \omega = 0, \quad \|M\|\leq 1.$$

In the degenerate case, we use these functions to approximate $u^-$ in a $C^{2,\gamma}$ fashion at a smaller and smaller scale. The goal is to reach a scale $\rho$ where $u^-$ is trapped between two translations of $Q$ of size $\rho^{2+\gamma^*}.$ This would guarantee that the full $u$ is $(V, \rho^{2+\gamma^*}, \bar \delta)$ flat at that scale, which allows us to apply the improvement of flatness result of the non-degenerate case.

Initially $u^+$ is $(V, \bar\lambda^{2+\gamma}, \bar \delta)$ flat while $u^-$ is $C^{2,\gamma}$ close to the configuration $Q_{0,0, e_n, 0}$. This closeness improves at a $C^{2,\gamma}$ rate until (possibly) the slope $p$ of the approximating polynomial $Q=Q_{p,q,\omega,M}$ is no longer zero, say at scale $\lambda.$ However, to obtain the desired flatness of $u$, we need to reach a scale $\rho=\lambda r$ for $r << \lambda^{1/\gamma}$. It is necessary to exploit also the information that the flatness of $u^+$ is in fact improving at a $C^2$ rate for a little while,  hence allowing us to continue the iteration on the negative side and obtain that $u^-$ is $C^{2,\gamma}$ close to a configuration $Q$ at an even smaller scale. As already pointed out, in the case of the $C^{1,\gamma}$ estimates of \cite{DFS} this issue was not present, as in the degenerate case it was sufficient to reach a scale $\rho= \lambda r$ with $r=\lambda^{\gamma/2}.$ This makes the $C^{2,\gamma}$ proof much more sophisticated and technically involved.

We are ready to start developing the tools to apply the strategy described above.

The next lemma relates the closeness of $u^-$ to a function in the class $\mathcal Q_{f_-}$ with a one-phase free boundary condition for $u^+_\nu$ of the type in Proposition \ref{IFtp}-(ii) (one-phase improvement of flatness). The condition is satisfied in the viscosity sense of Remark \ref{normals_v}.
The proof relies on a variant of the pointwise $C^{1,\gamma}$ estimate, which we describe in  Appendix A. 
We also, need the following easy remark.
\begin{rem}\label{Qsize} We remark that
$$|Q_{p,q,\omega, M}- Q_{p,q,\omega,0}|\leq C|p|r^2+|q|r^3 \quad \text{in $B_r$};$$
$$|Q_{p,q,e_n,0}-Q_{p, \tilde q, \omega, 0}|\leq (|p|r+ (2|q|+|f_-(0)|)r^2)|e_n - \omega| \quad \text{in $B_r$};$$ where $\tilde q$ is a rotation of $q$ by the angle between $e_n$ and $\omega.$
\end{rem}

Let $V= V^{\alpha}_{M, e_n, a_n}$ and $Q= Q_{p, q, e_n, M}$ with $\alpha=G(|p|).$

\begin{lem}\label{fbcondition} Let $u^+$ be $(V, r^2 \lambda^{2+\gamma}, \bar \delta)$ flat in $B_{r \lambda},$
$$|f_-(x) - f_-(0)| \leq \bar \delta |x|^\gamma, \quad \|f_-\|_{\infty} \leq \bar \delta
$$
and 
$$|u^- - Q| \leq \bar \delta^{1/2} (r \lambda)^{2+\gamma}, \quad \text{in $B^-_{r\lambda}(u)$,}$$  for $r \leq 1$, with $p \leq 0$, $|p| \sim(\bar \delta^{1/2} \lambda^{1+\gamma}), |q| =O(\bar \delta^{1/2}\lambda^\gamma),$ and
\be\label{bound_r}\bar \delta^{1/2} r^\gamma \geq  2 \lambda^{1+\gamma}.\ee
Then,
\be\label{link1} |u^+_\nu - \alpha| \leq  \bar C \bar \delta^{1/2} r \lambda^{1+\gamma} \quad \text{on $F(u) \cap B_{r\lambda}$}\ee
in the viscosity sense, with $\bar C$ universal.
\end{lem}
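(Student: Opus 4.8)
The plan is to bootstrap from the closeness of $u^-$ to $Q$ and the flatness of $u^+$ in order to control $u^+_\nu$ on $F(u)$ through the free boundary condition $u^+_\nu=G(u^-_\nu)$. First I would observe that at every point $x_0\in F(u)\cap B_{r\lambda}$, the $C^{1,\gamma}$ regularity of the free boundary and the hypothesis $|u^- - Q|\le \bar\delta^{1/2}(r\lambda)^{2+\gamma}$ in $B^-_{r\lambda}(u)$ allow one to read off $u^-_\nu(x_0)$ as a perturbation of the normal derivative of $Q$ at $x_0$. Since $Q=Q_{p,q,e_n,M}$ has $\nabla Q(0)=p\,e_n$ (for $|p|$ small and $M,q$ of controlled size), one expects $u^-_\nu(x_0)=|p|+(\text{error})$, where the error must be estimated by $\bar C\bar\delta^{1/2}r\lambda^{1+\gamma}$ — note the gain of a factor $r$ over the naive size $\bar\delta^{1/2}\lambda^{1+\gamma}$ of the coefficients. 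Once this is done, applying $G$ and using $\alpha=G(|p|)$ together with $G\in C^2$ yields $|u^+_\nu - \alpha|\le G'(|p|)\cdot(\text{error})+O((\text{error})^2)$, which is of the desired order since $G'$ is bounded on $[0,L+1]$.

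The technical heart is the derivative estimate on $F(u)$, and here the refined pointwise $C^{1,\gamma}$ estimate of Appendix A is the crucial tool. I would argue as follows: rescale to the ball $B_{r\lambda}$, so that $u^-$ satisfies $\Delta u^- = f_-$ with $\|f_-\|_\infty\le\bar\delta$ and oscillation $|f_-(x)-f_-(0)|\le\bar\delta|x|^\gamma$, and is within $\bar\delta^{1/2}(r\lambda)^{2+\gamma}$ of the explicit second-order polynomial $Q$. The function $w:=u^- - Q$ then solves an equation with right-hand side $f_- - \Delta Q = f_- - f_-(0) + (\text{lower order in the coefficients of }Q)$, which is $O(\bar\delta)$ in a suitable H\"older-scaled sense, while $w$ itself is $O(\bar\delta^{1/2}(r\lambda)^{2+\gamma})$ in sup norm on $B^-_{r\lambda}(u)$. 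The pointwise estimate at a free boundary point $x_0$ (using that $F(u)$ is $C^{1,\gamma}$ with small norm, hence $B^-_{r\lambda}(u)$ is a nice domain near $x_0$ on the correct scale) gives $|\nabla w(x_0)|\lesssim \bar\delta^{1/2}r\lambda^{1+\gamma} + \bar\delta (r\lambda)$; the first term dominates precisely because of the hypothesis $|p|\sim\bar\delta^{1/2}\lambda^{1+\gamma}$ and the balancing condition \eqref{bound_r}, which is what forces $\bar\delta^{1/2}r^\gamma\ge 2\lambda^{1+\gamma}$ into play. Combining $\nabla u^-(x_0)=\nabla Q(x_0)+\nabla w(x_0)$ with the fact that $|\nabla Q(x_0) - p\,e_n|$ is itself controlled (using $\|M\|\le 1$, $|q|=O(\bar\delta^{1/2}\lambda^\gamma)$, and $|x_0|\le r\lambda$, together with Remark \ref{Qsize}), and then taking the normal component along the inward normal $\nu$ to $B^-_{r\lambda}(u)$ at $x_0$ (which differs from $e_n$ by at most the $C^{1,\gamma}$ flatness, of size $\le r^2\lambda^{2+\gamma}$ by the flatness of $u^+$), yields $\big|u^-_\nu(x_0) - |p|\big| \le \bar C\bar\delta^{1/2}r\lambda^{1+\gamma}$.

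The remaining step is purely to translate this into \eqref{link1}: since $u$ is a classical solution with $F(u)\in C^{1,\gamma}$, the pointwise condition $u^+_\nu=G(u^-_\nu)$ holds on $F(u)\cap B_{r\lambda}$, and a first-order Taylor expansion of $G$ about $|p|$, using $\alpha=G(|p|)$ and $\|G\|_{C^2([0,L+1])}$ universal, gives $|u^+_\nu-\alpha|=|G(u^-_\nu)-G(|p|)|\le \|G'\|_\infty\,|u^-_\nu - |p|| + C|u^-_\nu-|p||^2\le \bar C\bar\delta^{1/2}r\lambda^{1+\gamma}$, after absorbing the quadratic term (which is of higher order in $\bar\delta^{1/2}$). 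Finally, Remark \ref{normals_v} is invoked to record that this estimate, obtained pointwise, is exactly the viscosity-sense statement needed downstream. I expect the main obstacle to be the careful scaled application of the Appendix A estimate on the domain $B^-_{r\lambda}(u)$ — in particular tracking how the $C^{1,\gamma}$ character of the free boundary enters the constant, ensuring that the error genuinely scales like $r\lambda^{1+\gamma}$ and not like $\lambda^{1+\gamma}$, and verifying that no term coming from $\|f_-\|_\infty\le\bar\delta$ (as opposed to the oscillation bound) spoils the estimate; the condition \eqref{bound_r} is evidently designed precisely so that these competing scales line up.
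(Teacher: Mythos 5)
Your overall strategy matches the paper's: estimate $\nabla u^{-}$ at a free boundary point via a pointwise boundary gradient estimate for $w=u^{-}-Q$ (using the refined estimate of Appendix A), then convert through the classical free boundary relation $u_\nu^{+}=G(u_\nu^{-})$ by Taylor expanding $G$ about $|p|$. But there are genuine gaps in the execution.

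First, you never introduce the touching quadratic test surface $x_n=\mathcal P(x)$ (with coefficients of size $1$) that is the engine of the whole argument. The conclusion \eqref{link1} is interpreted in the viscosity sense of Remark \ref{normals_v}: one proves a \emph{one-sided} inequality at each point where $F(u)$ is touched from one side by such a surface. More importantly, this touching is exactly what provides the one-sided quadratic bound $g\leq C(r\lambda)|x-y_0|^2$ on the free boundary graph after rescaling, which is what makes the domain fit the asymmetric hypothesis $-|x|^{1+\alpha}\leq g\leq \sigma|x|^{1+\alpha}$ of Theorem \ref{refine}. Generic $C^{1,\gamma}$ regularity of $F(u)$ does not give you this; Theorem \ref{refine} is intrinsically one-sided and returns $\partial_n u(0)\leq C$, not $|\partial_n u(0)|\leq C$. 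Your plan claims a two-sided estimate on $|\nabla w(x_0)|$, which is not what the tool produces nor what is needed.

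Second, your scaling heuristic $|\nabla w(x_0)|\lesssim \bar\delta^{1/2}r\lambda^{1+\gamma}+\bar\delta(r\lambda)$ and the claim that the first term dominates are not right. In the regime the paper operates in, one has $\bar\lambda\ll\bar\delta$ (see the proofs of Propositions \ref{first_step} and \ref{it}), so $\bar\delta(r\lambda)\gtrsim\bar\delta^{1/2}r\lambda^{1+\gamma}$ and the $\bar\delta(r\lambda)$ term would \emph{not} be absorbed. The correct bookkeeping is as in the paper: with $v=\bar\delta^{-1/2}(r\lambda)^{-(2+\gamma)}(u^{-}_{r\lambda}-Q_{r\lambda})$ one has $|v|\leq1$, $|\Delta v|\leq 2\bar\delta^{1/2}$, and the boundary datum of $v$ on $F(u_{r\lambda})$ is the rescaled $\bar Q_{r\lambda}$, whose gradient and Hessian are of size $O(r^{-\gamma})$ and (in the $\nu^{-}\nu^{-}$ component) $O(r^{-\gamma}/(r\lambda))$. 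It is this $O(r^{-\gamma})$ size of the boundary data (not a balance of $\bar\delta^{1/2}$ against $\bar\delta$) that produces the claim $v_\nu(y_0)\leq Cr^{-\gamma}$, which upon undoing the scaling gives exactly $(\nabla u^{-}(x_0)-\nabla Q(x_0))\cdot\nu\leq C\bar\delta^{1/2}r\lambda^{1+\gamma}$. You would need to carry out the explicit computation of $\nabla Q$ and $D^2Q$ in $B_{r\lambda}\cap\{|x_n|<(r\lambda)^2\}$ using $|p|\sim\bar\delta^{1/2}\lambda^{1+\gamma}$, $|q|=O(\bar\delta^{1/2}\lambda^\gamma)$ and the flatness of $F(u)$ to verify the hypotheses of Theorem \ref{refine}, including the asymmetric condition on $\partial_n\varphi(0)$ (which is where the sign assumption $p\leq0$ matters).

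Third, the role of \eqref{bound_r} is not the vague scale-balancing you describe: it is used specifically to show that the boundary values of $v$ on $F(u_{r\lambda})$ are small, namely $|v|\leq\bar\delta^{1/2}$, by bounding $|Q|$ on $F(u)\cap B_{r\lambda}$ using $|x_n|\lesssim r^2\lambda^{2+\gamma}$ together with the sizes of $p,q,\|M\|,f_-(0)$. Without this, the boundary data of $v$ at $y_0$ is not of the right size for the $C^{1,\alpha}$-type estimate. Finally, a minor slip: you write that $\nu$ differs from $e_n$ by ``at most the $C^{1,\gamma}$ flatness, of size $\leq r^2\lambda^{2+\gamma}$''; that is the $L^\infty$ deviation of $F(u)$ from a plane, not the angular deviation of the normal. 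The relevant bound (used in the paper) is $|\nu-e_n|\leq\lambda r$.
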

\begin{proof} 
 Let $\mathcal P(x)$ be a quadratic polynomial and let $x_n=\mathcal P(x)$ touch $F(u)$ by the negative side at $x_0 \in F(u) \cap B_{r\lambda/2}$, with the coefficients of $\mathcal P$ of size 1. Let $\nu$ be the normal to $F(u)$ at $x_0$ (pointing toward the positive phase). Then, we want to show that 
\be
u_\nu^+(x_0) \geq \alpha -\bar \delta^{1/2} r \lambda^{1+\gamma}
\ee
Denote $\nu^-=-\nu.$
Since (for some $t$), $$u_\nu^+(x_0) = G(u_{\nu}^-(x_0)) = G(|p|) + G'(t) (\nabla u^-(x_0) \cdot \nu^- - |p|),$$
it suffices to show that
\be\label{link_again} \nabla u^-(x_0) \cdot \nu \leq p+ C \bar \delta^{1/2} r \lambda^{1+\gamma}.\ee

Let $$u_{r\lambda}(x):= u(r\lambda x), \quad Q_{r\lambda}(x):=Q(r\lambda x), \quad \mathcal P_{r\lambda}(x):=\mathcal P(r\lambda x),$$  and set \be\label{defv} v(x): =  \bar \delta^{-1/2}(r\lambda)^{-(2+\gamma)}(u^-_{r\lambda} - Q_{r\lambda})(x), \quad x \in B_1,\ee
Then,
\be\label{propv}|v| \leq 1, \quad |\Delta v| \leq 2\bar \delta^{1/2} \quad \text{in $B_1^-(u_{r\lambda})$}.\ee
Moreover, since $r$ satisfies \eqref{bound_r}, using the estimates for $|p|, |q|$ and the flatness of the free boundary, we obtain that
$$|v| \leq \bar \delta^{1/2} \quad \text{on $F(u_{r\lambda})$.}$$
\smallskip

We claim that $$v_{\nu} (y_0) \leq Cr^{-\gamma}, \quad y_0=\frac{x_0}{r\lambda}.$$

To prove the claim, we wish to apply Theorem \ref{refine} in the Appendix to $-v$. Indeed, assume $F(u_{r\lambda})$ is the graph of a function $g$ in the $\nu^-$ direction. Then, since $x_n= \frac{1}{r\lambda} \mathcal P_{r\lambda}$ touches $F(u_{r\lambda})$ at $y_0$ by the negative side we conclude that
$$g \leq C (r\lambda) |x-y_0|^2.$$
Thus, it suffices to prove that at the point $y_0$, for any $\nu^\perp$ perpendicular to $\nu^-$ we have
\begin{eqnarray}\label{grad}&|\nabla \bar Q_{r\lambda} \cdot \nu^\perp| = O(r^{-\gamma}), \quad -O(r^{-\gamma}) \leq \nabla \bar Q_{r\lambda} \cdot \nu^- \leq  O(\frac{r^{-\gamma}}{r\lambda})\\
\label{grad2}&|D^2\bar Q_{r\lambda}(\nu, \nu^\perp)|= O(r^{-\gamma}), \quad |D^2 \bar Q_{r\lambda}(\nu^-,\nu^-)| = O(\frac{r^{-\gamma}}{r\lambda}),\end{eqnarray}
where we denoted 
$$\bar Q_{r\lambda}= \bar \delta^{-1/2} (r\lambda)^{-(2+\gamma)}Q_{r\lambda}.$$
Indeed, it is easy to check that in $B_\rho \cap \{|x_n| < \rho^2\}$ we have:
$$\nabla Q = p e_n + O(|q|\rho + |p|\|M\|\rho + |f_-(0)|\rho^2+ |p||tr M| \rho^2 + |q|\|M\|\rho^2)$$
and
$$D^2 Q = -p M  - 2(\frac 1 2 f_-(0)+ p tr M) e_n \otimes e_n + O(|q|+\|M\| |q| \rho).$$
In particular, for $\rho=r\lambda$, using the bounds for $|p|, |q|, |f_-(0)|$ we conclude that
$$\nabla Q= p e_n + O(\bar \delta^{1/2} r\lambda^{1+\gamma})$$
and
$$D^2Q = -(f_-(0)+ 2p tr M) e_n \otimes e_n + O(\bar \delta^{1/2} \lambda^\gamma).$$
Thus,  we easily obtain the second estimate in \eqref{grad}-\eqref{grad2} (recall that $p\leq 0$ and $e_n \cdot \nu^- \leq 0$). The first one follows by using that at $y_0$ we have $|\nu - e_n| \leq \lambda r$.

Hence the claim holds and rescaling back we get
\be\label{normals}(\nabla u^-(x_0) - \nabla Q(x_0)) \cdot \nu \leq C \bar \delta^{1/2}r\lambda^{1+\gamma} . \ee
Moreover, at  such point $|\nu -e_n|\leq \lambda r$ hence we have
\begin{eqnarray*}& |\nabla Q(x_0) \cdot \nu - p| \leq |\nabla Q(x_0)\cdot \nu - \nabla Q(x_0) \cdot e_n| + |Q_{n}(x_0)-p| \\
&\leq \|\nabla Q\|_\infty |\nu -e_n| + O(\bar \delta^{1/2} r \lambda^{1+\gamma})
\leq Cr \bar \delta^{1/2}  \lambda^{1+\gamma}\end{eqnarray*}
and we reach the desired conclusion.
\end{proof}

In the next proposition we show that if we are in a degenerate setting, that is $u^-$ is very close to the configuration $Q_{0,0,e_n,0}$, then either this is preserved at a smaller (universal) scale or $u^-$ becomes close to a configuration $Q_{p,q,{\bf e}, M}$ with a non-zero slope $p$. In either case the positive part $u^+$ also improves. Without loss of generality, we still denote the universal constants below as in previous propositions.

\begin{prop}\label{first_step}There exist universal constants $\bar \lambda, \bar \delta, \bar \eta$ such that if
\begin{equation}\label{flat1tp_dic_again}
\text{$u^+$ is $(V, \lambda^{2+\gamma}, \bar \delta)$ flat in $B_{\lambda}, \lambda \leq \bar \lambda$}
\end{equation} with  $V= V^1_{M,e_n, a_n} \in \mathcal V_{f_+},$ 
\begin{equation}\label{flat2tp_dic_again}
|f_\pm(x) - f_\pm(0)| \leq \bar \delta |x|^\gamma, \quad \|f_-\|_{\infty} \leq \bar \delta
\end{equation}
and 
\be\label{more_again} |u^- - Q_{0,0, e_n, 0}| \leq \bar \delta^{1/2} \lambda^{2+\gamma}, \quad \text{in $B^-_\lambda(u)$}\ee
then either one of the following holds:
\begin{enumerate}
\item there exists $\bar V= V^1_{\bar M, \bar{\bf e}, \bar a_{\bar{\bf e}}}  \in \mathcal V_{f_+},$ such that \be\label{automatic} \text{$u^+$ is $(\bar V,  (\bar \eta \lambda)^{2+\gamma}, \bar \delta)$ flat in $B_{\bar \eta \lambda},$} \ee
and
\be\label{conclusion}|u^- - Q_{0,0, \bar{\bf e}, 0}| \leq \bar \delta^{1/2} (\bar \eta \lambda)^{2+\gamma}, \quad \text{in $B^-_{\bar \eta \lambda}(u)$;}\ee
\item there exists $V^*= V^{\alpha^*}_{M^*, {\bf e}^*, a^*_{{\bf e}^*}}  \in \mathcal V_{f_+},$ such that
 $$\text{$u^+$ is $(V^*,\bar \eta^2 \lambda^{2+\gamma}, \bar \delta)$ flat in $B_{\bar \eta \lambda},$} $$
and
$$|u^- - Q_{p^*,q^*, {\bf e}^*,  M^*}| \leq \bar \delta^{1/2} (\bar \eta \lambda)^{2+\gamma}, \quad \text{in $B^-_{\bar \eta \lambda}(u)$,}$$ for $\alpha^*= G(|p^*|)$ and $p^*<0, |p^*| \sim(\bar \delta^{1/2} \lambda^{1+\gamma}), |q^*| =O(\bar \delta^{1/2}\lambda^\gamma).$

\end{enumerate}

\end{prop}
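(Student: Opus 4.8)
The plan is to run a compactness/linearization argument on the negative phase, in the spirit of the improvement--of--flatness Propositions \ref{IFtp} and \ref{IF}, after observing that in the degenerate regime $u^{+}$ automatically solves a one--phase problem; I would argue by contradiction, letting $\bar\delta\to 0$ and $\lambda\to 0$ along a sequence satisfying the hypotheses but for which neither alternative holds. First, since $u^{-}$ is $\bar\delta^{1/2}\lambda^{2+\gamma}$--close to $Q_{0,0,e_n,0}$ in $B^{-}_{\lambda}(u)$ by \eqref{more_again}, Lemma \ref{fbcondition} (applied with $r=1$ in the limiting case $p=q=0$, $M=0$, $\alpha=G(0)=1$; \eqref{bound_r} then reads $\bar\delta^{1/2}\ge 2\lambda^{1+\gamma}$) gives $|u^{+}_{\nu}-1|\le \bar C\bar\delta^{1/2}\lambda^{1+\gamma}$ on $F(u)\cap B_{\lambda}$ in the viscosity sense of Remark \ref{normals_v}. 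Hence $u^{+}$ solves the one--phase problem \eqref{fb_almost_2} with $\alpha=1$, and Proposition \ref{IF} produces $\bar V=V^{1}_{\bar M,\bar{\bf e},\bar a_{\bar{\bf e}}}\in\mathcal V_{f_{+}}$ with $u^{+}$ $(\bar V,(\bar\eta\lambda)^{2+\gamma},\bar\delta)$ flat in $B_{\bar\eta\lambda}$ and $|\bar{\bf e}-e_{n}|,\|\bar M-M\|\le C\lambda^{1+\gamma}$. This already yields \eqref{automatic}; moreover replacing the slope $1$ of $\bar V$ by any $\alpha^{*}=1+O(\bar\delta^{1/2}\lambda^{1+\gamma})$ costs only $O(\bar\delta^{1/2}\lambda^{1+\gamma}\cdot\bar\eta\lambda)\le\bar\eta^{2}\lambda^{2+\gamma}$ in flatness (for $\bar\delta$ small relative to $\bar\eta$), so the weaker ``intermediate'' $C^{2}$ improvement of $u^{+}$ needed in alternative (2) is also available. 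It only remains to identify the behaviour of $u^{-}$.

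For the negative side I would set $v(x)=\bar\delta^{-1/2}\lambda^{-(2+\gamma)}\bigl(u^{-}(\lambda x)-Q_{0,0,e_{n},0}(\lambda x)\bigr)$ as in \eqref{defv}. Using $\Delta u^{-}=-f_{-}$, $\Delta Q_{0,0,e_{n},0}=-f_{-}(0)$, \eqref{flat2tp_dic_again}, \eqref{more_again}, together with the fact that the $(V,\lambda^{2+\gamma},\bar\delta)$ flatness of $u^{+}$ forces $F(u_{\lambda})\subset\{|x_{n}|\le C\lambda^{1+\gamma}\}$, one gets $|v|\le 1$ in $B^{-}_{1}(u_{\lambda})$, $|\Delta v|\le 2\bar\delta^{1/2}$, and $|v|\le C\bar\delta^{1/2}\lambda^{2+\gamma}$ on $F(u_{\lambda})$. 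By the Harnack inequality of Section \ref{hanaineq} and the boundary gradient estimate established in the proof of Lemma \ref{fbcondition} (case $r=1$), the $v$'s are equicontinuous up to $F(u_{\lambda})$, so a subsequence converges uniformly on $\overline{B^{-}_{1/2}}$ to $v_{0}$, harmonic in $B_{1/2}\cap\{x_{n}<0\}$, vanishing on $B_{1/2}\cap\{x_{n}=0\}$, with $|v_{0}|\le 1$. By odd reflection $v_{0}$ extends harmonically, and since it vanishes on $\{x_{n}=0\}$ all tangential derivatives at $0$ vanish and $\partial^{2}_{nn}v_{0}(0)=-\Delta_{x'}v_{0}(0)=0$, whence $|v_{0}(x)-\ell x_{n}-x_{n}(m'\cdot x')|\le C|x|^{3}$ with $\ell=\partial_{n}v_{0}(0)$, $m'=(\partial_{in}v_{0}(0))_{i<n}$, $|\ell|,|m'|\le C$.

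Finally I would read off the dichotomy. Since $u^{-}\ge 0$ vanishes on $F(u)$ we have $u^{-}_{\nu}\ge 0$ there, and (as in the computation in Lemma \ref{fbcondition}) $u^{-}_{\nu}=-\bar\delta^{1/2}\lambda^{1+\gamma}\ell+o(\bar\delta^{1/2}\lambda^{1+\gamma})$ at the origin, so $\ell\le 0$ up to a negligible error. Unravelling the change of variables, in $B^{-}_{\bar\eta\lambda}(u)$ the function $u^{-}$ lies within $C\bar\delta^{1/2}(\bar\eta\lambda)^{2+\gamma}$ of $Q_{0,0,e_{n},0}$ plus a correction of new slope $p^{\mathrm{new}}=\bar\delta^{1/2}\lambda^{1+\gamma}\ell\le 0$ and lower order terms $|q^{\mathrm{new}}|=O(\bar\delta^{1/2}\lambda^{\gamma})$, $\|M^{\mathrm{new}}\|\le\|\bar M\|$; by Remark \ref{Qsize} replacing the direction $e_{n}$ by $\bar{\bf e}$ costs only $O\bigl(|e_{n}-\bar{\bf e}|(|p^{\mathrm{new}}|\bar\eta\lambda+\bar\delta(\bar\eta\lambda)^{2})\bigr)\ll\bar\delta^{1/2}(\bar\eta\lambda)^{2+\gamma}$, and the remaining adjustments are distributed among the data of a polynomial in $\mathcal Q_{f_{-}}$ and the $\mathcal V_{f_{+}}$--normalizations (the choice of $a^{*}_{\bar{\bf e}}$, $q^{*}$, etc.). Fix a universal threshold $\vartheta\simeq\bar\eta^{1+\gamma}$. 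If $|\ell|\le\vartheta$, then $|p^{\mathrm{new}}|\bar\eta\lambda\le\tfrac14\bar\delta^{1/2}(\bar\eta\lambda)^{2+\gamma}$ and the corrections can be absorbed into the error, so $u^{-}$ is $(Q_{0,0,\bar{\bf e},0},\bar\delta^{1/2}(\bar\eta\lambda)^{2+\gamma})$--close in $B^{-}_{\bar\eta\lambda}(u)$; together with the one--phase improvement above this is alternative (1). If $|\ell|>\vartheta$, then $p^{*}:=p^{\mathrm{new}}<0$ with $|p^{*}|\sim\bar\delta^{1/2}\lambda^{1+\gamma}$; taking ${\bf e}^{*}=\bar{\bf e}$, $\alpha^{*}=G(|p^{*}|)$, $M^{*}=\bar M$ and choosing $q^{*},a^{*}_{{\bf e}^{*}}$ from the compatibility conditions, $u^{-}$ is $(Q_{p^{*},q^{*},{\bf e}^{*},M^{*}},\bar\delta^{1/2}(\bar\eta\lambda)^{2+\gamma})$--close in $B^{-}_{\bar\eta\lambda}(u)$, and correcting the slope of $\bar V$ from $1$ to $\alpha^{*}$ makes $u^{+}$ $(V^{*},\bar\eta^{2}\lambda^{2+\gamma},\bar\delta)$ flat in $B_{\bar\eta\lambda}$; this is alternative (2).

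The step I expect to be the main obstacle is this last one: not the compactness, but the simultaneous bookkeeping of every correction produced by the limiting profile — the slope $p^{*}$, the lower order term $q^{*}$, the quadratic part $M^{*}$ (which is forced to agree with the $\bar M$ coming from the positive side so that the two one--sided expansions see the same free boundary), and the rotation of the reference direction from $e_{n}$ to $\bar{\bf e}$ — so that the resulting polynomials lie in $\mathcal Q_{f_{-}}$ and $\mathcal V_{f_{+}}$ with the sharp coefficient bounds, and in particular so that in alternative (2) the detected slope $p^{*}$ is both strictly negative and genuinely of size $\sim\bar\delta^{1/2}\lambda^{1+\gamma}$, not merely bounded by it. This is precisely where the coupling of the two phases across the free boundary, quantified through Lemma \ref{fbcondition}, is essential, and it is the origin of the eventual small loss of H\"older exponent in the global iteration.
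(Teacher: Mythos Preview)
Your overall architecture matches the paper's: first use Lemma \ref{fbcondition} to put $u^{+}$ into the one--phase regime so that Proposition \ref{IF} yields \eqref{automatic}, then analyze the renormalized negative phase and split on the size of the linear coefficient of its expansion. But there is a genuine gap in the branch that is supposed to produce alternative (i).

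When $|\ell|\le\vartheta$ you only check that the linear correction $p^{\mathrm{new}}x_{n}$ is absorbable; you do not control the mixed term $x_{n}(m'\cdot x')$. At scale $\bar\eta$ this term contributes $|m'|\bar\eta^{2}$ to the renormalized error, i.e.\ $|m'|\,\bar\delta^{1/2}\lambda^{2+\gamma}\bar\eta^{2}$ in the original variables, and to fit under $\bar\delta^{1/2}(\bar\eta\lambda)^{2+\gamma}$ you need $|m'|\le \bar\eta^{\gamma}$. From the harmonic expansion alone you only know $|m'|\le C$, so the absorption fails. This is precisely the point the paper has to work for: it shows (Case~1 of its proof) that if the linear coefficient $p$ is small then the mixed coefficient $q$ must also be small. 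The mechanism is the almost--positivity
\[
v(x)\ \ge\ -\tfrac12\,\bar\delta^{1/2}\lambda^{-\gamma}x_{n}^{2}
\]
coming from $u^{-}\ge 0$ and $|f_{-}(0)|\le\bar\delta$, combined with the refined approximation $|v-w|\le C(\lambda+\bar\delta^{1/2}|x_{n}|)$ to the harmonic replacement $w$. Evaluating at a judicious point $\bar x=((\operatorname{sign}q_{1})\bar\eta^{2},0,\dots,-\lambda^{(1+\gamma)/2})$ one finds that a large $|q|$ would force $v(\bar x)$ to be more negative than the almost--positivity allows; this yields $|q|\le\bar\eta$, and the same device rules out $p>\bar\eta^{4}$ (the paper's Case~2). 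Your compactness scheme does not capture this: the limit $v_{0}$ need not be nonnegative, because $\bar\delta^{1/2}\lambda^{-\gamma}$ is not forced to tend to $0$ along a contradiction sequence (indeed the paper's hierarchy $\bar\lambda^{1-\gamma}\ll\bar\delta$ pulls it the other way).

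Two smaller points. The paper does \emph{not} argue by compactness here: it works directly with the harmonic replacement $w$ of $v$ in $B_{2/3}\cap\{x_{n}<0\}$ and its Taylor expansion, which is why the positivity argument above is available at finite $\bar\delta,\lambda$. Also, the Harnack inequality of Section \ref{hanaineq} is tailored to the two--sided renormalization $\tilde u^{\eps}$ of \eqref{tilde_v}, not to your $v$, so invoking it for equicontinuity of $v$ up to $F(u_{\lambda})$ is not immediate; the paper instead gets boundary control of $v$ by the explicit barrier $\xi$ and the estimate $|v|\le C\lambda$ in the strip $S_{\lambda}$.
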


\begin{proof} All the universal constants will be specified throughout the proof. In particular, for $\bar \eta$ fixed, $\bar \lambda <<\bar \delta << \bar \eta$ and they are small enough so that Proposition \ref{IF} can be applied. 
 
In view of Lemma \ref{fbcondition} and Proposition \ref{IF} (see also Remark \ref{normals}), then \eqref{automatic} holds.
 
 Now, for $x \in B_1$, set
$$\tilde u(x)= \frac 1 \lambda u(\lambda x), \quad \tilde f_-(x)= \lambda f_-(\lambda x)$$
$$v(x)= \bar \delta^{-1/2}\lambda^{-(1+\gamma)}(\tilde u^-(x) + \frac 1 2 \tilde f_-(0)x_n^2).$$
Since $\tilde u^- \geq 0$ and $|\tilde f_-(0)| \leq \lambda \bar \delta$
\be \label{almost_positive}
v \geq - \lambda^{-\gamma}\frac{\bar \delta^{1/2}}{2} x_n^2.
\ee

If we prove that 
\be\label{WTS} |v| \leq \frac 1 2 \bar \eta^{2+\gamma}, \quad \text{in $B_{\bar\eta}^-(\tilde u),$}\ee
then in view of Remark \ref{Qsize}, we can conclude that \eqref{conclusion} holds as well, by choosing $\bar \delta$ small enough (depending on $\bar \eta$.)

From assumptions  \eqref{flat1tp_dic_again}-\eqref{flat2tp_dic_again}-\eqref{more_again} we get ($\lambda$ small)
$$F(\tilde u) \subset \{-\lambda \leq x_n \leq \lambda\}= S_\lambda,$$
$$|v| \leq 1 \quad \text{in $B_1^-(\tilde u)$}, \quad |\Delta v| \leq \bar \delta^{1/2} \quad \text{in $B_1^-(\tilde u),$} \quad |v|\leq \delta^{1/2}\lambda \quad \text{on $F(\tilde u)$}.$$

Using a barrier, we can prove that
\be \label{strip_1}|v| \leq C \lambda \quad \text{in $S_\lambda \cap B_{2/3}^-(\tilde u).$}\ee
Indeed, let $\xi$ satisfy  $$\Delta \xi= -\delta^{1/2} \quad \text{in $B_1 \cap \{x_n < \lambda\}$}$$ with $$\xi=0 \quad \text{on $x_n=\lambda,$} \quad \xi= 1 \quad \text{on $\p B_1\cap \{x_n <\lambda\}$}.$$
Then, by the maximum principle,
\be\label{max}\xi + \delta^{1/2} \lambda \geq v \quad \text{in $B_1^-(\tilde u)$}\ee
and
$$\xi \leq C|x_n - \lambda| \quad \text{in $B_{2/3} \cap \{x_n < \lambda\}$},$$
from which the desired upper bound follows. The lower bound is proved similarly.

Thus, for $\lambda$ small (depending on $\bar \eta$),  \eqref{WTS} holds in $S_\lambda$.

We now analyze what happens in $B_{\bar \eta} \cap \{x_n < -\lambda\}.$
Denote by $w$ the solution to 
$$\Delta w=0 \quad \text{in $B^-:=B_{2/3} \cap \{x_n <0\}$}$$
with boundary data
$$ w=v \quad \text{on $\p B^- \cap \{x_n <0\}$}, \quad w=0 \quad \text{on $\{x_n=0\}$}.$$

Notice that, $$|w| \leq C \lambda \quad \text{on $x_n=-\lambda$.}$$ This can be obtained  using the barrier $\xi$ above and \eqref{max} (and the corresponding upper bound for $v$).
Thus, by the maximum principle
$$|w-v| \leq C (\lambda + \bar \delta^{1/2}) \quad \text{in $B_{1/2} \cap \{x_n \leq -\lambda\}$}$$

In particular, if 
\be\label{boundonw} |w| \leq \frac 1 4 \bar \eta^{2+\gamma}, \quad \text{in $B_{\bar \eta} \cap \{x_n \leq -\lambda\}$},\ee
then \eqref{WTS} holds as long as $\lambda$ and $\bar \delta$ are small enough (depending on $\bar \eta$.)

We now determine under which conditions \eqref{boundonw} is valid. First, expanding $w$ near 0, we have
\be\label{exp_1}w(x)= x_n(p+ q \cdot x + O(|x|^2)), \quad q\cdot e_n=0, \quad |p|, |q| \leq C.\ee

Thus, there exists $\bar \eta$ universal, such that if 
\be \label{ab}|p| \leq \bar \eta^2, \quad |q| \leq \bar \eta\ee
then \eqref{boundonw} holds.

We now prove that:
$$p \geq - \bar\eta^4 \Rightarrow \eqref{boundonw} \ \text{holds} \ \Rightarrow (i)$$
$$p < - \bar \eta^4 \Rightarrow (ii).$$

First, we observe that
\be\label{claim_d} |v-w| \leq C(\lambda + \delta^{1/2}|x_n|), \quad \text{in $B_{1/2} \cap \{x_n \leq -\lambda\}$}.\ee Indeed, let $\psi, \phi$ solve
$$\Delta \psi= 0, \Delta \phi= -1 \quad \text{in $B_{2/3}\cap \{x_n <-\lambda\}$}$$
with
$$\psi=\phi=0 \quad \text{on $\p B_{2/3} \cap \{x_n < -\lambda\}$}$$
$$\psi=C \lambda, \phi = 0 \quad \text{on $B_{2/3} \cap \{x_n=-\lambda\}.$}$$
Then,
$$v- w \leq \psi + \delta^{1/2} \phi \quad \text{in $B_{2/3}\cap \{x_n \leq -\lambda\}$}$$
and the desired upper bound follows. The lower bounds is obtained similarly.

\smallskip

We now distinguish three cases.

\smallskip

\noindent{\it Case 1}. $|p| \leq \bar \eta^4.$

In this case we show that $|q| \leq \bar \eta$ hence \eqref{ab} is satisfied.

Indeed, assume $|q| > \bar \eta$, say to fix ideas  $|q_1|>\frac{1}{\sqrt n}\bar \eta$. Let $\bar x= ((sign q_1) \bar\eta^{2},0,\ldots, -\lambda^\beta)$ with $\beta=(1+\gamma)/2$. Then, using \eqref{almost_positive}-\eqref{claim_d} we get 
$$- \frac 1 2 \lambda \bar \delta^{1/2} \leq v(\bar x) \leq w(\bar x)+ C \lambda + C \bar \delta^{1/2} \lambda^{\beta}$$
from which, using \eqref{exp_1}, we deduce that (for $\bar \lambda, \bar \delta<<\bar \eta$)
$$C \lambda \geq \lambda^\beta(p+|q_1| \bar \eta^2 - \bar C \bar \eta^4 - C \bar \delta^{1/2}) \geq C \bar \delta^{1/2} \lambda^\beta$$
and we reach a contradiction if $\bar \lambda^{1-\gamma} << \bar \delta.$
 
\smallskip

\noindent {\it Case 2.}  $p>\bar \eta^4$.

In this case we can argue similarly as in Case 1, by choosing $\bar x = (0, -\lambda^\beta).$ 

\smallskip

\noindent{\it Case 3.} $p< - \bar \eta^{4}.$

In this case, we first notice that in view of \eqref{strip_1}-\eqref{claim_d} and the linear growth of $w$ (extended to zero in $x_n >0$), we have
$$|v-w| \leq C (\lambda+ \bar \delta^{1/2}) \quad \text{in $B_{\bar \eta}^-(\tilde u)$}.$$
Moreover, in view of \eqref{exp_1} we also have that ($B_{\bar \eta}^-(\tilde u) \subset \{x_n < \lambda\}$)
$$|w- x_n(p+q \cdot x)| \leq C \bar \eta^3 \quad \text{in $B_{\bar \eta}^-(\tilde u)$}.$$
(for $\lambda$ small.)

Combining these two inequalities and using the formula for $v$ and rescaling back, we obtain $$|u^- - Q_{\bar p, \bar q, e_n, 0}| \leq \frac 1 2 \bar \delta^{1/2}(\bar \eta \lambda)^{2+\gamma} \quad \text{in $B_{\bar \eta \lambda}^-(u)$}$$
with
$$\bar p = \bar \delta^{1/2} \lambda^{1+\gamma} p, \quad \bar q=  \bar \delta^{1/2} \lambda^{\gamma} q.$$
Thus, in view of Remark \ref{Qsize}, if $\bar{\bf e}, \bar M$ are given by \eqref{automatic} (which we have already observed to hold), then
$$|u^- - Q_{\bar p,  q^*, \bar{\bf e}, \bar M}| \leq \bar \delta^{1/2}(\bar \eta \lambda)^{2+\gamma} \quad \text{in $B_{\bar \eta \lambda}^-(u)$}, \quad |\bar q- q^*| \leq C \lambda^{1+\gamma}.$$
Finally, setting $\alpha^*= G(|\bar p|)$, and letting $V^*= V^{\alpha^*}_{\bar M, \bar {\bf e}, a^*_{\bar{\bf e}}}$ we obtain the desired conclusion in $(ii)$ (again for $\bar \delta << \bar \eta$). Here $a^*$ is chosen so that $V^* \in \mathcal V_{f_+},$ i.e.
$$2\alpha^* a^*\cdot \bar{\bf e} -\alpha^* tr\bar M=f_{+}(0).$$
 Thus, the claim follows from the fact that $$|\alpha^* -1|=O(\delta^{1/2}\lambda^{1+\gamma}), \quad |a^*-\bar a| = O(|\alpha^*-1||2\bar a_{\bar{\bf e}} - tr \bar M|),$$ with
 $$r|\bar a|, r\|M\| \leq \bar \delta r^{\frac{1+\gamma}{2}}, \quad r=\bar \eta \lambda,$$
  and one can easily check that 
$$|V^{1}_{\bar M, \bar{\bf e}, \bar a_{\bar{\bf e}}}- V^{\alpha^*}_{\bar M, \bar{\bf{e}}, a^*_{\bar{\bf e}}}| \leq C(|\alpha^*-1|r + |a^* - \bar a|r^2)\leq  \frac 1 2 \bar \eta^2 \lambda^{2+\gamma} \quad \text{in $B_{r}$.}$$
\end{proof}

\section{The iteration.}\label{iteration7}

In the next proposition we show that if we are in an ``intermediate degenerate setting", that is if at a small enough scale $u^+$is flat and $u^-$ is close to a configuration $Q$ with a small non-zero slope, then the flatness of $u^+$ improves for a little while at a $C^2$ rate while the estimate on $u^-$ improves at a $C^{2,\gamma}$ rate.

Let $V= V^{\alpha}_{M, e_n, a_n} \in \mathcal V_{f_+}$ and $Q= Q_{p, q, e_n, M}$ with $\alpha=G(|p|).$

\begin{prop}\label{it}There exist universal constants $\bar \lambda, \bar \delta, \bar \eta,$ such that if
\begin{equation}\label{f1}
 \text{$u^+$ is $(V, r^2\lambda^{2+\gamma}, \bar \delta)$ flat in $B_{r\lambda}, \lambda \leq \bar \lambda$}
\end{equation}  for some $r\leq \bar \eta$ with $\bar \delta^{1/2} r^\gamma \geq  2 \lambda^{1+\gamma},$ and 
\begin{equation}\label{f2}
|f_\pm(x) - f_\pm(0)| \leq \bar \delta |x|^\gamma, \quad \|f_-\|_{\infty} \leq \bar \delta
\end{equation}
\be\label{f3}|u^- - Q| \leq \bar \delta^{1/2} (r \lambda)^{2+\gamma}, \quad \text{in $B^-_{r \lambda}(u)$,}\ee  with $p <0, |p| \sim \bar \delta^{1/2} \lambda^{1+\gamma}, |q|= O(\bar \delta^{1/2}\lambda^\gamma),$
then
\begin{equation}\label{option2}
 \text{$u^+$ is $(\bar V,  (\bar \eta r)^2\lambda^{2+\gamma}, \bar \delta)$ flat in $B_{r\bar \eta \lambda}$,}
\end{equation}
and 
\be\label{more4} |u^- - \bar Q| \leq \bar \delta^{1/2}(r\bar \eta \lambda)^{2+\gamma}, \quad \text{in $B^-_{\lambda r \bar \eta}(u)$}\ee
with $\bar V= V^{\bar \alpha}_{\bar M, \bar {\bf e}, \bar a_{\bar{\bf e}}} \in \mathcal V_{f_+}$, $\bar Q= Q_{\bar p, \bar q, \bar{\bf e}, \bar M}$,  $\bar \alpha= G(|\bar p|)$, $\bar p <0, |\bar p| \sim \bar \delta^{1/2} \lambda^{1+\gamma}, |\bar q|= O(\bar \delta^{1/2}\lambda^\gamma).$
\end{prop}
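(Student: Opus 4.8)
### Proof strategy for Proposition \ref{it}

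The plan is to run the \emph{one-phase} improvement of flatness (Proposition \ref{IF}) on $u^+$ and, in parallel, a $C^{2,\gamma}$-type linear estimate on the error $u^- - Q$, exactly in the spirit of Proposition \ref{first_step}, but now starting from a configuration $Q$ with a \emph{non-zero} (though small) slope $p$. The extra smallness of the scale, encoded in the condition $\bar\delta^{1/2}r^\gamma \geq 2\lambda^{1+\gamma}$, is precisely what makes Lemma \ref{fbcondition} applicable: it guarantees that the closeness of $u^-$ to $Q$ forces $u^+$ to be a viscosity solution of the one-phase free boundary problem \eqref{fb_almost_2} with $u^+_\nu$ within $\bar C\bar\delta^{1/2}r\lambda^{1+\gamma}$ of $\alpha = G(|p|)$ on $F(u)\cap B_{r\lambda}$. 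Hence the first step is: invoke Lemma \ref{fbcondition} to get the free boundary condition \eqref{1phase} at scale $r\lambda$ (after a rescaling $x\mapsto r\lambda x$, noting that the hypotheses $|p|\sim \bar\delta^{1/2}\lambda^{1+\gamma}$, $|q|=O(\bar\delta^{1/2}\lambda^\gamma)$ are exactly those required), then apply Proposition \ref{IF} to conclude \eqref{option2} with $\bar V = V^{\bar\alpha}_{\bar M,\bar{\bf e},\bar a_{\bar{\bf e}}}\in\mathcal V_{f_+}$, where a priori $\bar\alpha = \alpha$; the slope $\bar p$ of the new negative profile is then \emph{defined} so that $\bar\alpha = G(|\bar p|)$, and one must check $|\bar p| \sim \bar\delta^{1/2}\lambda^{1+\gamma}$ still holds (this needs $|\bar\alpha - \alpha|$ to be controllably small, i.e.\ $O(\bar\delta^{1/2}\lambda^{1+\gamma})$, coming from the $C\lambda^{1+\gamma}$ control on how much the flatness-improvement step moves $\alpha$ — here one uses the one-phase nature, where $\alpha$ is literally unchanged, so $\bar\alpha=\alpha$ and $\bar p = p$ up to the change of direction $e_n \to \bar{\bf e}$).

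The second, and more substantial, step is to produce \eqref{more4}. Here I would rescale to $B_1$ by setting $\tilde u(x) = (r\lambda)^{-1}u(r\lambda x)$, introduce the normalized error
\[
v(x) = \bar\delta^{-1/2}(r\lambda)^{-(1+\gamma)}\bigl(\tilde u^-(x) - \tilde Q(x)\bigr),
\]
where $\tilde Q(x) = (r\lambda)^{-1}Q(r\lambda x)$, so that $|v|\leq 1$ in $B_1^-(\tilde u)$, $|\Delta v| \leq 2\bar\delta^{1/2}$ there, and $|v|\leq \bar\delta^{1/2}$ on $F(\tilde u)$ (the last bound uses $\bar\delta^{1/2}r^\gamma \geq 2\lambda^{1+\gamma}$ together with the estimates on $p,q$, exactly as in the proof of Lemma \ref{fbcondition} and Proposition \ref{first_step}). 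One then compares $v$ in $B^-_{2/3}\cap\{x_n < \lambda\}$ with the harmonic function $w$ solving $\Delta w = 0$ in $B^-_{2/3}\cap\{x_n<0\}$ with $w = v$ on the curved part and $w = 0$ on $\{x_n=0\}$; a barrier argument in the strip $S_\lambda$ (reusing the function $\xi$ from the proof of Proposition \ref{first_step}) gives $|v - w| \leq C(\lambda + \bar\delta^{1/2})$ in the relevant region. Expanding $w$ near the origin, $w(x) = x_n(p^\sharp + q^\sharp\cdot x' + O(|x|^2))$, and interior estimates for harmonic functions bound the coefficients $p^\sharp, q^\sharp$ and the $O(|x|^2)$ remainder by universal constants, so that for $\bar\eta$ universal and small one gets $|w - x_n(p^\sharp + q^\sharp\cdot x')| \leq C\bar\eta^3$ in $B_{\bar\eta}^-(\tilde u)$. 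Unravelling the rescalings, this identifies a new quadratic $\bar Q = Q_{\bar p, \bar q, \bar{\bf e}, \bar M}$ with $\bar p - p = \bar\delta^{1/2}\lambda^{1+\gamma}\,p^\sharp \cdot (r\bar\eta\lambda)$ order corrections; more precisely $|u^- - Q_{\hat p, \hat q, e_n, 0}| \leq \tfrac12\bar\delta^{1/2}(r\bar\eta\lambda)^{2+\gamma}$, and then Remark \ref{Qsize} is used twice — once to change the direction from $e_n$ to $\bar{\bf e}$ (the direction and second-order data coming from \eqref{option2}) and once to reinsert the Hessian term $\bar M$ — picking up errors of order $|\bar{\bf e} - e_n| \leq C(r\bar\eta\lambda)$-type and $\|\bar M\|$-type that are absorbed for $\bar\delta \ll \bar\eta$. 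Finally $\bar\alpha = G(|\bar p|)$ is enforced and one checks, exactly as at the end of the proof of Proposition \ref{first_step}, that the discrepancy between $V^{\alpha}_{\cdot}$ and $V^{\bar\alpha}_{\cdot}$ is $\leq \tfrac12\bar\eta^2\lambda^{2+\gamma}$ at scale $r\bar\eta\lambda$, so that \eqref{option2} holds with the correct $\bar V\in\mathcal V_{f_+}$.

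I expect the main obstacle to be the bookkeeping of the many small parameters and the order in which the hierarchy $\bar\lambda \ll \bar\delta \ll \bar\eta$ (with the additional constraint $\bar\delta^{1/2}r^\gamma \geq 2\lambda^{1+\gamma}$ on $r$) must be invoked: in particular one has to make sure that (a) the free-boundary-condition error $\bar C\bar\delta^{1/2}r\lambda^{1+\gamma}$ delivered by Lemma \ref{fbcondition} is exactly of the form $\bar\delta^{1/2}(r\lambda)^{1+\gamma}$ up to the factor $r^{-\gamma}\leq$ (allowed by \eqref{bound_r}) so that Proposition \ref{IF} genuinely applies at scale $r\lambda$; (b) the new slope $\bar p$ stays comparable to $\bar\delta^{1/2}\lambda^{1+\gamma}$ — i.e.\ the improvement does not destroy the ``intermediate degenerate'' regime, which is what allows the proposition to be iterated — and this is where the one-phase structure (no change in $\alpha$, only a rotation of the normal) is essential; and (c) the quadratic remainder in the expansion of $w$, together with the strip estimate in $S_\lambda$, is genuinely $o(\bar\eta^{2+\gamma})$ after the correct power of $\lambda$ is extracted, which forces the choice $\bar\lambda^{1-\gamma} \ll \bar\delta$ as in Proposition \ref{first_step}. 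None of these steps is conceptually new relative to Section \ref{dicoD}; the content of the proposition is that the argument of Proposition \ref{first_step}, Case 3, can be \emph{repeated} now that the base slope is already nonzero, and the slope estimates are stable under the iteration.
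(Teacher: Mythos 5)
Your strategy is essentially the paper's: rescale by $r\lambda$, apply Lemma~\ref{fbcondition} so that Proposition~\ref{IFtp} (with $\eps=r\lambda^{1+\gamma}$) improves the flatness of $u^+$, then normalize $v=\bar\delta^{-1/2}(r\lambda)^{-(1+\gamma)}(\tilde u^--\tilde Q)$, compare with the harmonic replacement $w$ via the same strip barrier as in Proposition~\ref{first_step}, expand $w$ near the origin to read off $\bar p,\bar q$, pass to $\bar{\bf e},\bar M$ through Remark~\ref{Qsize}, and finally enforce $\bar\alpha=G(|\bar p|)$ and absorb the resulting profile shift in the slack between $\bar\eta^{2+\gamma}$ and $\bar\eta^2$. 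Two small inaccuracies: in your opening paragraph you claim ``$\bar\alpha=\alpha$ and $\bar p=p$ up to the change of direction,'' but the logical order is reversed in the actual argument — $\bar p=p+\bar\delta^{1/2}(r\lambda)^{1+\gamma}p^*$ is produced by the negative-side expansion first, and only afterwards is $\bar\alpha:=G(|\bar p|)$ defined (so $\bar\alpha\ne\alpha$ in general), which is precisely why the final flatness exponent must drop to $\bar\eta^2$; your second paragraph and closing remark do get this right, so this is an internal inconsistency rather than a gap. Also, in point (b) the reason $|\bar p|$ stays $\sim\bar\delta^{1/2}\lambda^{1+\gamma}$ is not the one-phase structure but simply that the correction $\bar\delta^{1/2}(r\lambda)^{1+\gamma}p^*$ is $O(r^{1+\gamma})$ times smaller than $p$, and $r\le\bar\eta$.
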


\begin{proof} 

Call,
$$\tilde u(x)= \frac{1}{r\lambda} u(r\lambda x), \quad \tilde V(x)=\frac{1}{r\lambda} V(r\lambda x),  \quad \tilde Q(x)=\frac{1}{r\lambda} Q(r\lambda x)\quad x \in B_1.$$

{\bf Step 1.} As usual, universal constants are small enough so that previous results can be applied. They will be made smaller in the proof, with $\bar \lambda<< \bar \delta << \bar \eta.$ 
Let,
$$\eps = r \lambda^{1+\gamma}.$$ 
In view of Lemma \ref{fbcondition}, $\tilde u/\alpha$ satisfies the assumptions of Proposition \ref{IFtp} (see also Remark \ref{normals_v}) hence
\begin{equation}\label{important_1}
 \text{$\tilde u^+$ is $(\tilde V^*, \bar \eta^{2+\gamma}\eps, \bar \delta)$ flat in $B_{\bar \eta}$}
\end{equation}
with $\tilde V^*=V^\alpha_{M^*, \bar {\bf e}, a^*_n}$.
Let $\bar M= M^*/(r\lambda), \bar a= a^*/(r\lambda).$

\smallskip

{\bf Step 2.}  Let $$v(x)= \bar \delta^{-1/2}(r\lambda)^{-(1+\gamma)}(\tilde u^-(x) - \tilde Q(x)).$$

We argue similarly as in Proposition \ref{first_step}.

From assumptions \eqref{f1}-\eqref{f2}-\eqref{f3} and the estimates for the sizes of $p, q, r$ we get
$$F(\tilde u) \subset \{-r\lambda \leq x_n \leq r\lambda\}= S_{r\lambda},$$
$$|v| \leq 1 \quad \text{in $B_1^-(\tilde u)$}, \quad |\Delta v| \leq \bar 2\delta^{1/2} \quad \text{in $B_1^-(\tilde u),$} \quad |v|\leq \bar \delta^{1/2}\quad \text{on $F(\tilde u)$}.$$

Using a barrier, we can prove that
\be \label{strip}|v| \leq C \lambda+\bar \delta^{1/2} \quad \text{in $S_{r\lambda} \cap B_{2/3}^-(\tilde u).$}\ee 

Indeed, let $\xi$ satisfy $$\Delta \xi= -\delta^{1/2} \quad \text{in $B_1 \cap \{x_n <r \lambda\}$}$$ with $$\xi=0 \quad \text{on $x_n=r\lambda,$} \quad \xi= 1 \quad \text{on $\p B_1\cap \{x_n <r\lambda\}$}.$$
Then, by the maximum principle,
\be\label{max2}\xi + \bar \delta^{1/2} \geq v \quad \text{in $B_1^-(\tilde u)$}\ee
and
$$\xi \leq C|x_n - r\lambda| \quad \text{in $B_{2/3} \cap \{x_n < r\lambda\}$},$$
from which the desired upper bound follows. The lower bound is proved similarly.

We now analyze the region where $x_n \leq -r\lambda.$

Denote by $w$ the solution to 
$$\Delta w=0 \quad \text{in $B^+:=B_{2/3} \cap \{x_n < 0\}$}$$
with boundary data
$$ w=v \quad \text{on $\p B^+ \cap \{x_n < 0\}$}, \quad w=0 \quad \text{on $\{x_n=0\}$}.$$

Using the barrier $\xi$ above and \eqref{max2} we get that
$$|w| \leq C \lambda + \bar \delta^{1/2} \quad \text{on $B_{2/3} \cap \{x_n = -r\lambda\}.$}$$

Hence by the maximum principle, ($\lambda << \bar \delta$)
\be|v - w| \leq C \bar \delta^{1/2} \quad \text{on  $B_{2/3} \cap \{x_n < -r\lambda\}.$}\ee

In fact, in view of \eqref{strip} and the linear growth of $w$ we get (say $w=0$ on $x_n >0$)
\be\label{key_2}|v - w| \leq C \bar \delta^{1/2} \quad \text{on  $B_{1/2}^-(\tilde u).$}\ee

We now expand $w$ near 0, i.e.
\be\label{exp}w(x)= x_n(p^*+ q^* \cdot x + O(|x|^2)), \quad q^*\cdot e_n=0, \quad |p^*|, |q^*| \leq C.\ee
Hence, for $\bar \eta$ small universal, and $\lambda << \bar \eta$, ( $B_{\bar \eta}^-(\tilde u) \subset \{x_n < r\lambda\}$)
\be\label{key_1}|w-x_n(p^*+ q^* \cdot x )| \leq \frac 1 2 \bar \eta^{2+\gamma} \quad \text{in $B_{\bar \eta}^-(\tilde u).$}\ee

In this case, using the formula for $v$ and rescaling back, we obtain from \eqref{key_2}-\eqref{key_1} that
$$|u^- - Q_{\bar p, \bar q, e_n, M}| \leq \frac 4 5 \bar \delta^{1/2}(\bar \eta r \lambda)^{2+\gamma} \quad \text{in $B_{\bar \eta r \lambda}^-(u)$},$$
with
$$\bar p = \bar \delta^{1/2} (r\lambda)^{1+\gamma} p^*+p, \quad \bar q=  \bar \delta^{1/2} (r\lambda)^{\gamma} q^*+q.$$
Using that $r \leq \bar \eta$, it easily follow that for $\bar \eta$ small universal,
$$\bar p <0, \quad |\bar p|\sim \bar \delta^{1/2} \lambda^{1+\gamma}$$
and clearly
$$|\bar q| = O(\bar \delta^{1/2}\lambda^\gamma).$$
If we replace $M$ with $\bar M$, then for $\bar \delta << \bar \eta$, the error $E$ has size
$$E= O(\|M-\bar M\||x|^2(|\bar p| + |\bar q||x|))\leq \frac{1}{10} \bar \delta^{1/2} (\bar \eta r \lambda)^{2+\gamma}$$
where in the last inequality we used that $\lambda^{1+\gamma} \leq \frac 1 2 \bar \delta^{1/2} r^\gamma.$
Similarly, if we now replace $e_n$ with $\bar{\bf e}$ and $\bar q$ with the corresponding $\tilde q$ such that $\tilde q \cdot \bar{\bf e} =0$ we get an error $E$ of size
$$E=O(|x||e_n -\bar{\bf e}| (|\bar p|+|\bar q||x|) + (|f_-(0)|+|\bar p|\|\bar M\|)|x|^2|e_n-\bar{\bf e}|^2 + |\bar q-\tilde q|(|x|+ \|M\||x^2|))$$
and again 
$$E\leq \frac{1}{10} \bar \delta^{1/2} (\bar \eta r \lambda)^{2+\gamma}$$
using that $\lambda^{1+\gamma} \leq \frac 1 2 \bar \delta^{1/2} r^\gamma.$
Thus, 
$$|u^- - Q_{\bar p, \tilde q, \bar{\bf e}, \bar M}| \leq \bar \delta^{1/2} (\bar \eta r \lambda)^{2+\gamma}$$
and $|\tilde q|=O(\bar \delta^{1/2}\lambda^\gamma).$
Finally, let $\bar \alpha = G(|\bar p|).$ Then,
$$|\alpha-\bar \alpha| = O(|p-\bar p|) = O(\bar \delta^{1/2}(r\lambda)^{1+\gamma}).$$
Thus, dropping the dependence on the subscripts $\bar a, \bar{\bf e}, \bar M$
$$|V^{\alpha} - V^{\bar \alpha}|\leq C|\alpha-\bar \alpha| \bar \eta r \lambda \leq C \bar \delta^{1/2}(r\lambda)^{2+\gamma}\bar \eta $$
and for $\bar \eta$ small universal and $\bar \delta << \bar \eta$
$$V^{\bar \alpha}(x+\bar \eta^{2+\gamma} r^2 \lambda^{2+\gamma}\bar{\bf e})+  C \bar \delta^{1/2}(r\lambda)^{2+\gamma}\bar \eta \leq V^{\bar \alpha}(x+\frac 1 2\bar \eta^{2} r^2 \lambda^{2+\gamma}\bar{\bf e}).$$
Hence, scaling back, we conclude from \eqref{important_1} that \eqref{option2} holds for $\bar V= V^{\bar \alpha}_{\bar M, \bar{\bf e}, \bar a_{\bar{\bf e}}}$ (arguing similarly for the lower bound.) As at the end of the proof of Proposition \ref{first_step} we can now modify $\bar a$ slightly to guarantee that $\bar V \in \mathcal V_{f_+}.$
\end{proof}

Finally, in the proposition below, we show that after reaching a small enough scale, the approximation of $u^-$ with a configuration $Q$ is good enough to recover the full $C^{2,\gamma^*}$ flatness of $u$ (in a non-degenerate setting.)

\begin{prop}\label{together}
There exist $\bar \lambda, \bar \delta, \gamma^*$ universal such that if 
\begin{equation}\label{flat1tp_final}
\text{$u^+$ is $(V, r^2\lambda^{2+\gamma}, \bar \delta)$ flat in $B_{r\lambda}, \lambda \leq \bar \lambda$}
\end{equation}  with $V=V^{\alpha}_{M,e_n, a_n}$, for $r$ with $\bar \delta^{1/2}r^\gamma \in [2\bar \eta^{\gamma}\lambda^{1+\gamma},  2\lambda^{1+\gamma})$ and 
\be\label{more_final}|u^- - Q_{p,q, e_n,  M}| \leq \bar \delta^{1/2} (r \lambda)^{2+\gamma}, \quad \text{in $B^-_{r \lambda}(u)$,}\ee for $\alpha= G(|p|)$ and $p <0, |p| \sim \bar \delta^{1/2} \lambda^{1+\gamma}, |q|= O(\bar\delta^{1/2}\lambda^\gamma),$
then
\be  \text{$u$ is $(\bar V, (r\lambda)^{2+\gamma^*}, \bar \delta)$ flat in $B_{r\lambda}$}\ee
with $\bar V=V^{\alpha,\beta}_{M,e_n, a,b} \in \mathcal V_{f_\pm, G}, \beta=|p|$. \end{prop}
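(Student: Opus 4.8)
The plan is to exploit the fact that at the scale $r\lambda$ the slope $p$ of the negative configuration, though small, is now \emph{large enough relative to the approximation error} because of the lower bound $\bar\delta^{1/2}r^\gamma \geq 2\bar\eta^\gamma\lambda^{1+\gamma}$. Concretely, $|p|\sim\bar\delta^{1/2}\lambda^{1+\gamma}$ while the $C^{2,\gamma}$ error of $u^-$ against $Q$ at scale $r\lambda$ is $\bar\delta^{1/2}(r\lambda)^{2+\gamma}$; dividing, the \emph{relative} size of the error compared to the slope times the radius is of order $(r\lambda)^{1+\gamma}/\lambda^{1+\gamma}\cdot(r\lambda)= r^{1+\gamma}\lambda$ — small. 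The first step is therefore to rescale: set $u_{r\lambda}(x)=u(r\lambda x)/(r\lambda)$ and record that $u^+_{r\lambda}$ is $(\tilde V,\eps,\bar\delta)$-flat with $\eps=r\lambda^{1+\gamma}$ for the natural rescaled one-phase polynomial, while $u^-_{r\lambda}$ is within $\bar\delta^{1/2}(r\lambda)^{1+\gamma}\eps/\alpha$-type error of the rescaled $Q$, which near the origin looks like $\beta\, x_n^-$ with $\beta=|p|$ up to lower order. The point is that after rescaling, $u_{r\lambda}$ is trapped between two translates of a genuine \emph{two-phase} polynomial $V^{\alpha,\beta}_{M,e_n,a,b}$ with $\beta=|p|$ positive, and the required flatness inequalities $|a|,|b'|,\|M\|\le\bar\delta\eps^{1/2}$, $|b_n|\le\bar\delta^2$, $|b_n|\|M\|\le\bar\delta^2\eps$ need to be checked directly from the bounds on $p,q,M$ and the relation $\bar\delta^{1/2}r^\gamma<2\lambda^{1+\gamma}$.

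Second, I would verify the inclusion $V^{\alpha,\beta}_{M,e_n,a,b}\in\mathcal V_{f_\pm,G}$ by choosing $a,b$ appropriately: $\alpha=G(\beta)$ holds by hypothesis since $\beta=|p|$; the linear coefficients $a,b$ are fixed by the three compatibility equations $2\alpha a\cdot e_n-\alpha\,\mathrm{tr}\,M=f_+(0)$, $2\beta b\cdot e_n-\beta\,\mathrm{tr}\,M=f_-(0)$, and $\alpha a\cdot\omega^\perp=\beta G'(\beta)b\cdot\omega^\perp$. The first equation is already built into $V\in\mathcal V_{f_+}$; the second determines $b_n$ (and here the smallness $|b_n|\le\bar\delta^2$ must be extracted — this uses $|f_-(0)|\le\bar\delta$ together with $\beta=|p|\sim\bar\delta^{1/2}\lambda^{1+\gamma}$, so $|b_n|\sim |f_-(0)|/\beta\sim\bar\delta^{1/2}/\lambda^{1+\gamma}$, which is \emph{not} automatically $\le\bar\delta^2$ — this is the delicate point, see below); the third, together with $a'=0$ coming from $V\in\mathcal V_{f_+}$, forces $b\cdot\omega^\perp=0$ for all $\omega^\perp$, i.e. $b'=0$, so the constraint on $|b'|$ is trivially met. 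Then $Q$ agrees with $-\beta(1+b\cdot x)P_{M,e_n}^-$ up to the controlled error coming from Remark \ref{Qsize} and the difference between the quadratic term $-\frac12(f_-(0)+p\,\mathrm{tr}\,M)x_n^2$ in $Q$ and the corresponding piece of the two-phase polynomial, all of which is $O(\bar\delta^{1/2}(r\lambda)^{2+\gamma})$.

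Third, with the two-phase polynomial identified, I would combine the one-phase flatness of $u^+$ and the $C^{2,\gamma}$ closeness of $u^-$ to produce the two-sided trapping $\bar V(x-(r\lambda)^{2+\gamma^*}e_n)\le u(x)\le \bar V(x+(r\lambda)^{2+\gamma^*}e_n)$ in $B_{r\lambda}$. On the positive side the hypothesis already gives trapping at rate $r^2\lambda^{2+\gamma}=(r\lambda)^{2+\gamma}r^{-\gamma}\lambda^{\gamma}$... more precisely $(r\lambda)^{2+\gamma}$ vs. the target $(r\lambda)^{2+\gamma^*}$; since $r\lambda\le\bar\lambda$ is small and $\gamma^*<\gamma$, one has $(r\lambda)^{2+\gamma}\le(r\lambda)^{2+\gamma^*}$, so the positive side is fine. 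On the negative side, translating $Q$ (equivalently $\bar V^-$) in the $e_n$ direction by $\tau$ changes it by $\approx\beta\tau$ near the free boundary and by $\approx(\beta+\|D Q\|)\tau$ in the bulk; to absorb the error $\bar\delta^{1/2}(r\lambda)^{2+\gamma}$ one needs $\beta\tau\gtrsim\bar\delta^{1/2}(r\lambda)^{2+\gamma}$, i.e. $\tau\gtrsim\bar\delta^{1/2}(r\lambda)^{2+\gamma}/\beta\sim(r\lambda)^{2+\gamma}/\lambda^{1+\gamma}=r^{2+\gamma}\lambda^{1+\gamma-(1+\gamma)}\cdot(r\lambda)=\dots$ — one checks this is $\le(r\lambda)^{2+\gamma^*}$ precisely because $\bar\delta^{1/2}r^\gamma\ge 2\bar\eta^\gamma\lambda^{1+\gamma}$, which says $\lambda^{1+\gamma}/\bar\delta^{1/2}\le r^\gamma/(2\bar\eta^\gamma)$, and then choosing $\gamma^*$ small enough (depending only on $\gamma$ and the universal $\bar\eta$, via the relation $r\le\bar\eta^{1/\gamma}$-type bounds implicit in the iteration) makes $(r\lambda)^{\gamma-\gamma^*}$ beat the loss. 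The main obstacle, as flagged above, is the bookkeeping of the small-but-not-negligible coefficient $b_n$: one must show the choice forced by $V\in\mathcal V_{f_\pm,G}$ still satisfies $|b_n|\le\bar\delta^2$ and $|b_n|\|M\|\le\bar\delta^2\eps$. This is where the precise relationship $\bar\delta^{1/2}r^\gamma\in[2\bar\eta^\gamma\lambda^{1+\gamma},2\lambda^{1+\gamma})$ is used crucially — it both bounds $\lambda^{1+\gamma}$ from above by (a constant times) $\bar\delta^{1/2}r^\gamma$ so that $\beta=|p|$ is not too small compared to $r\lambda$, and, from the other side, forces $\eps=r\lambda^{1+\gamma}$ to be comparable to $\bar\delta^{1/2}r^{1+\gamma}$, keeping the product $|b_n|\|M\|$ under control via $\|M\|\le\bar\delta\eps^{1/2}$. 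I expect the delicate part of the write-up to be precisely this interplay, together with fixing the final exponent $\gamma^*$ so that all the error terms convert to the clean rate $(r\lambda)^{2+\gamma^*}$; once those inequalities are laid out the rest is a routine translation-of-barrier argument.
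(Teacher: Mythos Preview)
Your proposal has the right overall shape, but contains a genuine error in the construction of $\bar V$. You argue that because the one-phase polynomial $V=V^{\alpha}_{M,e_n,a_n}$ has $a'=0$, the compatibility condition $\alpha\,a\cdot\omega^\perp=\beta G'(\beta)\,b\cdot\omega^\perp$ forces $b'=0$ in $\bar V$. This runs the logic backwards. The tangential part $b'$ is \emph{determined by $Q$}, not by the old $a$: one must take $b'=q'/|p|$ so that the negative part $\beta(1+b\cdot x)P_{M,e_n}$ matches $Q_{p,q,e_n,M}$ to the required accuracy. Then $a'$ in $\bar V$ is chosen (nonzero, of size $O(|q|)$) to satisfy the compatibility condition; the $a$ in $\bar V$ is \emph{not} the $a_n e_n$ of the original one-phase $V$. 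If you set $b'=0$ the $q$-term in $Q$ becomes an error of order $|q|(r\lambda)^2\sim\bar\delta^{1/2}\lambda^\gamma(r\lambda)^2$, and the trapping on the negative side would require this to be $\lesssim\beta(r\lambda)^{2+\gamma^*}\sim\bar\delta^{1/2}\lambda^{1+\gamma}(r\lambda)^{2+\gamma^*}$, i.e.\ $1\lesssim\lambda(r\lambda)^{\gamma^*}$, which is false. So the negative-side trapping fails with your choice.

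There is a second slip on the positive side. The one-phase flatness gives trapping at rate $r^2\lambda^{2+\gamma}$, not $(r\lambda)^{2+\gamma}$; your algebraic identity $r^2\lambda^{2+\gamma}=(r\lambda)^{2+\gamma}r^{-\gamma}\lambda^{\gamma}$ is incorrect, and the conclusion ``the positive side is fine because $\gamma^*<\gamma$'' does not follow. In fact $r^2\lambda^{2+\gamma}\le(r\lambda)^{2+\gamma^*}$ is equivalent to $r\ge\lambda^{(\gamma-\gamma^*)/\gamma^*}$, which holds only thanks to the \emph{lower} bound $\bar\delta^{1/2}r^\gamma\ge 2\bar\eta^\gamma\lambda^{1+\gamma}$ (giving $r\gtrsim\lambda^{(1+\gamma)/\gamma}$), and it is precisely this that forces the universal choice $\gamma^*<\gamma^2/(1+2\gamma)$. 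Conversely, the \emph{upper} bound $\bar\delta^{1/2}r^\gamma<2\lambda^{1+\gamma}$ is what makes the negative-side trapping and the size constraints on $b$ go through. So both ends of the hypothesis on $r$ are needed, and each governs a different side of the trapping---this is the bookkeeping you flagged as delicate, but your identification of which bound does which job is inverted.
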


\begin{proof} Let $\bar \lambda, \bar \delta$ be the constants in Proposition \ref{it} , with $\bar \lambda << \bar \delta$ to be made possibly smaller. Let $\gamma^*$ be given, to be specified later.

Call $$W_\beta:= \beta (1+b \cdot x) (x_n - \frac 1 2 x^T M x)$$
with
$$\beta=|p|, \quad b' = \frac{1}{|p|} q', \quad 2\beta b_n = \beta tr M + f_-(0).$$
Then it follows from \eqref{more_final} that
\be\label{more_final_2}|u - W_\beta| \leq C \bar \delta^{1/2} (r \lambda)^{2+\gamma}, \quad \text{in $B^-_{r \lambda}(u)$.}\ee
Hence,
\be\label{final_beta} W_\beta(x - (r\lambda)^{2+\gamma^*}e_n) \leq u \leq W_\beta(x + (r\lambda)^{2+\gamma^*}e_n)\quad \text{in $B^-_{r \lambda}(u)$,}\ee
as long as
\be\label{r1} r \leq C \lambda^{\frac{1+\gamma^*}{\gamma -\gamma^*}}. \ee
Moreover, call
$$\eps := (r\lambda)^{1+\gamma^*},$$ then 
\be (r\lambda)|b'| \leq \bar \delta^2 \quad (r\lambda)^2|b_n|\|M\| \leq \bar \delta^2 \eps \ee
as long as
\be\label{r2} r \leq C \lambda^{\frac{\gamma+\gamma^*}{1-\gamma^*}}. \ee

Now, let 
$$W_\alpha:=\alpha(1+ a \cdot x)(x_n - \frac 12 x^T M x)^+$$
with 
$$\alpha a':= \beta G'(\beta) b'.$$ Then, $\alpha a' = O(|q'|)$ and in view of \eqref{flat1tp_final} we get
\be W_\alpha(x - Cr^2\lambda^{2+\gamma}e_n) \leq u^+(x) \leq W_\alpha(x + Cr^2\lambda^{2+\gamma}e_n) \quad \text{in $B_{r\lambda}$} \ee and conclude that
\be\label{final_+} W_\alpha(x - (r\lambda)^{2+\gamma^*}e_n) \leq u^+(x) \leq W_\alpha(x + (r\lambda)^{2+\gamma^*}e_n) \quad \text{in $B_{r\lambda}$}\ee 
as long as 
\be\label{r3} r \geq C \lambda^{\frac{\gamma}{\gamma^*}-1}.\ee
Notice that all bounds on $r$ are satisfied as long as 
\be\label{gamma*}\gamma^* < \frac{\gamma^2}{1+2\gamma}\ee and $\bar \lambda$ is small enough.
Now the conclusion follows combining \eqref{final_beta} and \eqref{final_+}.
\end{proof}

\medskip

We conclude this section by exhibiting the proof of our main Theorem \ref{flatmain1}.

\smallskip

{\it Proof of Theorem $\ref{flatmain1}.$}
According to Lemma \ref{first_step_1}, after rescaling, we can assume that $u$ satisfies either the assumptions of Proposition \ref{first_step} or Proposition \ref{IF2tp} with $\lambda=\bar \lambda$ (say $0 \in F(u)$). In the latter case, we can apply Proposition \ref{IF2tp} indefinitely. 
If $u$ falls in the degenerate case of Proposition \ref{first_step}, then either we can iterate the conclusion $(i)$ indefinitely or we denote by $\lambda^*= \bar \eta^{k} \bar \lambda$ the first value for which $(ii)$ holds i.e, without loss of generality, 
 $$\text{$u^+$ is $(V,\bar \eta^2 {\lambda^*}^{2+\gamma}, \bar \delta)$ flat in $B_{\bar \eta \lambda^*},$} $$
for some $V=V^{\alpha, \beta}_{M,e_n, a, b} \in \mathcal V_{f_\pm, G}$
$$|u^- - Q_{p,q, e_n,  M}| \leq \bar \delta^{1/2} (\bar \eta \lambda^*)^{2+\gamma}, \quad \text{in $B^-_{\bar \eta \lambda^*}(u)$,}$$ for $\alpha= G(|p|)$ and $p<0, |p| \sim(\bar \delta^{1/2} {\lambda^*}^{1+\gamma}), |q| =O(\bar \delta^{1/2}{\lambda^*}^\gamma).$

We now follow under the assumptions of Proposition \ref{it} or possibly Proposition \ref{together}, with $r=\bar \eta$, $\lambda=\lambda^*$. We apply the conclusion of Proposition \ref{it} till the first $\bar r=\bar \eta^{m_0}$ (possibly $m_0=1$) for which $\bar \delta^{1/2} \bar r^\gamma \in [2\bar \eta^\gamma {\lambda^*}^{1+\gamma}, 2 {\lambda^*}^{1+\gamma}).$ Then we conclude by Proposition \ref{together} that 
\be  \text{$u$ is $(\bar V, (\bar r\lambda)^{2+\gamma^*}, \bar \delta)$ flat in $B_{\bar r\lambda}$}\ee
with $\bar V=V^{\alpha,\beta}_{M,e_n, a,b} \in \mathcal V_{f_\pm, G}, \beta=|p|$ and we can apply indefinitely Proposition \ref{IF2tp}. To guarantee the $C^{2,\gamma^*}$ improvement, we have to check that as $r$ decreases from $\bar \eta$ to $\bar r$ we have
$${\lambda^*}^{2+\gamma} r^2 \leq (\lambda^* r)^{2+\gamma^*}.$$
Thus we need, $$\bar r \geq (\lambda^*)^{\frac{\gamma-\gamma^*}{\gamma^*}},$$
which follows from the fact that (see \eqref{gamma*}), $$\gamma^* < \frac{\gamma^2}{1+2\gamma}.$$

\section{Appendix A} \label{appendixa}

In this short section we recall standard pointwise $C^{1,\alpha}$ estimates for solutions to elliptic equations in $C^{1,\alpha}$ domains (see for example \cite{MW} for further details.). We also presents a few variants which are needed in the previous section. 

Let $$\Omega: = \{x_n > g(x')\} \cap B_1 \quad \Gamma:= \{x_n=g(x')\} \cap B_1, \quad g(0)=0, \quad \nabla_{x'} g (0)=0$$ with $g \in C^{1,\alpha}$ and say \be |g| \leq |x|^{1+\alpha}.\ee Let $u$ be a bounded solution to 
\be\label{point} \begin{cases} 
\Delta u =f \quad \text{in $\Omega,$}\\
u=\varphi \quad \text{on $\Gamma$,}
\end{cases}\ee
for $f \in L^{\infty}(\Omega)$, $\varphi \in C^{1,\alpha}(\Gamma).$
\begin{thm}\label{pointwise}
Assume that
\be\label{var} |\varphi(x) - l(x')|  \leq |x|^{1+\alpha}\ee with $l$ a linear function.
If
\be \|l\|_{\infty}, \|u\|_{\infty}, \|f\|_\infty \leq 1 \quad \text{in $\Omega$},\ee
then $u$ is $C^{1,\alpha}$ at 0 and
\be
|\nabla u (0)| \leq C
\ee
with $C=C(n,\alpha).$
\end{thm}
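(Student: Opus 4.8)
The plan is to use the standard Campanato–type iteration at the boundary point $0$, tracking the $L^2$ (or $L^\infty$) decay of $u$ minus an affine function on shrinking balls $B_\rho \cap \Omega$. First I would flatten the boundary: since $g\in C^{1,\alpha}$ with $g(0)=0$, $\nabla_{x'}g(0)=0$, and $|g|\le|x|^{1+\alpha}$, the map $y=(x',x_n-g(x'))$ straightens $\Gamma$ to $\{y_n=0\}$ near $0$, transforms $\Delta$ into a uniformly elliptic operator $L = a_{ij}(y)\partial_{ij} + b_i(y)\partial_i$ with $\|a_{ij}-\delta_{ij}\|_{L^\infty(B_\rho)} \le C\rho^\alpha$ and $|b_i|\le C$, and changes $f$ into another bounded right-hand side. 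So without loss of generality we may assume $\Gamma=\{y_n=0\}\cap B_1$, $\Omega=\{y_n>0\}\cap B_1$, $u=\varphi$ on the flat piece with $|\varphi(y')-l(y')|\le C|y'|^{1+\alpha}$, and we may further subtract $l$ to assume $l\equiv 0$, so $|\varphi|\le C|y|^{1+\alpha}$ on $\Gamma$.

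Next I would set up the iteration. Let $\rho_k=\rho^k$ for a small $\rho$ to be fixed, and define the decaying quantities $\theta_k = \rho_k^{-(1+\alpha)}\,\|u - A_k\cdot y\|_{L^\infty(B_{\rho_k}\cap\Omega)}$ where $A_k$ is the optimal (almost-affine, vanishing at $0$) approximation. The inductive claim is $|A_{k+1}-A_k|\le C\rho_k^{\alpha}$ and $\theta_k\le C$ for all $k$, which immediately yields that $A_k\to\nabla u(0)$ with $|A_k|\le C(n,\alpha)$ and hence the conclusion. The one-step improvement is the heart of the matter: rescale $v(y)=\rho_k^{-(1+\alpha)}(u-A_k\cdot y)(\rho_k y)$ on $B_1\cap\{y_n>0\}$. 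Then $v$ solves $\tilde L_k v = \tilde f_k$ with $\|\tilde L_k - \Delta\| \le C\rho_k^\alpha$ (a small perturbation of the Laplacian for $k$ large, absorbed by shrinking $\rho$ or by a finite number of initial steps handled by ordinary interior/boundary $C^{1,\alpha}$ theory) and $\|\tilde f_k\|_\infty \le C\rho_k^{1-\alpha}\le C$; on the flat boundary $|v(y',0)|\le C\rho_k^{\alpha}|y'|^{1+\alpha}\le C\rho_k^\alpha$. By the De Giorgi–Nash–Moser / Schauder boundary estimate for $\Delta w=\tilde f_k$, $w=v$ on $\{y_n=0\}$, we have that $v$ is close in $B_{1/2}$ to a harmonic function $h$ vanishing on $\{y_n=0\}$, which near $0$ is within $C|y|^2$ of a linear function $\bar A\cdot y$ with $|\bar A|\le C\|v\|_{L^\infty(B_{1/2}\cap\Omega)}+C$; choosing $\rho$ small so that the quadratic error plus the perturbation error is $\le\frac12\rho^{1+\alpha}$ gives $\|v-\bar A\cdot y\|_{L^\infty(B_\rho\cap\Omega)}\le\frac12\rho^{1+\alpha}\cdot(\text{bound})$, which scales back to the improvement for $\theta_{k+1}$ with $A_{k+1}=A_k+\rho_k^{\alpha}\bar A$.

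The step I expect to be the main obstacle is the boundary approximation lemma feeding the iteration: one must show that a solution of $\Delta w=\tilde f$ in $B_1^+$ with $\|\tilde f\|_\infty\le 1$, $\|w\|_\infty\le C$, and $|w|\le\epsilon$ on $\{y_n=0\}$ is, in $B_{1/2}^+$, within $C(\epsilon + \|\tilde f\|_\infty)$ of a linear function vanishing on the hyperplane — with the linear coefficient controlled by $\|w\|_\infty$. This is where the genuine PDE input (boundary regularity up to $\{y_n=0\}$, odd reflection to kill the zero boundary data, then interior estimates) must be invoked, and where the constants must be tracked carefully so that the geometric factor beats the errors. Everything else — the change of variables, the absorption of lower-order and variable-coefficient terms as $\rho_k^\alpha$-perturbations, the summation $\sum_k|A_{k+1}-A_k|\lesssim\sum_k\rho^{k\alpha}<\infty$, and the translation of the $L^\infty$ bound on $A_k$ into the pointwise $C^{1,\alpha}$ statement — is routine. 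The variant stated (allowing $\varphi$ merely within $|x|^{1+\alpha}$ of a linear $l$ rather than being itself $C^{1,\alpha}$) costs nothing: it is exactly the form that survives the iteration, since at each scale one only sees the affine part plus an $|y|^{1+\alpha}$ remainder.
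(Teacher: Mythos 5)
The paper does not actually prove Theorem \ref{pointwise}; Appendix A records it as a standard pointwise $C^{1,\alpha}$ estimate and cites \cite{MW}, so there is no in-paper argument to compare your plan against. That said, your proposed route contains one genuine gap at the very first step. Flattening the boundary by $y=(x',x_n-g(x'))$ when $g$ is only $C^{1,\alpha}$ does \emph{not} produce a non-divergence operator $a_{ij}\partial_{ij}+b_i\partial_i$ with $|b_i|\le C$: carrying out the change of variables on $\Delta$ generates a first-order term of the form $-\big(\sum_{i<n}g_{ii}\big)\partial_{y_n}$, and $D^2g$ is neither classical nor in $L^\infty$ for a $C^{1,\alpha}$ function. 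This is precisely the reason the boundary pointwise $C^{1,\alpha}$ estimate in $C^{1,\alpha}$ domains is a nontrivial lemma rather than an immediate corollary of the flat case with frozen coefficients.

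Two standard repairs are available. Either do not flatten and use the hypothesis $|g|\le|x|^{1+\alpha}$ directly: at scale $\rho$ the domain $\Omega\cap B_\rho$ differs from the half-ball $B_\rho\cap\{x_n>0\}$ only in the slab $\{|x_n|\le\rho^{1+\alpha}\}$, and a barrier absorbs the contribution of this thin region into the Campanato error (this is also the geometry underlying the paper's variant, Theorem \ref{refine}). Or flatten but keep the equation in divergence form, $\operatorname{div}(A(y)\nabla v)=\tilde f$, where $A$ depends only on $\nabla g$ and is therefore $C^{0,\alpha}$, and then invoke the boundary $C^{1,\alpha}$ theory for divergence-form operators. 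With either fix, the rest of your scheme---rescaling, one-step comparison with a harmonic function vanishing on the hyperplane, and summing $\sum_k|A_{k+1}-A_k|\lesssim\sum_k\rho^{k\alpha}$---is the standard argument and is fine. One smaller imprecision: after subtracting $l$ and rescaling, the trace of $v$ on $\{y_n=0\}$ is bounded by $|y'|^{1+\alpha}$, not by a small constant $C\rho_k^\alpha$; what powers the improvement at the next scale is the superlinear vanishing of the boundary datum at the origin, not its global smallness.
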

It follows also that$$\p_i u (0)=\p_i l \quad i \neq n.$$

\begin{rem}\label{basic}It easy to see that if \eqref{var} is replaced by
\be \varphi(x) - l(x') \leq |x|^{1+\alpha} \quad (\text{rsp. $\geq |x|^{1+\alpha}$})\ee
then the following conclusion holds: $\p_n u(0)$ exists and 
$$ \p_n u(0) \leq C.$$ Indeed we can apply Theorem \ref{pointwise} to the function $v$ which solves problem \eqref{point} with $\varphi$ replaced by  $l(x') + |x|^{1+\alpha}.$ By the maximum principle $u \leq v$, and the conclusion follows. \end{rem}

We need the following refinement of this remark. Let $\varphi$ be defined in $B_1$.

\begin{thm}\label{refine}
Assume that \be\label{g} -|x|^{1+\alpha} \leq g \leq \sigma|x|^{1+\alpha}\ee for some small $\sigma>0.$ If
$\|u\|_\infty, \|f\|_{\infty} \leq 1,$ and $\varphi \in C^2$ satisfies
\be |\varphi(0)|, |\p_i \varphi(0)| \leq 1 \quad i \neq n\ee
\be -1 \leq \p_n\varphi(0) \leq \frac{1}{\sigma}\ee
\be |\p_{ij} \varphi | \leq 1 \quad \text{in $B_1$}, \quad (i,j)\neq (n,n)\ee and
\be |\p_{nn} \varphi| \leq \frac{1}{\|g\|_\infty}\ee
then $\p_n u(0)$ exists and 
$$\p_n u(0) \leq C.$$
\end{thm}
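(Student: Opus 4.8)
The plan is to reduce Theorem~\ref{refine} to the one–sided pointwise estimate of Remark~\ref{basic}. That remark yields $\partial_n u(0)\le C$ as soon as the boundary datum, transported onto $\Gamma$, lies below an affine function of $x'$ plus $|x|^{1+\alpha}$, with the affine function and the two $L^\infty$ norms normalized to $1$; so the whole content is to show that, \emph{even though} $\partial_n\varphi(0)$ may be as large as $1/\sigma$ and $\partial_{nn}\varphi$ as large as $1/\|g\|_\infty$, the trace of $\varphi$ on $\Gamma$ still admits such an affine majorant \emph{with universally bounded coefficients}. I expect this to be the only real obstacle, and it is conceptual rather than computational: once one sees that the hypotheses are tuned so that each potentially huge quantity is matched by a correspondingly small power of $g$, what remains is a short Taylor expansion.

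Concretely, first I would Taylor–expand $\varphi$ at the origin and evaluate at a boundary point $x=(x',g(x'))\in\Gamma\subset B_1$, writing
$$\varphi(x)=l(x')+\partial_n\varphi(0)\,g(x')+\tfrac12\,x^{T}D^2\varphi(\xi)\,x,\qquad l(x'):=\varphi(0)+\nabla_{x'}\varphi(0)\cdot x',$$
with $\xi$ on the segment $[0,x]\subset B_1$; then $\|l\|_{L^\infty(B_1)}\le 1+\sqrt{n-1}$ and $l(0)=\varphi(0)=u(0)$. For the ``normal'' linear term I would use $-1\le\partial_n\varphi(0)\le 1/\sigma$ together with $-|x|^{1+\alpha}\le g\le\sigma|x|^{1+\alpha}$ to get the one–sided bound $\partial_n\varphi(0)\,g(x')\le|x|^{1+\alpha}$ (the product over that box is maximized at the two corners $(1/\sigma,\sigma|x|^{1+\alpha})$ and $(-1,-|x|^{1+\alpha})$, both giving $|x|^{1+\alpha}$). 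For the quadratic remainder I would split $x^{T}D^2\varphi(\xi)x$ into the block $i,j\le n-1$, the mixed terms $i\le n-1,\ j=n$, and the pure $(n,n)$ term: the first two are $O(|x|^2)\le O(|x|^{1+\alpha})$ by $|\partial_{ij}\varphi|\le1$ and $|g(x')|\le|x|^{1+\alpha}$, while the delicate $(n,n)$ term is controlled by
$$|\partial_{nn}\varphi(\xi)|\,g(x')^2\le\frac{g(x')^2}{\|g\|_\infty}\le |g(x')|\le|x|^{1+\alpha},$$
i.e.\ the factor $1/\|g\|_\infty$ is absorbed by $g(x')^2\le\|g\|_\infty|g(x')|$. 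Summing, $\varphi(x)\le l(x')+C(n)\,|x|^{1+\alpha}$ on $\Gamma$.

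Finally I would divide $u$, $f$ and $\varphi$ by the universal constant $\max(1+\sqrt{n-1},\,C(n))$, so that $\|u\|_\infty,\|f\|_\infty,\|l\|_\infty\le1$ and the remainder constant becomes $1$; note that $\sigma<1$ keeps the domain condition $|g|\le|x|^{1+\alpha}$ valid. Remark~\ref{basic} then applies directly: the solution $v$ of the same problem with datum $l(x')+|x|^{1+\alpha}$ on $\Gamma$ is $C^{1,\alpha}$ at $0$ and lies above $u$ by the maximum principle (using $u(0)=v(0)$ and, in the usual local fashion, the $L^\infty$ bound on $u$ to compare on $\partial B_1\cap\Omega$), whence $\partial_n u(0)$ exists and $\partial_n u(0)\le C$ with $C=C(n,\alpha)$. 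The one bookkeeping point I would verify is that $|g(x')|\le|x|^{1+\alpha}$ forces $|x'|$ and $|(x',g(x'))|$ to be comparable, so that ``$|x|^{1+\alpha}$'' and ``$|x'|^{1+\alpha}$'' may be used interchangeably, after restricting if necessary to a fixed small ball about the origin and rescaling.
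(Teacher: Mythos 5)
Your proof is correct and follows essentially the same route as the paper: Taylor-expand $\varphi$ at $0$, take $l(x')=\varphi(0)+\sum_{i\neq n}\partial_i\varphi(0)x_i$, bound $\partial_n\varphi(0)\,g(x')$, the tangential/mixed Hessian terms, and the $(n,n)$ Hessian term by $C|x|^{1+\alpha}$, and then invoke Remark~\ref{basic}. The paper states this more tersely (in particular it leaves implicit the one-sided bound $\partial_n\varphi(0)\,g(x')\le |x|^{1+\alpha}$ coming from the matched asymmetries $-1\le\partial_n\varphi(0)\le 1/\sigma$ and $-|x|^{1+\alpha}\le g\le\sigma|x|^{1+\alpha}$, which you correctly spell out), but the decomposition and the key lemma used are identical.
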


To obtain this estimate it suffices to apply the Remark \ref{basic} with 
$$l(x')= \varphi(0)+ \sum_{i \neq n} \p_i \varphi(0) x_i.$$
Indeed
$$\varphi(x) - l(x') \leq \p_n\varphi(0) x_n + C |x|^2 +  \frac{1}{\|g\|_\infty}x_n^2 \leq C |x|^{1+\alpha}.$$

A similar statement holds when $g$ satisfies the inequality \be g \geq - \sigma |x|^{1+\alpha}.\ee

\section{Appendix B} \label{appendixB}

For the reader convenience we recall the technique of \cite{KNS} to 
transform a general (possibly nonlinear) two-phase free boundary problem into an elliptic system with coercive
boundary conditions. 

Let $u$ be a classical solution to a two-phase free boundary problem governed by a second order elliptic equation, say in $B_1$ with $0 \in F(u)$.
For $\sigma $ small, the
partial hodograph map 
\begin{equation*}
y^{\prime }=x^{\prime },\quad y_{n}=u^{+}(x)
\end{equation*}%
is $1-1$ from $\overline{B_{1}^{+}}(u)\cap B_{\sigma }\left( 0\right) $ onto
a neighborhood of the origin $U\subset \{y_{n}\geq 0\},$ and flattens $F(u)$
into a set $\Sigma \subset \{y_{n}=0\}.$ The inverse mapping is the partial
Legendre transformation%
\begin{equation*}
x^{\prime }=y^{\prime },\quad x_{n}=\psi (y),
\end{equation*}%
where $\psi $ satisfies $y_{n}=u^{+}\left( y^{\prime },\psi \left( y\right)
\right) ,$ $y\in U$. The free boundary is now the graph of $x_{n}=\psi \left(
y^{\prime },0\right) $. 

Concerning the negative part, let $C$ be a constant larger than  $\partial_{y_n}\psi$ in $U.$
Introduce the reflection map 
\begin{equation*}
x^{\prime }=y^{\prime },\quad x_{n}=\psi (y)-Cy_{n},
\end{equation*}%
which is $1-1$ from a neighborhood of the origin $U_{1}\subseteq U$ onto $%
\overline{B_{1}^{-}}(u)\cap B_{\sigma }\left( 0\right) $ (choosing $\sigma $
smaller, if necessary). Define in $U_{1}$ 
\begin{equation*}
\phi (y)=u^{-}(y^{\prime },\psi (y)-Cy_{n}).
\end{equation*}%

It is easily seen that the $x$ derivatives of $u^\pm$ are expressed in terms of the $y$ derivatives of $\psi$ and $\phi.$ Since $u$ is a solution to a two-phase problem, it follows that $\psi$ and $\phi$ solve in $U_{1}$ a nonlinear system of the type

\begin{equation}  \label{sis1}
\left\{ 
\begin{array}{l}
\mathcal{F}_{1}(D^2\psi, \nabla \psi, \psi, y)=0\\ 
\mathcal{F}_{2}(D^2\phi ,D^2\psi, \nabla \phi, \nabla \psi, \psi, y)=0%
\end{array}%
\right.
\end{equation}

Moreover  the free boundary conditions 
\begin{equation*}
u^{+}=u^{-}\quad \text{and}\quad |\nabla u^{+}|=G(|\nabla u^{-}|),\quad 
\text{\ on }F(u)
\end{equation*}
become (for an appropriate $\tilde G$)

\begin{equation}
\left\{ 
\begin{array}{l}
\phi (y^{\prime })=0 \quad \text{on $y_n=0,$}\\ 
\\ 
{\partial _{y_{n}}\psi}=\tilde G \left(
\partial _{y_{n}}\phi, \nabla _{y^{\prime }} \psi \right) \quad \text{on $y_n=0.$}
\end{array}%
\right.  \label{sis2}
\end{equation}%

Since $|\nabla u^{+}|>0$ on $F\left( u\right) $ and $G$ is strictly
increasing, the system (\ref{sis1}) is elliptic with coercive boundary
conditions (\ref{sis2}), under the natural choice of weights (see \cite{KNS}%
, p. 94, 95). 

In the particular case when the equation governing the problem is in divergence form, then \eqref{sis1} will also be in divergence form. On the other hand, if the system  (\ref{sis1})-(\ref{sis2})  has no special structure, then higher regularity  follows by classical results on elliptic-coercive systems in \cite{ADN, Morrey}, as long as 
$u$ is in $C^{2,\alpha}(\overline{B_{1}^{+}(u)})\cap C^{2,\alpha}(\overline{%
B_{1}^{-}(u)}).$



\begin{thebibliography}{9999}
\bibitem[ADN]{ADN} S. Agmon, A. Douglis, L. Nirenberg, {\it Estimates near the
boundary for solutions of elliptic partial differential equations satisfying
general boundary conditions. I.,} Comm. Pure Appl. Math. 12, 1959, 623--727.
\bibitem[B1]{B1} Batchelor G.K., {\it On steady laminar flow with closed streamlines at large Reynolds number}, J. Fluid.Mech. \textbf{1} (1956), 177--190.


\bibitem[DFS]{DFS} De Silva D., Ferrari F., Salsa S., \textit{Two-phase
problems with distributed source: regularity of the free boundary,} Anal.
PDE 7 (2014), no. 2, 267--310.

\bibitem[DFS2]{DFS2} De Silva D., Ferrari F., Salsa S.,
\textit{Perron's solutions for two-phase free boundary problems with distributed sources,} Nonlinear Anal. 121 (2015), 382--402.

\bibitem[DFS3]{DFS3} De Silva D., Ferrari F., Salsa S. \textit{Free boundary regularity for fully nonlinear non-homogeneous two-phase problems,} J. Math. Pures Appl. (9) 103 (2015), no. 3, 658--694.
\bibitem[E]{E} Engelstein M., {\it A two phase free boundary problem for the harmonic measure,}  arXiv:1409.4460.
\bibitem[EM]{EM} Elcrat A.R, Miller K.G., {\it Variational formulas on Lipschitz domains}, Trans. Amer. Math. Soc. \textbf{347} (1995), 2669--2678.
\bibitem[FL]{FL} Friedman A., Liu Y. {\it A free boundary problem arising in magnetoydrodynamic system}, Ann. Scuola Norm. Sup. Pisa, Cl. Sci. \textbf{22} (1994), 375--448.
\bibitem[KNS]{KNS} Kinderlehrer D., Nirenberg L., Spruck J., {\it Regularity in
elliptic free-boundary problems I,} Journal d'Analyse Mathematique, Vol 34
(1978) 86-118.
\bibitem[K]{K} Koch H., {\it Classical solutions to phase transition problems are smooth,} Comm. Partial Differential Equations 23 (1998), no. 3-4, 389--437.
\bibitem[KL]{KL} Kriventsov D., Lin F-H., {\it Regularity for shape optimizers: the non degenerate case}, arXiv:1609.02624. 
\bibitem[LW]{LW} Lederman C., Wolanski N.,  {\it A two phase elliptic singular perturbation problem with a forcing term,} J. Math. Pures Appl. (9) {\bf 86} (2006), no. 6, 552--589.
\bibitem[M]{Morrey}  Morrey C. B. Jr., {Multiple integrals in the calculus of variations,}
Reprint of the 1966 edition Classics in Mathematics. Springer-Verlag, Berlin, 2008. x+506 pp. 

\bibitem[MW]{MW}Ma, F., Wang, L.
{\it Boundary first order derivative estimates for fully nonlinear elliptic equations}.  
J. Differential Equations {\bf 252} (2012), no. 2, 988--1002. 


\end{thebibliography}
\end{document}